\documentclass[letterpaper,11pt]{article}

\usepackage[english]{babel}
\usepackage{fullpage}
\usepackage{amsmath,amssymb,amsthm,mathtools}
\usepackage{graphicx}
\usepackage{float}
\usepackage[footnotesize]{caption}
\usepackage[colorlinks=true,allcolors=blue]{hyperref}
\usepackage{doi}

\title{Cycle-Structure Generating Functions for Special Breakpoint Graphs}
\author{Max A. Alekseyev\thanks{The George Washington University, Washington, DC. Email: \href{mailto:maxal@gwu.edu}{maxal@gwu.edu}} \and Joseph T. Iosue\thanks{Joint Center for Quantum Information and Computer Science and Joint Quantum Institute, NIST/University of Maryland, College Park, MD, 20742, USA} \thanks{Present Address: Microsoft Quantum, Redmond, WA 98052, USA} \and Adam Ehrenberg$^\dagger$ \and Alexey V. Gorshkov$^\dagger$}
\date{}

\hypersetup{
  pdftitle={Cycle-Structure Generating Functions for Special Breakpoint Graphs},
  pdfauthor={Max A. Alekseyev and Joseph T. Iosue and Adam Ehrenberg and Alexey V. Gorshkov}
}

\newcommand{\G}{{\cal G}}
\newcommand{\C}{{\cal C}}
\newcommand{\coef}[1]{\mathop{[\![#1]\!]}}
\newtheorem{thm}{Theorem}
\newtheorem{lemma}[thm]{Lemma}
\newtheorem{corol}[thm]{Corollary}
\newtheorem{prop}[thm]{Proposition}
\theoremstyle{remark}
\newtheorem*{remark}{Remark}
\theoremstyle{plain}

\begin{document}
\maketitle

\begin{abstract}
Breakpoint graphs originate in comparative genomics, where their alternating
cycles encode relationships between genomes.  We study a constrained class of
three-colored breakpoint graphs associated with permutations and develop
cycle-refined generating functions for two extremal families.  These families
have a natural topological interpretation: their canonical surfaces are,
respectively, the sphere and the projective plane.  The spherical family is
characterized by noncrossing configurations, while the projective-plane
family admits a different decomposition involving a distinguished family of
M\"obius ladders.

The resulting generating-function equations retain the full cycle structure
but nevertheless admit substantial reductions.  This leads to explicit
Catalan-weighted evaluations, polynomiality results for refined cycle
statistics, and a connection between a natural diagonal specialization and
noncrossing trees.  The two topological families exhibit markedly different
combinatorial mechanisms, providing complementary examples of how local
transformations of breakpoint graphs can control refined permutation
enumerations.

As a further application, the same Catalan-weighted sums arise in asymptotic
unitary-Weingarten expansions for entanglement of random Gaussian states in
linear optics.  The combinatorial results determine the leading and
constant-order moment polynomials entering the R\'enyi entropy expansion,
with the projective-plane contribution giving the finite-size constant
correction.
\end{abstract}

\section{Introduction}\label{sec:intro}
A breakpoint graph is the union of three perfect matchings on the same vertex set. Every pair of colors decomposes into alternating cycles, and the interaction among the three cycle systems records rich combinatorial information. Breakpoint graphs originated in comparative genomics, where their cycle structure encodes relationships between genome arrangements and underlies rearrangement-distance statistics. Multivariate enumeration of breakpoint-graph cycle structures, including generalized Hultman numbers, was developed in \cite{Alexeev2017hultman}; related noncrossing and topological structures appear in \cite{Alekseyev2007,Alexeev2016top}.

Here we study a more constrained class, which we call \emph{special breakpoint graphs}. Throughout the paper we call the three matchings, and their edges, \emph{black}, \emph{gray}, and \emph{red}; the formal definition is given in Section~\ref{sec:breakpoint}. Two of the matchings form one rooted alternating cycle, while the third matching obeys a parity condition around that cycle. Equivalently, these graphs encode permutations in $S_{2n}$ while simultaneously recording the permutation-cycle decomposition and an additional alternating-cycle statistic. Our goal is to develop the intrinsic combinatorics of this class: structural characterizations, local graph transformations, and multivariate generating functions that retain the complete black-gray cycle structure.

The combinatorial parameter organizing the paper is
\begin{equation}\label{eq:kappa_intro}
\kappa:=\bigl(\#\text{black-gray cycles}\bigr)+\bigl(\#\text{gray-red cycles}\bigr)-2n.
\end{equation}
A canonical surface construction in Section~\ref{sec:breakpoint} shows that $\kappa+1$ is the Euler characteristic of a closed connected surface carried by the three-colored graph. Since every closed connected surface has Euler characteristic at most $2$, necessarily $\kappa\leq1$. The maximal stratum $\kappa=1$ is spherical, while the next stratum $\kappa=0$ is carried by the projective plane. This topological distinction anticipates their different recursive structures and is the reason we focus on these two strata. For $\kappa=1$, the graphs are exactly the noncrossing special breakpoint graphs; cutting at the root produces an ordered triple of smaller noncrossing graphs and leads to a nonlinear differential equation for the multivariate generating function. For $\kappa=0$, every graph either contains a black-gray $1$-cycle or belongs to an exceptional odd M\"obius-ladder family; removing a rooted $1$-cycle gives a different recursive generating-function equation.

A second motivation comes from random Gaussian-state entanglement in linear optics. Page curves for Haar-random passive linear-optical transformations can be expressed through unitary-Weingarten permutation sums~\cite{Iosue2023,Youm2025}. In the extremal powers relevant to the first two orders of the large-system expansion, those sums are precisely signed Catalan-weighted special-breakpoint-graph sums in the strata $\kappa=1$ and $\kappa=0$. This is one reason the two topological strata and the Catalan weight sequence are singled out here. The mathematical development below is independent of this application. We return to the physical dictionary only in Section~\ref{sec:physics_application}, where the Catalan evaluations proved earlier are translated into the entanglement problem and compared with recent related work~\cite{AltPageCurves}, which carries the corresponding series to closed form by complementary methods.

The primary output of the paper is therefore mathematical: multivariate recurrences, structural descriptions, reduction mechanisms, and intrinsic weighted evaluations for the spherical and projective-plane classes. Section~\ref{sec:breakpoint} defines special breakpoint graphs and their cycle-structure generating functions and proves the basic lemmas used in both strata. Section~\ref{sec:kappa=1} then treats the spherical class as a single block: noncrossing structure, the recursive PDE, the universal shift-flow reduction, and the signed Catalan evaluation. Section~\ref{sec:kappa=0} treats the projective-plane class in the same way: M\"obius ladders and root removal, the recursive PDE and its integral form, the $\mathfrak{sl}_2$ reduction, the Catalan orbit and diagonal shift-flow, and finally the coefficient formulas at fixed gray-red cycle number and fixed permutation cycle type. Section~\ref{sec:physics_application} translates selected Catalan evaluations proved earlier in Sections~\ref{sec:kappa=1} and~\ref{sec:kappa=0} into the Gaussian-state entanglement problem, without adding new graph-theoretic arguments. Section~\ref{sec:extensions} summarizes the main conclusions and discusses further directions.

\begin{figure}[!t]
    \centering
    \includegraphics[width=.75\linewidth]{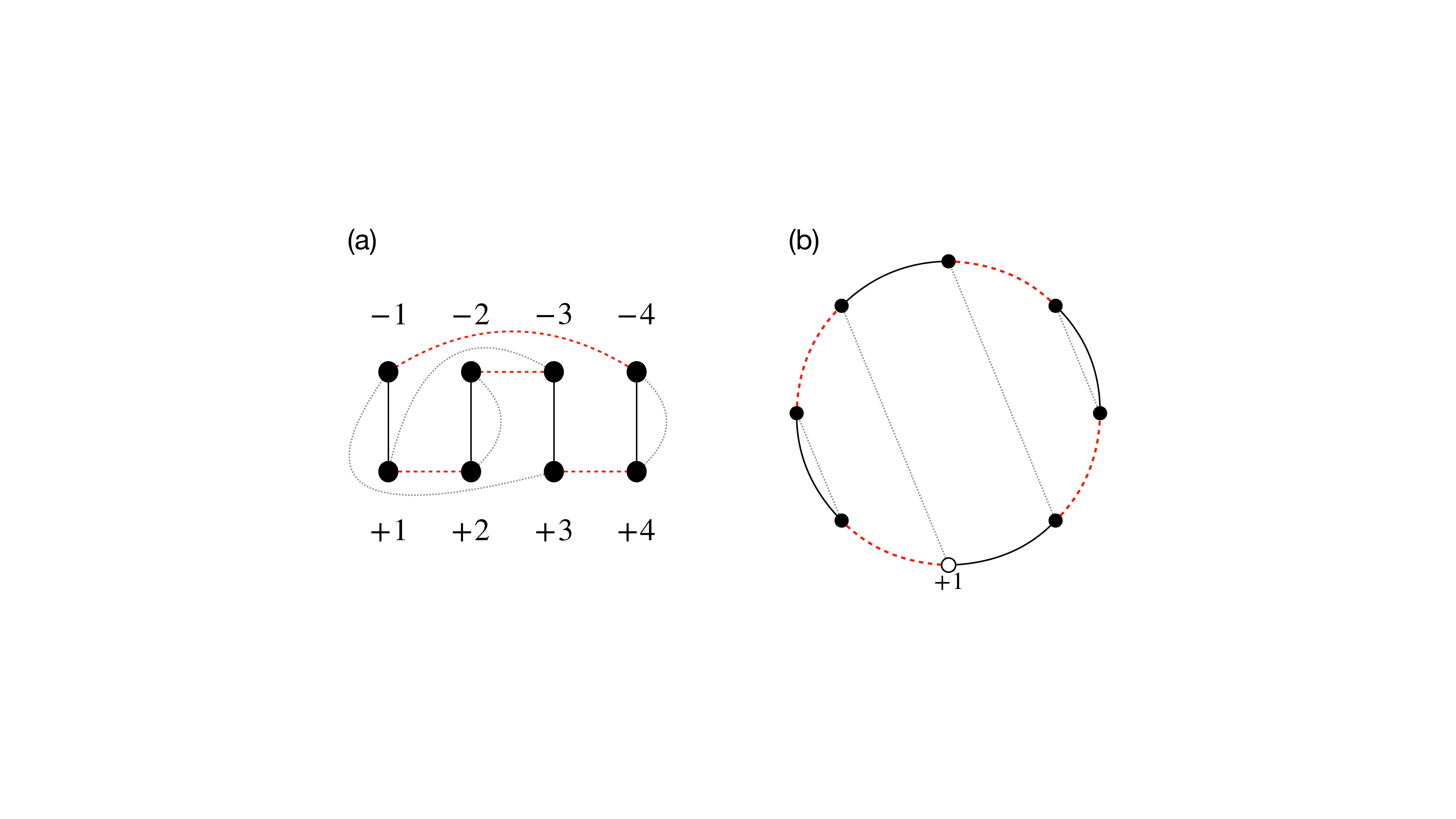}
    \caption{An example of graph representations of a permutation $\tau=(3,2,1,4)$ with $n=2$.  \textbf{(a)} The corresponding special breakpoint graph. \textbf{(b)} The same graph with the black-red cycle drawn as a circle and gray edges drawn as chords.  The marked vertex $+1$ is the root and determines the remaining labels.  This example is noncrossing.}
    \label{fig:correspondence}
\end{figure}

\section{Special breakpoint graphs and cycle-structure generating functions}\label{sec:breakpoint}

We rely on graphs formed by three perfect matchings, traditionally studied in comparative genomics under the name \emph{breakpoint graphs}~\cite{Alexeev2017hultman,Alekseyev2007}. We find it convenient to refer to the three matchings and their edges as colored \emph{black}, \emph{gray}, and \emph{red}. So, a breakpoint graph is a graph formed by $m$ black, $m$ gray, and $m$ red edges on the same $2m$ labeled vertices, where the edges of each color form a perfect matching. It is clear that the edges of every pair of colors form a collection of cycles, along which the colors of edges alternate. We will refer to such cycles as \emph{black-gray}, \emph{black-red}, and \emph{gray-red}, respectively. In computational biology, a breakpoint graph describes the relationship between two (circular) genomes on the same set of genes, where (say) red edges encode genes, black-red cycles represent chromosomes (as sequences of genes) in one genome, and gray-red cycles represent chromosomes in the other genome. The number of black-gray cycles is a central statistic in rearrangement-distance formulas.

The cycle structures of the breakpoint graphs with a single black-red cycle were enumerated in~\cite{Alexeev2017hultman} by means of multivariate generating functions, which further enabled a few applications (e.g., uniform sampling of genomes at a fixed distance) and revealed a connection to known combinatorial objects such as exponential Bell polynomials. In the present work, we consider \emph{special breakpoint graphs} on $4n$ vertices. For a permutation $\tau\in S_{2n}$, written in one-line notation unless stated otherwise, label the vertices by $\pm i$ for $i\in\{1,2,\dots,2n\}$; black edges connect $-i$ with $+i$, gray edges connect $-i$ with $+\tau(i)$, and red edges connect $(-1)^{i-1}i$ with $(-1)^{i-1}((i\bmod 2n)+1)$ (Fig.~\ref{fig:correspondence}a). Since the labeling of vertices in a special breakpoint graph is uniquely defined by the label of a single vertex (root), say $+1$, we will not label vertices but assume that the graph is rooted (Fig.~\ref{fig:correspondence}b).

The three cycle systems also give a canonical topological realization of a special breakpoint graph.  This interpretation will explain why $\kappa=1$ and $\kappa=0$ are the first two strata.

\begin{prop}[Canonical surface]\label{prop:canonical_surface}
Let $G$ be a special breakpoint graph of order $4n$, and let $c_{BG}$, $c_{GR}$, and $c_{BR}$ denote its numbers of black-gray, gray-red, and black-red cycles.  Then $G$ canonically determines a closed connected surface $\Sigma_G$ (not necessarily orientable) with a cellular embedding of $G$ whose faces are precisely the bicolored cycles.  Its Euler characteristic is
\begin{equation}\label{eq:kappa_euler_characteristic}
\chi(\Sigma_G)=c_{BG}+c_{GR}+c_{BR}-2n=\kappa(G)+1.
\end{equation}
Consequently $\kappa(G)\leq1$.  Moreover,
\[
\kappa(G)=1\quad\Longleftrightarrow\quad \Sigma_G\cong S^2,
\qquad
\kappa(G)=0\quad\Longleftrightarrow\quad \Sigma_G\cong\mathbb{RP}^2.
\]
\end{prop}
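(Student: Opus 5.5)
The plan is the standard construction of a 2-complex from a 3-edge-colored cubic graph. Start from $G$ itself as a 1-complex. It has $4n$ vertices and $3\cdot 2n=6n$ edges, since each of the three perfect matchings on $4n$ vertices has $2n$ edges. For every bicolored alternating cycle (black-gray, gray-red, or black-red) glue a closed 2-cell along that cycle. The resulting 2-complex $\Sigma_G$ is canonical, because it depends only on the colored graph and involves no choices.

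First we check that $\Sigma_G$ is a closed surface with $G$ cellularly embedded.
\begin{itemize}
\item Each edge, say a black one, lies in exactly one black-gray cycle and exactly one black-red cycle. So every point interior to an edge has a disk neighborhood formed by two half-disks.
\item At a vertex $v$ with incident edges $b,g,r$, the three faces containing $v$ are the black-gray, gray-red, and black-red cycles through $v$. Their corners at $v$ meet cyclically along $b,g,r$, forming a disk link (a triangle). So $v$ is a manifold point.
\item A subtle point: a bicolored cycle might pass through $v$ more than once. This cannot happen, since each vertex has exactly one edge of each color and hence lies on exactly one cycle of each two-color type.
\end{itemize}
Therefore $\Sigma_G$ is a closed surface whose faces are exactly the bicolored cycles. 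It is connected because $G$ is connected: the red and black edges alone already form a single black-red cycle, as noted in the definition, so $c_{BR}=1$.

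Next compute the Euler characteristic:
\[
\chi(\Sigma_G)=V-E+F=4n-6n+(c_{BG}+c_{GR}+c_{BR})=c_{BG}+c_{GR}+c_{BR}-2n.
\]
We verify $c_{BR}=1$ from the definition. Black edges join $-i$ to $+i$. Red edges join $+i$ to $+(i+1)$ for odd $i$ and $-i$ to $-(i+1)$ for even $i$, with indices mod $2n$. Tracing $+1\to+2\to-2\to-3\to+3\to+4\to\cdots$ visits every vertex before closing, so there is one black-red cycle. Hence $\chi=c_{BG}+c_{GR}+1-2n=\kappa+1$, using the definition \eqref{eq:kappa_intro}.

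Finally, the classification of closed surfaces gives $\chi\le 2$, so $\kappa\le 1$. Moreover $\chi=2$ holds only for $S^2$. The value $\chi=1$ holds only for $\mathbb{RP}^2$, because orientable surfaces have even Euler characteristic $2-2g$ and nonorientable ones have $\chi=2-k$ with $k\ge1$ crosscaps. This yields both equivalences.

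The main obstacle is the local manifold check at vertices, and in particular making sure the gluing produces disk links rather than pinched points. Once we observe that each vertex lies on exactly one cycle of each color pair, the link at every vertex is a triangle and the argument is routine. We should not assert orientability. It holds only when the graph is bipartite in a compatible way, and here nonorientable surfaces such as $\mathbb{RP}^2$ genuinely occur.
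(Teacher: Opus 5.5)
Your proof is correct and is essentially the paper's argument in Poincar\'e-dual form. The paper glues one colored triangle per vertex of $G$, so the bicolored cycles become the vertices of a triangulation with $4n$ faces and $6n$ edges; you instead attach a $2$-cell to $G$ along each bicolored cycle, which gives the same surface and the same count $V-E+F=4n-6n+(c_{BG}+c_{GR}+c_{BR})$, and makes the cellular embedding of $G$ with faces equal to the bicolored cycles immediate (your triangular vertex link is the dual of the paper's circular-link check).
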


\begin{proof}
For each vertex of $G$, take a triangle whose three sides are labeled black, gray, and red.  Label each corner by the pair of colors of its incident sides.  If two vertices of $G$ are joined by an edge of a given color, glue the correspondingly colored sides of their triangles, matching corners with the same two-color label.  Every triangle side is paired exactly once, so the resulting two-dimensional cell complex has no boundary.  The link of a vertex labeled, for example, $BG$ traces one black-gray alternating cycle of $G$; hence every vertex link is a circle.  Thus the complex is a closed surface.  It is connected because the black-red matching pair already forms one cycle through all vertices of $G$.  Placing each vertex of $G$ at the center of its triangle and each colored edge across the corresponding glued side realizes $G$ as the dual cellular graph; the dual face around a triangulation vertex is exactly the associated bicolored cycle.

In this triangulation there are $4n$ triangular faces and $6n$ edges. Its vertices are in bijection with the three families of bicolored cycles, so their number is $c_{BG}+c_{GR}+c_{BR}$.  Therefore
\[
\chi(\Sigma_G)
=(c_{BG}+c_{GR}+c_{BR})-6n+4n.
\]
For a special breakpoint graph $c_{BR}=1$, and~\eqref{eq:kappa_euler_characteristic} follows.  A closed connected surface has Euler characteristic at most $2$; equality characterizes the sphere, while Euler characteristic $1$ characterizes the projective plane.  This proves the remaining assertions.
\end{proof}

The construction is already visible in the smallest spherical example.  For
$n=1$ and the identity permutation $\tau=(1,2)$, the gray matching coincides
with the black matching, so $c_{BG}=2$ and $c_{GR}=c_{BR}=1$.  Thus
$\chi(\Sigma_G)=2$.  Figure~\ref{fig:canonical_surface_example} shows the
passage from the special breakpoint graph to the corresponding four-face
triangulation and then to its realization on the sphere.

\begin{figure}[H]
    \centering
    \includegraphics[width=.98\linewidth]{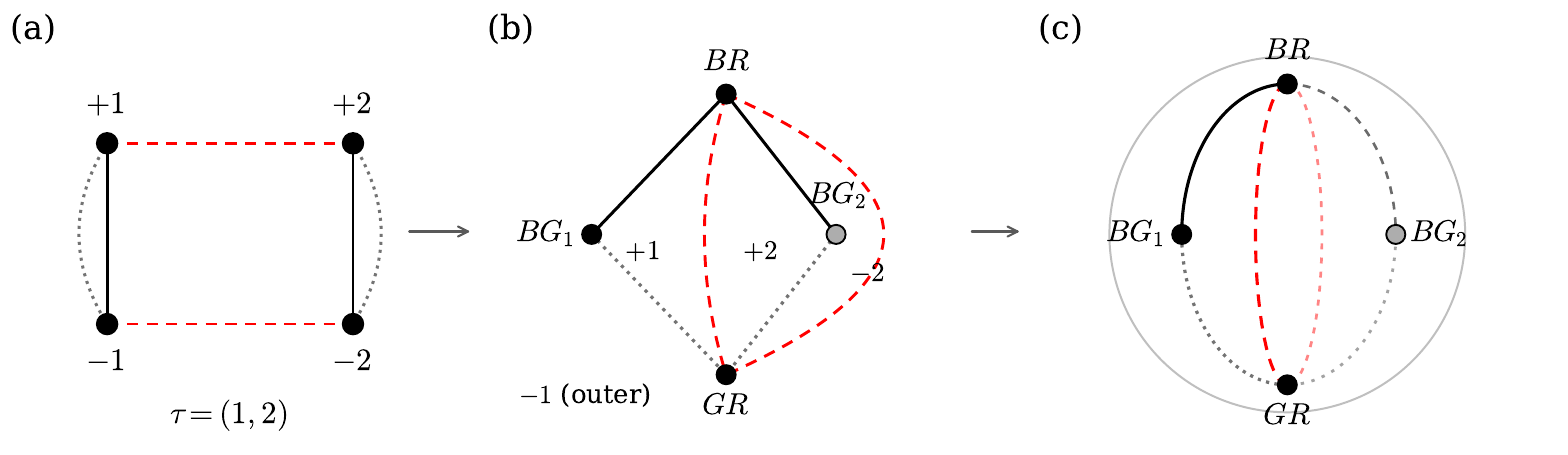}
    \caption{The canonical-surface construction in the smallest spherical
    example.  \textbf{(a)} The special breakpoint graph for $n=1$ and
    $\tau=(1,2)$.  The gray matching coincides with the black matching and is
    drawn by offset dotted arcs so that both colors remain visible.
    \textbf{(b)} The $1$-skeleton of the associated four-face triangulation,
    drawn as a planar map; the region marked $-1$ is the outer face.  Its
    vertices $BG_1,BG_2,BR,GR$ correspond to the bicolored cycles, while its
    four triangular faces $+1,+2,-1,-2$ correspond to the four vertices of
    the breakpoint graph.  In particular, each red side joins $BR$ to $GR$.
    \textbf{(c)} The same triangulation realized on $S^2$.  All colored arcs
    lie on the sphere; dark arcs lie on the visible hemisphere and lighter
    broken arcs on the back hemisphere.  Hence the two red arcs meet the
    closed black/gray curve $BR-BG_1-GR-BG_2-BR$ only at $BR$ and $GR$.
    The rear vertex $BG_2$ is shaded gray.}
    \label{fig:canonical_surface_example}
\end{figure}

\begin{remark}[Polygon-gluing formulation]
The canonical surface can equivalently be viewed through the polygon-gluing language used for breakpoint graphs~\cite{Alekseyev2007,Alexeev2016top}.  Cutting the colored surface along one matching turns the relevant bicolored cycles into polygonal boundary components, and the third matching prescribes how their sides are identified.  Conversely, the alternating cycles created by those identifications are precisely the cycle data entering the Euler characteristic.  Thus the polygon-gluing and colored-triangulation descriptions are two cellulations of the same matching-to-surface mechanism; the latter is convenient here because it treats all three colors symmetrically and gives $\kappa$ an intrinsic topological meaning before any generating-function specialization.
\end{remark}

\begin{figure}[H]
    \centering
    \includegraphics[width=\linewidth]{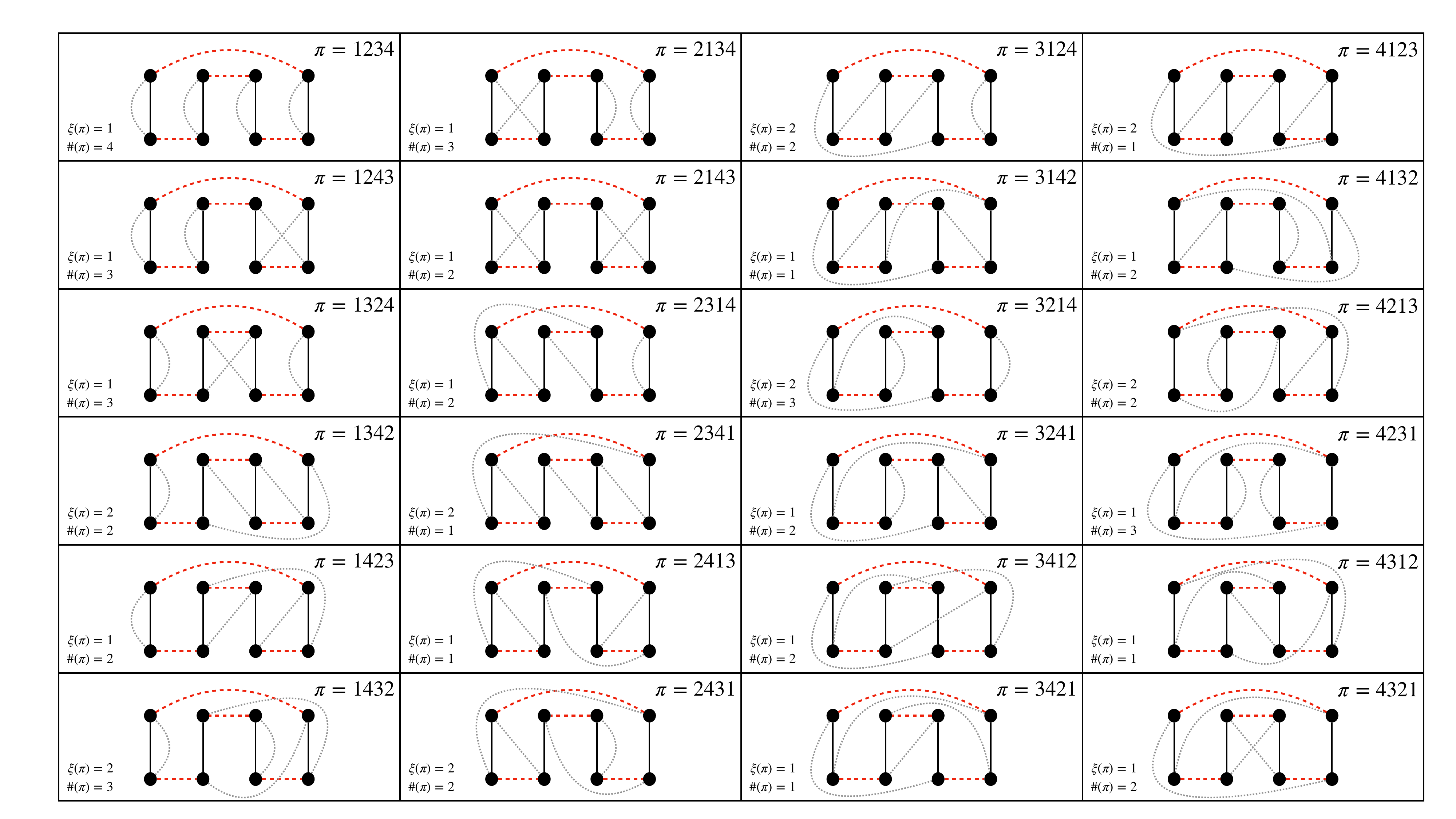}
    \caption{Example for $n=2$ (so $2n=4$). 
    The 24 diagrams are the breakpoint graphs associated with the permutations $\pi \in S_{4}$ (here written in one-line notation). $\xi(\pi)$ denotes the number of gray-red cycles. $\#(\pi)$, the number of cycles of the permutation, is the number of black-gray cycles.}
    \label{fig:example}
\end{figure}

\begin{remark}[Permutation-length form]
Let $\tau\in S_{2n}$ be the permutation encoded by a special breakpoint graph, let $\operatorname{cyc}(\tau)$ denote its set of cycles, let $\#(\tau):=|\operatorname{cyc}(\tau)|$, and define its transposition length by
\[
|\tau|:=2n-\#(\tau).
\]
Let $\xi(\tau)$ denote the number of gray-red cycles. Since the black-gray cycles are exactly the cycles of $\tau$, we have $c_{BG}=\#(\tau)$ and $c_{GR}=\xi(\tau)$, so the stratum parameter has the equivalent form
\begin{equation}\label{eq:kappa_transposition_length}
\kappa(\tau)=\xi(\tau)-|\tau|.
\end{equation}
Thus the spherical and projective-plane strata may also be written as
\[
\xi(\tau)=|\tau|+1\qquad(\kappa=1),
\qquad
\xi(\tau)=|\tau|\qquad(\kappa=0).
\]
Figure~\ref{fig:example} illustrates this dictionary for all permutations in $S_4$, displaying $\xi(\pi)$ and $\#(\pi)$ alongside their breakpoint graphs. This permutation-theoretic form is particularly convenient for the fixed-support arguments later in Section~\ref{sec:kappa=0}, while Proposition~\ref{prop:canonical_surface} gives the same parameter its topological meaning.
\end{remark}

The canonical-surface bound makes $\kappa=1$ and $\kappa=0$ the top two strata. We treat them separately, because their topology leads to different recursive decompositions, in Theorems~\ref{th:G1_PDE} and~\ref{th:G0_PDE}, respectively.

For nonnegative integers $n,c_1,c_2,\dots$, we define $g_\kappa(n,c_1,c_2,\dots)$ as the number of graphs formed by three perfect matchings (black, gray, red) on $4n$ unlabeled vertices, one of which is marked as the root, such that
\begin{itemize}
    \item black and red edges form a single cycle.  Starting at the root, labeled $+1$, label the vertices by $\pm i$ for $1\le i\le 2n$ so that black edges are $\{-i,+i\}$ and red edges join $(-1)^{i-1}i$ to $(-1)^{i-1}((i\bmod 2n)+1)$;
    \item endpoints of each gray edge have labels of different signs;\footnote{Equivalently, if we consecutively label the red edges along the black-red cycle with numbers $1,2,\dots, 2n$, then each gray edge connects an odd-numbered red edge to an even-numbered one.}
    \item the number of black-gray \emph{$i$-cycles} (i.e., with $i$ black and $i$ gray edges) is $c_i$;
    \item the total number of gray-red and black-gray cycles is $2n+\kappa$, i.e., the number of gray-red cycles is 
    $$2n + \kappa - c_1 - c_2 - \dots.$$
\end{itemize}
We further define the following cycle-structure generating functions:
\[
\begin{split}
\G_{\kappa}(x,u,s_1,s_2,\dots) &:= \mathbf 1_{\{\kappa=1\}}u + \sum_{n\geq 1} \frac{x^n}{2n} \sum_{c_1 + 2c_2 + \dots = 2n} g_\kappa(n,c_1,c_2,\dots)  u^{2n+\kappa - c_1 - c_2 - \dots}  s_1^{c_1} s_2^{c_2} \cdots \\
&= \mathbf 1_{\{\kappa=1\}}u + u^{\kappa} \sum_{n\geq 1} \frac{(xu^2)^n}{2n} \sum_{c_1 + 2c_2 + \dots = 2n} g_\kappa(n,c_1,c_2,\dots)  \left(\frac{s_1}u\right)^{c_1} \left(\frac{s_2}u\right)^{c_2} \cdots .
\end{split}
\]

The extra term $u$ for $\kappa=1$ represents the empty graph; it is convenient for the recursive decomposition in Section~\ref{sec:kappa=1} and does not affect any positive-degree coefficient. Thus $x$ marks half the permutation size, $u$ marks gray-red cycles, and $s_i$ marks black-gray $i$-cycles. For a formal power series $F$, we write $\coef{z^m}F$ for the coefficient of $z^m$ in $F$.

We use comma subscripts for partial derivatives throughout; for example, $\G_{\kappa,x}:=\partial \G_\kappa/\partial x$ and $\G_{\kappa,s_i}:=\partial \G_\kappa/\partial s_i$. Lowercase letters used as vertex labels in local constructions and figures are unrelated to the generating variables $u,s_1,s_2,\ldots$.

\begin{remark}[Rooted-size normalization]
The factor $1/(2n)$ in the definition of $\G_\kappa$ is chosen to match the rooted decompositions used below. Coefficientwise it is removed by differentiation:
\[
2\frac{\partial}{\partial x}\left(\frac{x^n}{2n}\right)=x^{n-1}.
\]
This is why the $\kappa=1$ decomposition naturally produces $2\G_{1,x}$, and the same normalization is retained for $\kappa=0$ so that the two series use a common convention.
\end{remark}

\subsection{Rooted cycle selection and a switching lemma}

For nonnegative integers $n,c_1,c_2,\dots$ and an integer $i\geq 1$, let $g^{(i)}_\kappa(n,c_1,c_2,\dots)$ be the number of special breakpoint graphs of order $4n$ with black-gray cycle structure $(c_1,c_2,\dots)$ such that the root belongs to a black-gray $i$-cycle. We will find it convenient to define generating functions:
$$
\G^{(i)}_{\kappa}(x,u,s_1,s_2,\dots) := \sum_{n\geq 1} x^n \sum_{c_1 + 2c_2 + \dots = 2n} g^{(i)}_\kappa(n,c_1,c_2,\dots)  u^{2n+\kappa - c_1 - c_2 - \dots}  s_1^{c_1} s_2^{c_2} \cdots.
$$

We will need the following lemmas:

\begin{lemma}\label{lem:gfi}
For any $\kappa\in\{0,1\}$ and any integers $n\geq 1$ and $i\geq 1$,
$$
g^{(i)}_\kappa(n,c_1,c_2,\dots) = \frac{ic_i}{2n} g_\kappa(n,c_1,c_2,\dots),
$$
and correspondingly
$$
\G^{(i)}_{\kappa}(x,u,s_1,s_2,\dots) = i s_i \frac{\partial}{\partial s_i} \G_{\kappa}(x,u,s_1,s_2,\dots).
$$
\end{lemma}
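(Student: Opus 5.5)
We will prove the counting identity by double counting pairs (graph, root position) on unrooted objects, and then read off the generating-function identity coefficientwise.

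Consider the set of special breakpoint graphs of order $4n$ with a fixed cycle structure $(c_1,c_2,\dots)$ and fixed stratum $\kappa$, but with the root forgotten. The root is determined by a choice of vertex labeled $+1$. The admissible root positions are the $2n$ vertices that would receive a $+$ label. Re-rooting at $+k$ must again satisfy the red-edge labeling convention, so the admissible positions are exactly the positive-labeled vertices.

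Re-rooting preserves all data counted by $g_\kappa$:
\begin{itemize}
\item the black-red cycle is unchanged;
\item the sign condition on gray edges is preserved, since re-rooting at a positive vertex preserves the sign classes;
\item the black-gray cycle structure and the number of gray-red cycles are unchanged, since they do not depend on labels.
\end{itemize}
Concretely, re-rooting at $+k$ amounts to conjugating $\tau$ by a rotation $i\mapsto i-k+1 \pmod{2n}$ and also reflecting when $k$ is even. This should be checked carefully against the red-edge rule; it is the only mildly delicate step.

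Now count pairs $(G,v)$, where $G$ is a rooted graph counted by $g_\kappa(n,c_1,c_2,\dots)$ and $v$ is one of the $2n$ positive vertices lying on a black-gray $i$-cycle. Each black-gray $i$-cycle contains $i$ black edges, and each black edge $\{-j,+j\}$ has exactly one positive endpoint. Hence every $i$-cycle contains exactly $i$ positive vertices, and the number of pairs is $i c_i\, g_\kappa$.

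Alternatively, re-root $G$ at $v$. The map $(G,v)\mapsto (G \text{ re-rooted at } v,\ \text{old root})$ is a bijection onto pairs $(G',w)$, where $G'$ is counted by $g^{(i)}_\kappa$ and $w$ ranges over the $2n$ positive vertices of $G'$. This map is invertible by re-rooting back. So the same count equals $2n\, g^{(i)}_\kappa$, which gives the stated formula.

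For the generating function, multiply both sides by $x^n u^{2n+\kappa-\sum c_j}\prod s_j^{c_j}$ and sum. The left side gives $\G^{(i)}_\kappa$. On the right, $\frac{ic_i}{2n}\,g_\kappa$ together with the factor $x^n$ is exactly the coefficient produced by applying $i s_i\partial_{s_i}$ to the term $\frac{x^n}{2n} g_\kappa\, s_i^{c_i}\cdots$ of $\G_\kappa$.

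The constant term $u$ present when $\kappa=1$ is killed by $\partial_{s_i}$, so it does not interfere. The $u$-exponent does not depend on $s_i$, so differentiation leaves it unchanged.

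The main obstacle is verifying that re-rooting at an arbitrary positive vertex (including even $k$, where the orientation flips) yields a valid labeling satisfying the red-edge convention. We will handle this by noting that the convention is equivalent to the following: red edges, traversed along the black-red cycle starting from the root through its black edge, alternate in the stated pattern. This pattern is invariant under starting from any positive vertex and traversing in the direction that leaves through its black edge.
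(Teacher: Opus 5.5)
Your argument is correct. It rests on the same re-rooting symmetry as the paper but organizes the count differently.

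The paper lets the full dihedral re-rooting group $D_{2n}$ of order $4n$ act. It uses that the automorphism group $A(G)$ of the unrooted graph acts freely on vertices, and reads off the fraction $|V_i(G)|/(4n)=2ic_i/(4n)$ orbit by orbit. You instead double count pairs (rooted graph, vertex) via the re-rooting bijection $(G,v)\mapsto(G_v,\text{old root})$. Since rooted graphs are rigid, this needs no automorphism or orbit-size bookkeeping. The ratio $ic_i/(2n)$ then comes directly from the fact that each $i$-cycle carries exactly $i$ positive vertices. What your version must supply instead is the check you flag: re-rooting at a positive vertex $+k$ must preserve the sign classes, so that the old root is again positive in the new labeling. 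This does hold. The relabeling is $\pm j\mapsto\pm(j-k+1)$ for odd $k$ and $\pm j\mapsto\pm(k+1-j)$ for even $k$ (indices mod $2n$), so signs are kept.

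One claim in your proposal is false, although your count never uses it: the admissible roots are not just the positive vertices. Any of the $4n$ vertices can be labeled $+1$, since its black and red neighbors then determine all other labels. Re-rooting at a negative vertex simply swaps the two sign classes, and the gray parity condition is symmetric under that swap; this is why the paper's group has order $4n$. Your restricted count remains valid because the positive re-rootings form a closed family, namely the index-$2$ subgroup of $D_{2n}$ preserving the sign classes. However, running the same double count over all $4n$ vertices, with $2ic_i$ of them on $i$-cycles, gives the identical ratio and makes the sign-class check unnecessary.
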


\begin{proof}
Let $V_i(G)$ be the set of vertices in the black-gray $i$-cycles of a special breakpoint graph $G$, and so $|V_i(G)|=2ic_i$.
Consider the dihedral re-rooting group $D_{2n}$ of order $4n$ acting on the special breakpoint graphs of order $4n$ (and thus on subsets of their vertices).
Let $A(G)\leq D_{2n}$ be the automorphism group of the unrooted version of $G$, for which $V_i(G)$ forms an invariant. This action is free on vertices: a color-preserving automorphism that fixes one vertex fixes its unique black and red neighbors, and hence fixes the entire black-red cycle. Therefore every $A(G)$-orbit of vertices has size $|A(G)|$.
It follows that $|D_{2n}\cdot G| = 4n/|A(G)|$, and the proportion of graphs in $D_{2n}\cdot G$ with the root in $V_i(G)$ equals $\frac{|V_i(G)|/|A(G)|}{4n/|A(G)|} = \frac{ic_i}{2n}$.

The displayed proportion depends only on the cycle structure, so summing over all unrooted graphs with the prescribed cycle structure gives
\[
g^{(i)}_\kappa(n,c_1,c_2,\dots)=\frac{ic_i}{2n}g_\kappa(n,c_1,c_2,\dots).
\]
Multiplying the defining series of $\G_\kappa$ by $ic_i$ is exactly the action of $i s_i\partial/\partial s_i$, which proves the generating-function identity.
\end{proof}

\begin{lemma}\label{lem:2break}
Let $G$ be a special breakpoint graph, and suppose that a special breakpoint graph $G'$ is obtained from $G$ by deleting two gray edges and re-pairing their four endpoints in a different way. Then the number of black-gray cycles changes by $\pm1$. More precisely, if the two deleted gray edges lie in distinct black-gray cycles, those cycles merge into one; if they lie in the same black-gray cycle, that cycle splits into two.
\end{lemma}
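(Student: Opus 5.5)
This is the classical 2-break / transposition argument. I would phrase it in permutation language. The black-gray cycles of a special breakpoint graph are exactly the cycles of the encoded permutation $\tau\in S_{2n}$. Each gray edge joins $-i$ to $+\tau(i)$, so it is naturally indexed by $i$. Deleting the gray edges $\{-a,+\tau(a)\}$ and $\{-b,+\tau(b)\}$ with $a\neq b$ leaves the four endpoints $-a,-b,+\tau(a),+\tau(b)$.

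The first step is to identify the admissible re-pairings. There are three perfect matchings on these four endpoints, and one of them is the original pair. Because $G'$ must again be special, every gray edge must join a negative label to a positive one. This rules out the pairing $\{-a,-b\},\{+\tau(a),+\tau(b)\}$, and the only remaining possibility is $\{-a,+\tau(b)\},\{-b,+\tau(a)\}$. Hence $G'$ encodes $\tau'=\tau\circ(a\ b)$, since $\tau'(a)=\tau(b)$, $\tau'(b)=\tau(a)$, and $\tau'$ agrees with $\tau$ elsewhere.

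The second step is the standard fact that multiplying by a transposition changes the number of cycles by exactly one. I would verify it directly from the cycle notation rather than quoting it.

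\begin{itemize}
\item If $a$ and $b$ lie in the same cycle $(a\,\tau(a)\,\dots\,x\;b\,\tau(b)\,\dots\,y)$ of $\tau$, then following $\tau'$ from $a$ gives $a\mapsto\tau(b)\mapsto\dots\mapsto y\mapsto a$. Following it from $b$ gives $b\mapsto\tau(a)\mapsto\dots\mapsto x\mapsto b$. So this cycle splits into two cycles, and all other cycles are untouched.
\item If $a$ and $b$ lie in distinct cycles, the same tracing concatenates them into a single cycle.
\end{itemize}

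In both cases the pair of deleted gray edges determines the black-gray cycles involved, because gray edge $i$ lies in the black-gray cycle corresponding to the $\tau$-cycle containing $i$. This gives exactly the merge/split dichotomy in the statement.

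The only delicate point is the first step: one must confirm that the sign constraint really forces the unique alternative pairing. One must also check that the degenerate case where the four endpoints are not distinct cannot occur, and it cannot, because gray edges form a matching. The red and black matchings are unchanged, so $G'$ automatically keeps its single black-red cycle and its rooted labeling. Nothing beyond this bookkeeping and the cycle-tracing computation seems needed.
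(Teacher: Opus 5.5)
Your proof is correct and follows essentially the same route as the paper. Your sign condition is exactly the paper's odd/even parity condition: with the natural numbering of red edges, positive labels lie on odd-numbered red edges and negative labels on even-numbered ones. As in the paper, you use it to exclude the same-parity re-pairing, which leaves the single cross re-pairing.

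The only difference is that you carry out the merge/split step in permutation language, as $\tau'=\tau\circ(a\,b)$ together with the standard transposition cycle-count fact. The paper instead traces the two opened alternating black-gray paths, each with one odd and one even endpoint, directly in the graph. The two tracings are the same computation.
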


\begin{proof}
Number the red edges consecutively along the black-red cycle by $1,2,\ldots,2n$, and call a vertex odd or even according to the parity of its incident red edge. Every black edge joins an odd vertex to an even vertex, and the defining parity condition for a special breakpoint graph says the same for every gray edge.

Write the two deleted gray edges as $\{o_1,e_1\}$ and $\{o_2,e_2\}$, where the $o_i$ are odd and the $e_i$ are even. Of the two nontrivial pairings of these four endpoints, the pairing $\{o_1,o_2\},\{e_1,e_2\}$ violates the parity condition. Hence the only re-pairing that can again give a special breakpoint graph is
\[
\{o_1,e_2\},\qquad \{o_2,e_1\}.
\]

If the deleted gray edges lie in distinct black-gray cycles, deleting them opens the two cycles into two alternating paths, each with one odd and one even endpoint. The admissible cross-pairing above joins the two paths into a single cycle. If the deleted gray edges lie in the same black-gray cycle, deleting them produces two alternating paths, again each with one odd and one even endpoint; the same admissible re-pairing closes the two paths separately. Thus a pair of cycles merges in the first case and one cycle splits in the second, so the number of black-gray cycles changes by $\pm1$.
\end{proof}

\begin{remark}[Equivalent forms of the switching rule]
There is an equivalent geometric way to see the forbidden reconnection. Orient every black edge by the orientation inherited from the black-red Hamiltonian cycle. When a black-gray cycle is traversed, all of its black edges are followed consistently with these orientations. If two gray edges on one black-gray cycle were reconnected so as to produce a single ``twisted'' cycle rather than two cycles, one of the two intervening alternating paths would be reversed, and its black edges would then be traversed in the opposite direction. Hence such a twist cannot occur. The parity argument in Lemma~\ref{lem:2break} is the local endpoint version of this orientation obstruction.

In the usual breakpoint-graph language, the same move is a local double-cut-and-join (DCJ) surgery: two gray matching edges are cut and their four endpoints are paired again.  The special parity condition leaves exactly one admissible nontrivial re-pairing.  Thus the parity, orientation, and DCJ descriptions are three views of the same split--merge operation.
\end{remark}

\section{The spherical class \texorpdfstring{$\kappa=1$}{kappa=1}}\label{sec:kappa=1}
The spherical stratum is the extremal class: its canonical surface is a sphere, and its chord representation is noncrossing. We first turn this structure into a recursive equation for the full cycle-refined generating function, and then show that the resulting infinite-variable flow collapses canonically to one parameter, with the signed Catalan specialization becoming explicit.

\subsection{Noncrossing structure and the recursive equation}\label{sec:kappa1_structure}

The case of $\kappa=1$ corresponds to the special breakpoint graphs of order $4n$ with the total number of black-gray and gray-red cycles equal $2n+1$. The generating function in this case starts with
\[
\begin{split}
\G_1(x,u,s_1,s_2,\dots) & = u \\
&+ s_1^2 u \frac{x}{2} \\
&+ \big(2 s_1^2 s_2 u^2 + s_1^4 u\big) \frac{x^2}4 \\
&+ \big((3 s_1^2 s_2^2 + 2 s_1^3 s_3) u^3 + 6 s_1^4 s_2 u^2 + s_1^6 u\big) \frac{x^3}6 \\
&+ ((4 s_1^2 s_2^3 + 8 s_1^3 s_2 s_3 + 2 s_1^4 s_4) u^4 + (20 s_1^4 s_2^2 + 8 s_1^5 s_3) u^3 + 12 s_1^6 s_2 u^2 + s_1^8 u) \frac{x^4}8 \\
&+ \dots
\end{split}
\]

For example, in the coefficient of $\frac{x^2}4$ the term $2 s_1^2 s_2 u^2$ enumerates permutations $(3,2,1,4)$ and $(1,4,3,2)$, while the term $s_1^4 u$ enumerates the identity permutation (Fig.~\ref{fig:example}).

Figure~\ref{fig:correspondence} shows that the special breakpoint graph of the permutation $\tau=(3,2,1,4)$ with $\kappa=1$ is \emph{noncrossing}, that is its gray edges drawn as straight chords within its black-red cycle drawn as a circle do not cross. In fact, this is more than just a coincidence but rather a characterizing property of the special breakpoint graphs in the case of $\kappa=1$:

\begin{lemma}\label{lem:noncrossing}
For a positive integer $n$, let $G$ be a special breakpoint graph of order $4n$.
The total number of black-gray and gray-red cycles in $G$ equals $2n+1$ if and only if $G$ is noncrossing.
\end{lemma}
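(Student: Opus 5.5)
We read the statement through Proposition~\ref{prop:canonical_surface}: the total number of black-gray and gray-red cycles equals $2n+1$ exactly when $\kappa(G)=1$, i.e.\ when $\Sigma_G\cong S^2$. So the lemma says that the canonical surface is a sphere if and only if the chord diagram (black-red cycle as a circle, gray edges as chords) is noncrossing. Rather than lean on general topology, we prove both directions combinatorially. We induct on $n$ using the switching move of Lemma~\ref{lem:2break}, together with a gray-red count that plays the role dual to black-gray.

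\emph{Noncrossing $\Rightarrow$ $2n+1$ cycles.} We treat the black-red circle with gray chords as a planar drawing: the circle plus the interior chords. In a noncrossing drawing the gray chords cut the disk into regions. We use the standard fact that a plane map of this type has Euler characteristic $2$: the outside of the circle accounts for black/red structure and the inside for gray. More concretely, we construct the canonical triangulated surface of Proposition~\ref{prop:canonical_surface} directly in the plane by placing triangles inside and outside the circle appropriately. If an explicit argument is preferred, we instead use induction. A noncrossing perfect gray matching respecting the parity condition has a chord joining two adjacent positions, i.e.\ endpoints of a single black edge or of a single red edge. Removing that chord together with the adjacent black and red edges, and splicing, yields a noncrossing special breakpoint graph of order $4(n-1)$ (after accounting for the two vertices removed, which are paired with a partner chord). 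We then check that this deletion lowers $c_{BG}+c_{GR}$ by exactly $2$. The bookkeeping here is the delicate part: the removal must be done in pairs so that the parity and two-vertex-per-step structure is preserved. The base case $n=1$ is immediate from the identity example.

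\emph{$2n+1$ cycles $\Rightarrow$ noncrossing.} We argue by contrapositive. Suppose gray chords $\{o_1,e_1\}$ and $\{o_2,e_2\}$ cross. We apply the switch of Lemma~\ref{lem:2break}, which changes $c_{BG}$ by $\pm1$. We then examine the gray-red cycles: by the symmetric parity argument, with red playing the role of black, $c_{GR}$ also changes by $\pm1$. For a crossing pair we show that the switch cannot decrease both counts, so $c_{BG}+c_{GR}$ changes by $0$ or $+2$. Repeated switches on crossings resolve the diagram to a noncrossing one. Along the way the sum never decreases, and by the first direction the process ends at exactly $2n+1$. So the proof needs a lemma stating that on a crossing pair the sum changes by $0$ or $+2$, while a resolution step whose total change is strictly positive must occur somewhere. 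Together with the upper bound $\kappa\le 1$ from Proposition~\ref{prop:canonical_surface}, this forces the sum for a crossing $G$ to be at most $2n-1$.

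The main obstacle is that parity step: showing that a crossing configuration always costs at least one in Euler characteristic. I would try first to phrase it topologically. The crossing chords, together with the circle, give a closed curve on $\Sigma_G$ meeting a boundary arc in a way that is only possible when the genus is positive, since two crossing chords on a disk glued along the gray edges produce a handle or crosscap. Failing that, I would fall back on direct case analysis of the four arcs of the circle cut at $o_1,e_1,o_2,e_2$.
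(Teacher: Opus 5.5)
\textbf{There is a genuine gap.} The reverse direction fails because the lemma it rests on is false.

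You claim, by ``the symmetric parity argument with red playing the role of black,'' that the switch of Lemma~\ref{lem:2break} changes $c_{GR}$ by $\pm1$, so that $c_{BG}+c_{GR}$ changes by an even amount. The proof of Lemma~\ref{lem:2break} works because black \emph{and} gray edges both join an odd vertex to an even one. Hence every path left after deleting two gray edges from a black-gray cycle has endpoints of opposite parity. Red edges lack this property: both endpoints of a red edge are incident to that same red edge, so they have equal parity (equivalently, equal sign). A gray-red path can therefore have endpoints of equal parity. In that case the unique admissible re-pairing closes the two paths into one twisted cycle, and $\Delta c_{GR}=0$.

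Here is a concrete instance with $n=1$. The identity has $c_{BG}=2$, $c_{GR}=1$, and is noncrossing. The permutation $\tau=(2,1)$ has gray edges $\{-1,+2\}$ and $\{-2,+1\}$, which cross, and $c_{BG}=c_{GR}=1$. These two graphs differ by exactly this switch, and the sum changes by $1$. So $\tau=(2,1)$ is a crossing graph with $c_{BG}+c_{GR}=2n$. This refutes both your step ``$0$ or $+2$'' and your conclusion ``at most $2n-1$''. It is not a small-$n$ accident: since $c_{BG}+c_{GR}=\chi(\Sigma_G)+2n-1$ and $\Sigma_G$ may be non-orientable, the sum has no fixed parity. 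A crossing can cost a single crosscap ($\chi$ drops by $1$) rather than a handle, so no parity-preservation argument can succeed. Moreover, the claim you still need, that some uncrossing step strictly increases the sum, is the whole content of this direction and is left unproved. Once $\Delta c_{GR}$ can lie anywhere in $\{-1,0,1\}$, even the weaker claim that the sum never decreases along uncrossing is unsupported.

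What is missing is an argument that uses $\kappa=1$ directly to exclude crossings. The paper argues from the sphere to the disk. If $c_{BG}+c_{GR}=2n+1$, then $\Sigma_G\cong S^2$ by Proposition~\ref{prop:canonical_surface}. The black-red cycle is a face, and deleting its interior leaves a disk bounded by that cycle. The gray edges lie in this disk as disjoint properly embedded arcs, so their endpoint pairing is noncrossing. Your topological fallback points this way, but it should be organized like this rather than by classifying crossings as handles or crosscaps.

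For the forward direction, your induction by deleting an innermost chord runs into trouble. That chord is necessarily a black-gray $1$-cycle, since the parity condition forbids chords parallel to red edges. Deleting it leaves $2n-1$ red edges, so the result is no longer a special breakpoint graph; this is why your removal must be paired and the bookkeeping stalls. A direct face count avoids the problem. The $2n$ noncrossing chords cut the disk into $2n+1$ regions. Each region is bounded by gray chords alternating with circle edges of a single color, because the endpoints of each chord in a noncrossing perfect matching sit at positions of opposite parity around the circle. These regions are exactly the black-gray and gray-red cycles, so $c_{BG}+c_{GR}=2n+1$. This is the planar-embedding content of the paper's converse direction.
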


\begin{proof}
By Proposition~\ref{prop:canonical_surface}, the equality $c_{BG}+c_{GR}=2n+1$ is equivalent to $\chi(\Sigma_G)=2$, hence to $\Sigma_G\cong S^2$.  In the canonical embedding, the unique black-red cycle is a Hamiltonian face.  Removing the interior of that face from the sphere leaves a disk whose boundary is the black-red cycle and which contains all gray edges.  The gray edges are therefore disjoint properly embedded arcs with endpoints on the boundary.  Their endpoint pairing is noncrossing, and after an isotopy of the disk the arcs may be drawn as nonintersecting straight chords.

Conversely, suppose the gray edges can be drawn as noncrossing chords inside the black-red cycle.  Together with the black-red boundary edges this gives a planar embedding with the same local color order as the canonical colored-triangulation embedding.  Capping the exterior of the black-red cycle by a disk yields the canonical surface $\Sigma_G$ as a sphere.  Proposition~\ref{prop:canonical_surface} then gives $\kappa=1$, equivalently $c_{BG}+c_{GR}=2n+1$.
\end{proof}

\begin{remark}[Equivalent extremal formulations]
The same extremal structure has three useful descriptions.  In the simple BG-graph formulation~\cite{Alekseyev2007}, recolor every red edge of $G$ black and double every gray edge.  The resulting graph has its black edges on one Hamiltonian circle, while its gray edges are doubled chords, and its number of black-gray alternating cycles is $c_{BG}+c_{GR}$.  Hence the conditions $c_{BG}+c_{GR}=2n+1$, maximal alternating-cycle count, noncrossing chord representation, and $\Sigma_G\cong S^2$ are equivalent.  They are cycle-theoretic, chord-diagram, and surface formulations of the same phenomenon; the proof above simply uses the surface language.
\end{remark}

\begin{thm}\label{th:G1_PDE}
The function $\G_1(x,u,s_1,s_2,\dots)$ satisfies the following PDE:
$$
2 \frac{\partial \G_1}{\partial x} = \big( s_1 + \sum_{i\geq 1} i s_{i+1} \frac{\partial \G_1}{\partial s_i}\big)^2 \big(u + 2x\frac{\partial \G_1}{\partial x}\big)
$$
with the boundary condition $\G_1(0,u,s_1,s_2,\dots) = u$.
\end{thm}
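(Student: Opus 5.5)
We would prove the PDE by a root decomposition of noncrossing graphs, reading it coefficientwise. By the rooted-size remark, the coefficient of $x^{n-1}$ in $2\G_{1,x}$ counts rooted noncrossing special breakpoint graphs of order $4n$, weighted by $u^{c_{GR}}\prod s_i^{c_i}$. Lemma~\ref{lem:noncrossing} lets us use the chord picture of Fig.~\ref{fig:correspondence}b throughout: a black-red circle with noncrossing gray chords. The goal is a bijection between such rooted graphs and ordered triples $(A,B,C)$, with the two ``black'' factors $A,B$ each contributing $s_1+\sum_i i s_{i+1}\G_{1,s_i}$ and the ``red'' factor $C$ contributing $u+2x\G_{1,x}$.

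\textbf{Construction.} Look at the red edge at the root, $\{+1,+2\}$, and the two black edges $\{-1,+1\}$ and $\{-2,+2\}$ flanking it. The decomposition is local around this red edge.
\begin{enumerate}
\item For each flanking black edge, follow its black-gray cycle. Either that cycle is a $1$-cycle (gray edge parallel to the black edge), giving the term $s_1$. Or its gray edges are chords that, because of noncrossingness, cut off a region of the disk.
\item Cutting along those chords and closing up the cut-off region should give a smaller rooted noncrossing special breakpoint graph whose root lies on a black-gray $i$-cycle. After reinsertion this cycle becomes the $(i+1)$-cycle through the flanking black edge.
\item By Lemma~\ref{lem:gfi}, such rooted pieces are enumerated by $\G^{(i)}_1=i s_i\G_{1,s_i}$. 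The relabelling $s_i\mapsto s_{i+1}$ of the root cycle gives exactly $i s_{i+1}\G_{1,s_i}$.
\item The remaining region, bounded by the gray-red cycle through the root red edge, is either empty or a rooted noncrossing graph. If empty, that gray-red cycle closes up immediately and contributes $u$. Otherwise the piece is counted by $2x\G_{1,x}$: one factor of $x$ accounts for the vertices $\pm1,\pm2$ reabsorbed at the junction, and $2\partial_x$ removes the $1/(2n)$ and supplies the root.
\end{enumerate}

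\textbf{Verification.} We would check three things in order.
\begin{itemize}
\item \emph{Independence and inverse.} The three regions are independent because of noncrossingness, and the regluing is an inverse map. The cycle statistic $c_{BG}+c_{GR}=2n+1$ is preserved by an Euler-characteristic count: gluing three spheres along the prescribed arcs again gives a sphere by Proposition~\ref{prop:canonical_surface}.
\item \emph{Parity.} Each regluing connects odd to even vertices. The admissibility argument of Lemma~\ref{lem:2break} should show the merged pieces are again special breakpoint graphs.
\item \emph{Bookkeeping.} The cycle counts add up: black-gray cycles are unchanged except for the root cycles, which are re-indexed, and the gray-red cycles of the three pieces stay disjoint except through the distinguished one.
\end{itemize}
The boundary condition $\G_1(0)=u$ is just the empty-graph convention.

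Before writing this in full, we would sanity-check the exponents against the displayed expansion. At $n=1$ the right side must give $s_1^2u$, which comes from both black factors equal to $s_1$ and the red factor equal to $u$. At $n=2$ we would confirm the terms $2s_1^2s_2u^2+s_1^4u$.

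\textbf{Main obstacle.} The hard step is making the cut precise so the sizes match. A black factor involving an $i$-cycle attaches a piece of order $4m$ plus a fractional number of vertices, and the $x$-powers must add to exactly $n-1$. The degenerate case where both flanking black edges lie on the same black-gray cycle also has to be excluded or absorbed. If the red-edge root decomposition does not balance, the fallback is to decompose at the root vertex instead: use the two gray chords at $\pm1$, which split the circle into three arcs. Adjust the identification of which arc carries the $u+2x\G_{1,x}$ factor until the $n\le 2$ coefficients agree.
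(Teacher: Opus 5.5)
Your outline has the same shape as the paper's decomposition: two ``black'' factors come from the black-gray cycles through the black edges flanking the root red edge, and one ``red'' factor comes from the gray-red cycle through that red edge. There is a genuine gap, though. The bijection, which is the whole content of the proof, is never defined, and your provisional size accounting is wrong.

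\emph{What is missing or wrong.} ``Cutting along those chords and closing up'' leaves four things unspecified: which chords are cut, which vertices are deleted, how each region is closed into a special breakpoint graph, and where each piece is rooted. Cutting along all gray chords of an $(i+1)$-cycle would give many regions, not one piece. Your claim that the vertices $\pm1,\pm2$ are absorbed cannot be right. The vertices $-1$ and $-2$ must survive, as endpoints of the distinguished black edges in the two black pieces. Likewise, the $x$ in $2x\G_{1,x}$ does not account for deleted vertices. It only makes $\coef{x^m}$ of that factor count rooted graphs of order $4m$, exactly as $\coef{x^m}\,i s_{i+1}\G_{1,s_i}$ does by Lemma~\ref{lem:gfi}. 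The four deleted vertices correspond instead to the drop to $x^{n-1}$ in $2\G_{1,x}$. This is why you see ``fractional'' sizes. Your fallback uses the wrong pair of chords: the gray chords at $+1$ and $-1$ lie on the same black-gray cycle. Finally, matching the coefficients for $n\le2$ proves nothing.

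\emph{The missing construction.} Cut along exactly the two gray chords at the endpoints of the root red edge. Let $c$ and $d$ be the gray partners of $a=+1$ and $b=+2$. Noncrossingness forces the cyclic order $a,b,\dots,d,\dots,c,\dots$. The vertices strictly inside each arc cut off by these chords are perfectly matched among themselves. Hence the black neighbor of $d$ lies on the arc from $b$ to $d$, and the black neighbor of $c$ lies on the arc from $c$ back to $a$. So the chords $\{a,c\}$ and $\{b,d\}$ split the disk into three regions. The black-gray cycles through $\{-1,+1\}$ and $\{-2,+2\}$ lie in different regions, so your degenerate case never occurs.

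One then deletes exactly $a,b,c,d$. Write $\bar c,\bar d$ for the black neighbors and $c',d'$ for the red neighbors of $c,d$. Contract the black-gray paths $-1,a,c,\bar c$ and $-2,b,d,\bar d$ to black edges $\{-1,\bar c\}$ and $\{-2,\bar d\}$. Contract the gray-red path $c',c,a,b,d,d'$ to a red edge $\{c',d'\}$. The orders then add to $4(n-1)$. The degenerate cases $c=-1$, $d=-2$, and $\{c,d\}$ red give the terms $s_1$, $s_1$, and $u$. Reinsertion is the inverse.

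Each piece is again special. Its red edges are a contiguous block of those of $G$ (plus the new red edge in the red piece), and there is an even number of them, by the odd/even balance of the internally matched vertices together with the total count $2n$. Lemma~\ref{lem:2break} plays no role, since nothing is re-paired. Noncrossingness passes in both directions, so Lemma~\ref{lem:noncrossing} replaces your sphere-gluing argument. Root each piece at an endpoint of its new edge. The black-gray cycles through the two new black edges are exactly the ones lengthened by one, and all other cycles are unchanged. This yields the factors $s_1+\sum_i i s_{i+1}\G_{1,s_i}$ and $u+2x\G_{1,x}$.
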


\begin{proof} By Lemma~\ref{lem:noncrossing}, the special breakpoint graphs enumerated by $\G_1$ are exactly the noncrossing breakpoint graphs.

Let $S$ be the set composed of the noncrossing special breakpoint graphs and the empty graph. 
We define a bijective transformation of graphs from $S$ of order $4n$ for each $n>0$ into an ordered triple of graphs from $S$ of the total order $4(n-1)$ as follows. For a graph $G\in S$ rooted at vertex $a$, let $\{a,b\}$ and $\{a,c\}$ be the red and gray edges incident to $a$, and further let $(u,a,c,v)$ and $(x,b,d,y)$ be black-gray paths, and $\{c,w\}$ and $\{d,z\}$ be red edges (Fig.~\ref{fig:k1transform}a). By placing $G$ in the plane without edge crossings, doubling gray edges $\{a,c\}$ and $\{b,d\}$ and cutting $G$ along them, we split $G$ into three parts sharing only these gray edges and further contract them into three graphs from $S$ as follows (Fig.~\ref{fig:k1transform}b):

\begin{figure}[H]
    \centering
    \includegraphics[width=\linewidth]{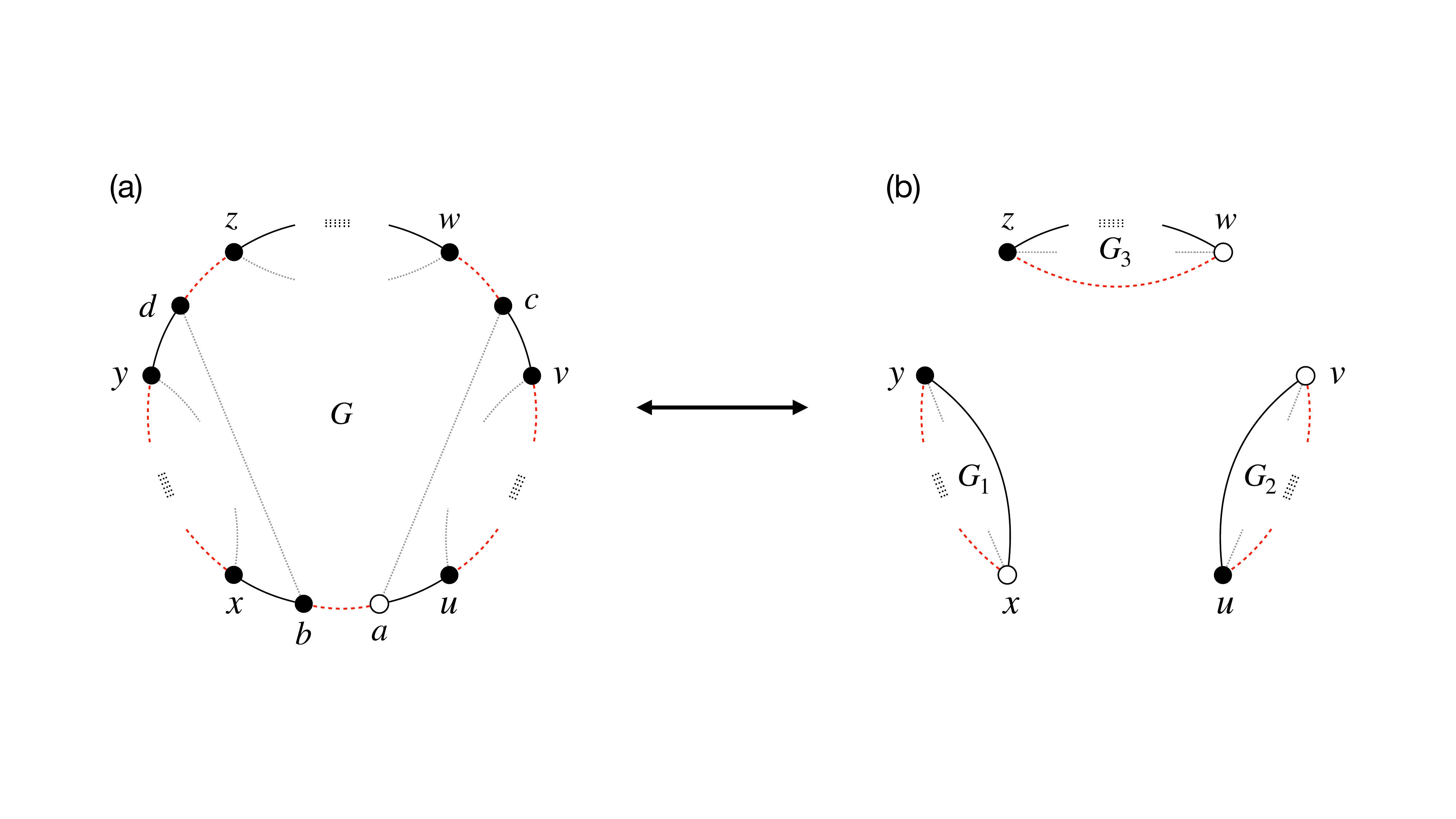}
    \caption{A bijective transformation of \textbf{(a)} a noncrossing special breakpoint graph of order $4n$ rooted at vertex $a$ into \textbf{(b)} three noncrossing special breakpoint graphs rooted at vertices $x,v,w$ and the total order $4(n-1)$.}
    \label{fig:k1transform}
\end{figure}
\begin{itemize}
\item $G_1$ rooted at $x$ is obtained from the part containing black-gray path $(x,b,d,y)$ by contracting it into a black edge $(x,y)$, except for the case $(b,x)=(y,d)$ when $G_1$ is empty;
\item $G_2$ rooted at $v$ is obtained from the part containing black-gray path $(u,a,c,v)$ by contracting it into a black edge $(u,v)$, except for the case $(a,u)=(v,c)$ when $G_2$ is empty;
\item $G_3$ rooted at $w$ is obtained from the part containing red-gray path $(w,c,a,b,d,z)$ by contracting it into a red edge $(w,z)$, except for the case $(d,z)=(w,c)$ when $G_3$ is empty.
\end{itemize}

The constructed bijection implies that the black-gray cycle structure of $G$ is the same as that of $G' := G_1 \cup G_2\cup G_3$, except that one $i$-cycle and one $j$-cycle in $G'$ are replaced in $G$ by an $(i+1)$-cycle and a $(j+1)$-cycle, respectively, where $2i$ and $2j$ are the lengths of the black-gray cycles containing the black edges $(x,y)$ and $(u,v)$ in $G'$. In the exceptional cases of $G_1$ and/or $G_2$ being empty, the black-gray cycle structure of $G$ contains one or two more $1$-cycles, respectively.

Similarly, the constructed bijection implies that the number of gray-red cycles in $G$ is the same as in $G'$, except when $G_3$ is empty in which case $G$ has one more gray-red cycle. It follows that
$$
2 \frac{\partial \G_1}{\partial x} = \big( s_1 + \sum_{i\geq 1} i s_{i+1} \frac{\partial \G_1}{\partial s_i}\big)\cdot \big( s_1 + \sum_{i\geq 1} i s_{i+1} \frac{\partial \G_1}{\partial s_i}\big)\cdot \big(u + 2x\frac{\partial \G_1}{\partial x}\big),
$$
where the expressions in the parentheses correspond to the graphs $G_1,G_2,G_3$ respectively and the operator $i s_{i+1} \frac{\partial}{\partial s_i}$ produces the generating function 
for special breakpoint graphs with the root belonging to a black-gray $i$-cycle
(by Lemma~\ref{lem:gfi})
 and replacing one $s_i$ with $s_{i+1}$. The terms $s_1$ correspond to the cases of empty $G_1$ and/or $G_2$, and the term $u$ corresponds to the case of empty $G_3$.

The PDE determines the coefficients recursively in the $x$-degree: once the coefficients of $x^k$ for $k<n$ are known, the coefficient of $x^n$ is determined.  Together with the empty-graph value
\[
\G_1(0,u,s_1,s_2,\dots)=u,
\]
this proves the stated characterization.
\end{proof}

\subsection{Shift flow and a universal one-parameter reduction}\label{sec:shift_flow}

The equation in Theorem~\ref{th:G1_PDE} involves infinitely many cycle variables, but only through a single first-order shift derivation.  This makes every target weight sequence lie on a canonical one-parameter orbit.  For later use in both strata, let
\[
C_m:=\frac{1}{m+1}\binom{2m}{m},\qquad
\C(t):=\sum_{m\ge0} C_m t^m=\frac{1-\sqrt{1-4t}}{2t},
\qquad \C(t)=1+t\C(t)^2,
\]
for the Catalan generating function.

Write
\[
L:=\sum_{i\geq1} i s_{i+1}\frac{\partial}{\partial s_i},
\qquad
E:=\frac{\partial}{\partial s_1}.
\]
Then Theorem~\ref{th:G1_PDE} can be written as
\begin{equation}\label{eq:G1_vectorfield}
2\G_{1,x}=\bigl(s_1+L\G_1\bigr)^2\bigl(u+2x\G_{1,x}\bigr).
\end{equation}

The operator $L$ also has a direct cycle-index interpretation.  For a cycle-profile monomial
\[
M_{\mathbf c}(\mathbf s):=\prod_{j\geq1}s_j^{c_j},
\]
one has
\begin{equation}\label{eq:L_cycle_lengthening}
LM_{\mathbf c}
=
\sum_{i\geq1} i c_i\,s_{i+1}s_i^{c_i-1}
\prod_{j\ne i}s_j^{c_j}.
\end{equation}
Thus $i s_{i+1}\partial_{s_i}$ marks one of the $i$-cycles together with one of its $i$ positions and then replaces its cycle weight $s_i$ by $s_{i+1}$.  In this sense $L$ is the cycle-lengthening analogue of the rooted cycle-selection operator $i s_i\partial_{s_i}$ from Lemma~\ref{lem:gfi}.  Along a shift-flow curve, differentiating a monomial performs exactly the same operation term by term; this is the combinatorial content of the chain-rule identity~\eqref{eq:shift_flow_chain_rule} below.

The shift field can be linearized directly on the generating series of the cycle weights.  Put
\begin{equation}\label{eq:weight_series_SR}
S(z):=\sum_{i\geq1}s_i z^i,
\qquad
R(z):=\frac{S(z)}{z}.
\end{equation}
Since $L$ acts only on the coefficients $s_i$,
\begin{equation}\label{eq:L_linearization}
LS
=\sum_{i\geq1}i s_{i+1}z^i
=S_z-\frac{S}{z},
\qquad
LR=R_z.
\end{equation}
Thus, if $\mathbf s(t)$ is an integral curve of $L$ and $S(z,t)$ and $R(z,t)$ are its weight series, then
\begin{equation}\label{eq:transport_shift_flow}
R_t=R_z.
\end{equation}
The infinite system defining the shift flow is therefore conjugate to the ordinary translation field $\partial_z$.  If $R(z,0)=f(z)$, its formal solution is
\begin{equation}\label{eq:shift_flow_translation}
R(z,t)=f(z+t),
\qquad
S(z,t)=z f(z+t).
\end{equation}
Equivalently, if $S_0(z):=S(z,0)=zf(z)$, then
\[
S(z,t)=\frac{z}{z+t}S_0(z+t).
\]
Coefficient extraction in~\eqref{eq:shift_flow_translation} gives the integral curves used below.

\begin{prop}[Shift-flow reduction]\label{prop:G1_shift_flow}
Fix a sequence $\mathbf w:=(w_1,w_2,\ldots)$ and define
\[
f(t):=\sum_{k\geq0}w_{k+1}t^k,
\qquad
s_i(t):=\coef{z^i}\,z f(z+t)
       =\frac{f^{(i-1)}(t)}{(i-1)!}\quad (i\geq1).
\]
Then $s_i(0)=w_i$ and
\[
\frac{d}{dt}s_i(t)=i s_{i+1}(t).
\]
Consequently, if
\[
F(x,t):=\G_1\bigl(x,u,s_1(t),s_2(t),\ldots\bigr),
\qquad
H(x,t):=F(x,t)+\int_0^t f(\tau)\,d\tau,
\]
then
\begin{equation}\label{eq:G1_reduced_generic}
2H_x=H_t^2\bigl(u+2xH_x\bigr),
\qquad
H(0,t)=u+\int_0^t f(\tau)\,d\tau.
\end{equation}
In particular, the specialization of $\G_1$ at $s_i=w_i$ is recovered at $t=0$.
\end{prop}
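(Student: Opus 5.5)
The plan is to verify the integral-curve claim by Taylor expansion, and then use the chain rule to convert the shift derivation $L$ into the $t$-derivative along the curve.

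\emph{The curve.} Since $f$ is a formal power series, Taylor's formula gives $zf(z+t)=\sum_{k\ge0} z^{k+1}f^{(k)}(t)/k!$. Reading off the coefficient of $z^i$ yields $s_i(t)=f^{(i-1)}(t)/(i-1)!$. At $t=0$ this equals $\coef{t^{i-1}}f=w_i$. Differentiating,
\[
s_i'(t)=\frac{f^{(i)}(t)}{(i-1)!}=i\,\frac{f^{(i)}(t)}{i!}=i\,s_{i+1}(t).
\]
This is the coefficient form of the transport equation~\eqref{eq:transport_shift_flow}.

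\emph{The chain rule.} Every coefficient of $\G_1$ in $x^n$ is a polynomial in $u,s_1,s_2,\dots$, so the chain rule applies coefficientwise:
\[
F_t=\sum_i \G_{1,s_i}\,s_i'(t)=(L\G_1)\big|_{\mathbf s(t)}.
\]
I will record this identity as \eqref{eq:shift_flow_chain_rule}. Also $s_1(t)=f(t)$. Hence
\[
H_t=F_t+f(t)=\bigl(s_1+L\G_1\bigr)\big|_{\mathbf s(t)}.
\]
Moreover $H_x=F_x=\G_{1,x}|_{\mathbf s(t)}$, because the added integral does not depend on $x$.

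\emph{Substitution.} I substitute $\mathbf s(t)$ into \eqref{eq:G1_vectorfield}, which is legitimate because that identity holds as an identity of formal series in the $s_i$. This gives $2H_x=H_t^2(u+2xH_x)$. The boundary value $H(0,t)=u+\int_0^t f$ follows from $\G_1(0,\dots)=u$. At $t=0$ the integral vanishes and $s_i(0)=w_i$, so $H(x,0)=\G_1(x,u,\mathbf w)$.

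\emph{Main obstacle: formal legitimacy.} The substitution $s_i\mapsto s_i(t)$ must be well defined, and the derivatives must commute with it. My approach is to work in $\mathbb{Q}[u][[t]]$ with $\mathbf w$ treated as formal indeterminates or as numbers. The key point is that each $x^n$-coefficient of $\G_1$ is a polynomial involving only finitely many $s_i$, namely those with $i\le 2n$. Therefore substitution and $t$-differentiation act on finite polynomial expressions, and the chain rule is exact. No convergence issue arises, since $x$ remains formal and each $x$-coefficient is a finite composition of power series in $t$.
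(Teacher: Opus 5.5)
Your proposal is correct and follows essentially the same route as the paper. The paper reads off $s_i'(t)=is_{i+1}(t)$ from the transport relation $S_t=S_z-S/z$ instead of differentiating the Taylor coefficient directly; otherwise, exactly as you do, it uses the chain rule to identify $F_t$ with $L\G_1$ along the curve, notes $H_t=s_1+L\G_1$ and $H_x=F_x$, and substitutes into \eqref{eq:G1_vectorfield}. Your remark that each $x^n$-coefficient of $\G_1$ is a polynomial in $u,s_1,\dots,s_{2n}$, which makes the substitution and chain rule exact, is a detail the paper leaves implicit.
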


\begin{proof}
Equation~\eqref{eq:shift_flow_translation} gives
$S_t=S_z-S/z$, and comparison of the coefficient of $z^i$ yields
$s_i'(t)=i s_{i+1}(t)$.  Hence, by the chain rule,
\[
F_t=\sum_{i\geq1} i s_{i+1}\frac{\partial \G_1}{\partial s_i}=L\G_1
\]
along the curve.  Since $s_1(t)=f(t)$, we have $H_t=f(t)+F_t=s_1+L\G_1$, while $H_x=F_x$. Substitution into~\eqref{eq:G1_vectorfield} proves~\eqref{eq:G1_reduced_generic}.  The boundary value follows from $\G_1(0,u,s_1,s_2,\ldots)=u$.
\end{proof}

\paragraph{Ore-algebra viewpoint.}
The same linearization has a simple operator interpretation.  The natural setting is the skew-polynomial framework introduced by Ore~\cite{Ore1933}; mixed differential--shift annihilators and their algorithmic manipulation are standard in modern Ore-algebra treatments of holonomic functions~\cite{ChyzakSalvy1998}.  Let $\mathsf T_i$ denote the forward shift in the discrete index, $(\mathsf T_i a)_i:=a_{i+1}$, and let $D_t,D_z$ denote differentiation.  The coefficient system is annihilated by the differential--shift Ore operator
\[
D_t-i\mathsf T_i,
\qquad
\mathsf T_i\,i=(i+1)\mathsf T_i.
\]
After passing to the ordinary generating series, this becomes
\begin{equation}\label{eq:ore_shift_operator}
\left(D_t-D_z+z^{-1}\right)S=0.
\end{equation}
The gauge transformation $S=zR$ conjugates~\eqref{eq:ore_shift_operator} to the constant-coefficient transport operator
\[
(D_t-D_z)R=0,
\]
which is exactly~\eqref{eq:transport_shift_flow}.  Thus the ``special substitution'' for $\kappa=1$ is, in Ore-algebra language, the characteristic flow of a first-order differential--shift system.  If the initial series $f$ is D-finite in the standard sense~\cite{Stanley1980}, the translation formula places the entire shifted family inside the usual holonomic/Ore-algebra framework; the Catalan case below is algebraic, hence D-finite.  We use only the elementary translation formula~\eqref{eq:shift_flow_translation} here.

The curve itself is a property of weight space, not of the $\kappa=1$ equation.  For any formal series $\Phi(s_1,s_2,\ldots)$, the same chain-rule calculation gives
\begin{equation}\label{eq:shift_flow_chain_rule}
\frac{d}{dt}\Phi(s_1(t),s_2(t),\ldots)=L\Phi(s_1(t),s_2(t),\ldots).
\end{equation}
A two-dimensional version, used below for the diagonal of $\G_0$, is obtained by freeing the first coordinate.  For a formal series $f$, define
\begin{equation}\label{eq:decoupled_shift_surface}
\iota_f(s,t):=\left(s,f'(t),\frac{f''(t)}{2!},\frac{f^{(3)}(t)}{3!},\ldots\right).
\end{equation}
Equivalently, the weight series of this surface is
\begin{equation}\label{eq:decoupled_shift_series}
S_f(z;s,t):=z\bigl(s+f(z+t)-f(t)\bigr).
\end{equation}
If $\Phi_f:=\Phi\circ\iota_f$, then
\begin{equation}\label{eq:shift_surface_pullback}
(E\Phi)\circ\iota_f=(\Phi_f)_s,
\qquad
(L\Phi)\circ\iota_f=f'(t)(\Phi_f)_s+(\Phi_f)_t.
\end{equation}
Indeed, the second identity is just~\eqref{eq:shift_flow_chain_rule} with the $i=1$ term separated.  Thus any equation written in terms of $E$ and $L$ restricts automatically to a two-variable equation on the surface~\eqref{eq:decoupled_shift_surface}.

Thus a one-dimensional reduction exists for \emph{every} target weight sequence in the $\kappa=1$ problem.  What makes a particular specialization tractable is not the existence of the reduction, but the form of the one-variable function $f$.  For the Catalan weights $w_i=C_{i-1}$ we have $f(t)=\C(t)$, and the entire shifted sequence is encoded by
\begin{equation}\label{eq:Catalan_shift_series}
S(z,t)=z\C(z+t).
\end{equation}
Since $\C(w)=1+w\C(w)^2$, this series satisfies the quadratic identity
\begin{equation}\label{eq:Catalan_shift_algebraic}
(z+t)S(z,t)^2-zS(z,t)+z^2=0.
\end{equation}
The corresponding decoupled surface has the compact form
\[
S_{\C}(z;s,t)=z\bigl(s+\C(z+t)-\C(t)\bigr).
\]

\begin{remark}[A polygon-gluing reading of the Catalan weights]
The specialization $s_i=C_{i-1}$ also admits a topological interpretation.  The number $C_k$ counts side gluings of a $2k$-gon that produce a sphere~\cite[Lemma~13]{Alexeev2016top}.  Thus assigning the weight $C_{i-1}$ to a black-gray $i$-cycle can be viewed as decorating that cycle by a spherical gluing of a $2(i-1)$-gon.  We do not use this decoration in any proof; it is simply another language for the same multiplicative Catalan weighting.
\end{remark}

The general reduction~\eqref{eq:G1_reduced_generic} is independent of any application; at the signed value $u=-1$ the Catalan curve can in fact be solved explicitly, as follows.

\begin{prop}[Signed Catalan evaluation for $\kappa=1$]\label{prop:G1_signed_Catalan}
At $u=-1$ and $s_i=C_{i-1}$,
\begin{equation}\label{eq:G1_signed_Catalan_closed}
\G_1(x,-1,C_0,C_1,C_2,\ldots)
=
-1-\frac12\int_0^x \C(-y)\,dy.
\end{equation}
Equivalently, for every $n\geq1$,
\begin{equation}\label{eq:G1_signed_Catalan_coeff}
2n\,\coef{x^n}\,\G_1(x,-1,C_0,C_1,C_2,\ldots)
=(-1)^n C_{n-1}.
\end{equation}
\end{prop}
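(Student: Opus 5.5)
The plan is to apply the shift-flow reduction of Proposition~\ref{prop:G1_shift_flow} with $f=\C$, since $w_i=C_{i-1}$ gives $f(t)=\sum_k C_k t^k=\C(t)$. Then $H(x,t)=F(x,t)+\int_0^t\C(\tau)\,d\tau$ satisfies
\[
2H_x=H_t^2\bigl(-1+2xH_x\bigr),\qquad H(0,t)=-1+\int_0^t\C(\tau)\,d\tau ,
\]
and the claim~\eqref{eq:G1_signed_Catalan_closed} is equivalent to $H_x(x,0)=-\tfrac12\C(-x)$ together with $H(0,0)=-1$. Solving the PDE for $H_x$ gives
\[
q:=H_x=-\frac{p^2}{2(1-xp^2)},\qquad p:=H_t .
\]
Compatibility $p_x=q_t$ then yields a single first-order quasilinear equation for $p$:
\[
p_x+\partial_t\!\left[\frac{p^2}{2(1-xp^2)}\right]=0,\qquad p(0,t)=\C(t).
\]
Everything is formal in $x$, so this equation determines $p$ uniquely order by order in $x$. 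That matches the uniqueness statement already used in Theorem~\ref{th:G1_PDE}.

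\emph{Reduction to a value along $t=0$.} By the formula for $q$, the target identity $q(x,0)=-\tfrac12\C(-x)$ is equivalent to
\[
\frac{p^2}{1-xp^2}=\C(-x)\quad\text{at }t=0 .
\]
If $p(x,0)=\C(-x)$, this holds: writing $c=\C(-x)$, the Catalan relation gives $c=1-xc^2$, so $1-xc^2=c$ and $p^2/(1-xp^2)=c$. So the whole proposition reduces to showing
\[
H_t(x,0)=\C(-x).
\]
This amounts to a transport statement: the initial Catalan profile $\C(t)$ at $x=0$ is carried to $\C(-x)$ at $t=0$. Integrating $q(x,0)$ from $0$ to $x$ and adding $H(0,0)=-1$ then gives~\eqref{eq:G1_signed_Catalan_closed}. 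Extracting coefficients, and using $2n\cdot\frac{1}{2n}$ together with $\coef{y^{n-1}}\C(-y)=(-1)^{n-1}C_{n-1}$, gives~\eqref{eq:G1_signed_Catalan_coeff}.

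\emph{Main step: solving the quasilinear equation.} I would use characteristics. Along curves with
\[
\frac{dt}{dx}=\frac{\partial}{\partial p}\frac{p^2}{2(1-xp^2)}=\frac{p}{(1-xp^2)^2},
\]
the equation gives
\[
\frac{dp}{dx}=-\partial_x\!\left[\frac{p^2}{2(1-xp^2)}\right]\Big|_{p}=-\frac{p^4}{2(1-xp^2)^2}.
\]
This ODE for $p(x)$ does not involve $t$, so it integrates in closed form. From $d(1/p)/dx=p^2/\bigl(2(1-xp^2)^2\bigr)$, I would substitute $r=1/p^2-x$ to get an explicit relation between $p$ and $x$ along each characteristic. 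A second quadrature then gives $t(x)$ in terms of the starting point $t_0$, with $p(0)=\C(t_0)$.

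Next I would eliminate $t_0$ using the algebraic relation $\C(t_0)=1+t_0\C(t_0)^2$. The expectation is that the characteristic reaching $t=0$ at height $x$ has $p=\C(-x)$. The anticipated structure is that both $1/p-1$ along characteristics and the quadratic $\C(w)=1+w\C(w)^2$ are linear-fractional in the same variables, so the elimination closes.

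This elimination is the step I expect to be the main obstacle. If the characteristic computation becomes messy, my fallback is to verify the ansatz directly. I would guess a closed form $p(x,t)=\C(\phi(x,t))$ with $\phi$ rational, fixed by $\phi(0,t)=t$ and $\phi(x,0)=-x$. I would then check the conservation law using only $\C'=\C^2/(1-2w\C^2)$, a consequence of the Catalan quadratic, and conclude by formal uniqueness in $x$.
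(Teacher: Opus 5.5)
The gap is in your main step: the characteristic system you write down is wrong. Everything before it is correct and matches the paper's proof. That includes applying Proposition~\ref{prop:G1_shift_flow} with $f=\C$, solving for $H_x=-p^2/(2(1-xp^2))$, reducing the claim to $H_t(x,0)=\C(-x)$ via $1-x\C(-x)^2=\C(-x)$, and the final coefficient extraction.

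The error is as follows. In $p_x+\partial_t\Phi(x,p)=0$ with $\Phi=p^2/(2(1-xp^2))$, the flux depends explicitly on the evolution variable $x$ but not on $t$. Hence $\partial_t\Phi=\Phi_p\,p_t$, and the equation is exactly $p_x+p(1-xp^2)^{-2}p_t=0$. A source term would arise only from explicit $t$-dependence of $\Phi$, so $-\partial_x\Phi|_p=-p^4/(2(1-xp^2)^2)$ does not belong in $dp/dx$. Your characteristic ODE therefore describes a different PDE, and the computation you plan cannot produce the target. At first order your system gives $p_x(0,0)=-\C(0)\C'(0)-\tfrac12=-\tfrac32$, so $p(x,0)=1-\tfrac32x+O(x^2)$. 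The true equation gives $p_x(0,0)=-1$, matching $\C(-x)=1-x+O(x^2)$. So the ``expected'' elimination of $t_0$ would not close.

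The correct version is also the simple one, and it is the paper's argument. Here $p$ is constant along characteristics, so $p=\C(t_0)$ and $t=t_0+\int_0^x p(1-\xi p^2)^{-2}\,d\xi=t_0+xp/(1-xp^2)$. On the characteristic reaching $t=0$, substitute $t_0=(p-1)/p^2$ (from $\C(t_0)=1+t_0\C(t_0)^2$) and clear denominators. This gives $(p-1)(1-xp^2)+xp^3=0$, i.e.\ $p=1-xp^2$, whose unique formal solution with $p(0)=1$ is $\C(-x)$.

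Your fallback would not rescue the argument as stated. If $p=\C(\phi)$, then necessarily $\phi=(p-1)/p^2=t-xp/(1-xp^2)=t-x\C(t)+O(x^2)$. Its $x$-coefficient $-\C(t)$ is not rational in $t$, so no rational $\phi$ with your boundary values exists. Only the restriction $\phi(x,0)=-x$ is simple, and obtaining that restriction is exactly the characteristic computation above.
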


\begin{proof}
Apply Proposition~\ref{prop:G1_shift_flow} with $u=-1$ and $f=\C$. Write $p:=H_t$. Equation~\eqref{eq:G1_reduced_generic} becomes
\[
H_x=-\frac{p^2}{2(1-xp^2)}.
\]
Differentiating with respect to $t$ gives the quasilinear equation
\[
p_x+\frac{p}{(1-xp^2)^2}p_t=0.
\]
Hence $p$ is constant along characteristics, and a characteristic leaving the line $x=0$ at $(0,t_0)$ satisfies
\begin{equation}\label{eq:G1_Catalan_characteristic}
t=t_0+\frac{xp}{1-xp^2},
\qquad
p=\C(t_0).
\end{equation}
For the characteristic reaching $t=0$, the Catalan relation $t_0=(p-1)/p^2$ and~\eqref{eq:G1_Catalan_characteristic} give
\[
p=1-xp^2.
\]
The unique formal solution with $p(0)=1$ is therefore $p=\C(-x)$. On $t=0$ one also has $1-xp^2=p$, so
\[
\frac{d}{dx}H(x,0)=H_x(x,0)=-\frac{p}{2}=-\frac12\C(-x).
\]
Since $H(0,0)=-1$ and $H(x,0)=\G_1(x,-1,C_0,C_1,\ldots)$, integration proves~\eqref{eq:G1_signed_Catalan_closed}. Finally, $\C(-x)=\sum_{m\geq0}(-1)^mC_mx^m$, and coefficient extraction yields~\eqref{eq:G1_signed_Catalan_coeff}.
\end{proof}

The evaluation above is intrinsic to the special-breakpoint-graph generating function. It complements the signed $\kappa=0$ Catalan evaluation in Proposition~\ref{prop:Catalan_surface_closed_form}. The two formulas will be used together only later, in Section~\ref{sec:physics_application}, where the application-specific sign convention is introduced.

\section{The projective-plane class \texorpdfstring{$\kappa=0$}{kappa=0}}\label{sec:kappa=0}
The next stratum is topologically different: its canonical surface is the projective plane. We begin with the corresponding M\"obius-ladder/root-removal decomposition and its generating-function equation, then expose a finite-dimensional $\mathfrak{sl}_2$ action behind that equation and use it to derive Catalan reductions and refined coefficient formulas.

\subsection{M\"obius ladders, root removal, and the recursive equation}\label{sec:kappa0_structure}

The case of $\kappa=0$ corresponds to the special breakpoint graphs of order $4n$ with a total number of black-gray and gray-red cycles equal to $2n$. The generating function in this case starts with
\[
\begin{split}
\G_0(x,u,s_1,s_2,\dots) & = s_2 u \frac{x}2 \\
& + \big(4 s_1 s_3 u^2 + 4 s_1^2 s_2 u\big) \frac{x^2}4 \\
& + \big((s_2^3 + 6 s_1 s_2 s_3 + 9 s_1^2 s_4) u^3 + (15 s_1^2 s_2^2 + 18 s_1^3 s_3) u^2 + 9 s_1^4 s_2 u\big) \frac{x^3}6 \\
& + \dots.
\end{split}
\]
For example, in the coefficient of $\frac{x^2}4$ the terms $4 s_1 s_3 u^2$ and $4 s_1^2 s_2 u$ enumerate the permutations 
$$
\{ (3,1,2,4), (4,2,1,3), (1,3,4,2), (2,4,3,1)\}\quad\text{and}\quad \{ (2,1,3,4), (1,2,4,3), (1,3,2,4), (4,2,3,1) \},
$$ 
respectively (Fig.~\ref{fig:example}).

By Proposition~\ref{prop:canonical_surface}, every graph in the $\kappa=0$ stratum is canonically embedded in $\mathbb{RP}^2$.  The unique black-red cycle bounds one disk face, so removing the interior of that face leaves a M\"obius band.  Thus the disk that carries the noncrossing chord picture for $\kappa=1$ is replaced, at $\kappa=0$, by its one-crosscap analogue.  The exceptional family in the next lemma is therefore naturally expected to be a twisted, rather than cylindrical, ladder; the local argument below makes this precise.

\begin{lemma}\label{lem:ladder}
For a positive integer $n$, let $G$ be a special breakpoint graph of order $4n$ with the total number of black-gray and gray-red cycles equal to $2n$. Then either $G$ contains a black-gray $1$-cycle, or $G$ is a M\"obius ladder graph $M_{4n}$ with odd $n$.
\end{lemma}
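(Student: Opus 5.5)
The plan is to use the canonical embedding of Proposition~\ref{prop:canonical_surface}, as in the proof of Lemma~\ref{lem:noncrossing}. Since $\kappa(G)=0$, the surface $\Sigma_G$ is $\mathbb{RP}^2$. The black-red cycle is a single face, a disk. Removing its interior leaves a M\"obius band $B$ whose boundary circle is the black-red cycle. The $2n$ gray edges are pairwise disjoint, properly embedded arcs in $B$ with endpoints at the $4n$ vertices on $\partial B$. Each such arc is either \emph{separating}, cutting off a disk from $B$, or \emph{essential}, a one-sided arc whose complement in $B$ is a single disk.

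First I treat the case where some gray arc is separating. Among the separating arcs I choose an innermost one, $\gamma$. The disk $D$ it cuts off meets $\partial B$ in a boundary segment $\sigma$. Any gray arc inside $D$ lies in a disk, so it is itself separating and would be nested more deeply. Hence $D$ contains no other arc, and the interior of $\sigma$ contains no vertex. So the endpoints of $\gamma$ are consecutive on the black-red cycle, and $\sigma$ is a single black or red edge.

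A red edge is excluded by parity. Both endpoints of a red edge carry the same red-edge number, so they have the same parity. But every gray edge joins an odd vertex to an even one, as noted in the proof of Lemma~\ref{lem:2break}. Therefore $\sigma$ is a black edge parallel to the gray edge $\gamma$, and together they form a black-gray $1$-cycle.

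Next I treat the case where all $2n$ gray arcs are essential. Cutting $B$ along one essential arc gives a disk, and in that disk the remaining arcs must run parallel to the cut arc. Hence all arcs are mutually parallel. Listing the boundary vertices cyclically as $v_1,\dots,v_{4n}$, the arcs then pair $v_p$ with $v_{p+2n}$ for every $p$, with indices mod $4n$. So the gray chords are exactly the $2n$ diameters. Together with the $4n$-cycle, this is the M\"obius ladder $M_{4n}$.

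It remains to get the parity of $n$. Along the cycle, edges alternate black and red, and the red edges are numbered $1,\dots,2n$ in order. I will check that moving $2n$ steps around the cycle crosses $n$ red edges. It therefore shifts the red-edge number by $n$ or by $n\pm1$, depending on the position of $v_p$ relative to its red edge. I expect to show that the opposite-parity requirement for every diameter is consistent only when $n$ is odd. This gives the second alternative.

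The main obstacle I expect is making the M\"obius-band topology rigorous at the combinatorial level. Two points need care: that an innermost separating arc has no interior vertices on its segment, and that disjoint essential arcs are necessarily the full set of diameters. The first should follow because every boundary vertex is an endpoint of some gray arc, all of which lie in $B$. The second follows by cutting along one essential arc and reducing to noncrossing arcs in a disk, exactly as in Lemma~\ref{lem:noncrossing}. If the topological route becomes awkward, my fallback is to argue directly on the chord diagram: $\kappa=0$ with no $1$-cycles forces every pair of chords to cross, and a family of pairwise crossing chords on a cycle is a set of diameters.
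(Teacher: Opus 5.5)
Your argument is correct and takes a genuinely different route from the paper.

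\textbf{The paper's route.} It proceeds by counting. Each gray-red cycle contains an even positive number, hence at least $2$, of gray edges. With $2n$ gray edges and $c_{BG}+c_{GR}=2n$, the absence of black-gray $1$-cycles forces every black-gray and gray-red cycle to be a $2$-cycle. The paper then performs a surgery on the root's black-gray $2$-cycle, replacing its two black edges by copies of its gray edges. It excludes one reconnection by a parity count, lands in the $\kappa=1$ stratum, invokes Lemma~\ref{lem:noncrossing} to get an ordinary ladder, and re-twists it.

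\textbf{Your route.} You stay on the M\"obius band instead. An innermost separating gray arc has consecutive endpoints, and the parity (equivalently, sign) condition excludes a red edge, so it closes a black-gray $1$-cycle. If every arc is nonseparating, cutting along one gives a rectangle. Each remaining arc must join the two sides coming from $\partial B$, since an arc with both ends on one side would cut off a disk of $B$. Disjointness makes this matching order preserving, and the twisted gluing turns it into the antipodal matching.

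\textbf{Comparison.} Your version makes the ``twisted ladder'' heuristic stated before the lemma literally precise. It also avoids both the surgery and the second use of the $\kappa=1$ characterization. The paper's version needs no classification of arcs in a M\"obius band. It also yields at once the all-$2$-cycle structure used for the boundary term in Theorem~\ref{th:G0_PDE}; in your version this has to be read off from the diameters afterwards, which is routine.

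\textbf{The parity step.} You left the parity of $n$ open, and it needs a sharper statement than a shift of ``$n$ or $n\pm1$''. If some diameter really shifted the red-edge number by $n\pm1$, your conclusion would fail. In fact the shift is always exactly $n$. Index the cycle so that red edge number $k$ is $v_{2k-1}v_{2k}$. Then $v_p$ lies on red edge $\lceil p/2\rceil$ and $v_{p+2n}$ lies on red edge $\lceil p/2\rceil+n$, read modulo $2n$. Since $2n$ is even, parity is well defined modulo $2n$. So every diameter joins vertices of opposite parity if and only if $n$ is odd, which completes your argument.
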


\begin{proof} Note that each of $2n$ gray edges is present in one black-gray and one gray-red cycle. Furthermore, each gray-red cycle contains an even number of gray edges, and thus has at least 2 gray edges. It follows that if there are no black-gray 1-cycles, then all black-gray and gray-red cycles are exactly 2-cycles. We will show that in this case $G$ represents a M\"obius ladder $M_{4n}$ with odd $n$.

Let $\{a,b\}$ and $\{c,d\}$ be the black edges of the black-gray cycle $C$ in $G$ that contains the root $a$. Without loss of generality, we assume that along the black-red cycle in $G$ these edges are directed as $(a,b)$ and $(c,d)$. 
Let graph $G'$ be obtained from $G$ by replacing black edges $\{a,b\}$ and $\{c,d\}$ with black edges parallel to the gray edges in $C$.

If the gray edges in $C$ are $\{a,d\}$ and $\{b,c\}$, then $G'$ represents the union of two special breakpoint graphs, each of which has a single black-gray 1-cycle and the other black-gray cycles are 2-cycles. It follows however that each of these graphs contains an odd number of black edges, which is impossible. 

Hence, the gray edges in $C$ are $\{a,c\}$ and $\{b,d\}$, and thus $G'$ represents a special breakpoint graph of order $4n$ with the total number of black-gray and gray-red cycles equal to $2n+1$. Clearly, these cycles are two black-gray 1-cycles (formed by double edges $\{a,c\}$ and $\{b,d\}$), $n-1$ black-gray 2-cycles, and $n$ gray-red 2-cycles. By Lemma~\ref{lem:noncrossing}, the graph $G'$ is noncrossing. Removing from it the black edges $\{a,c\}$ and $\{b,d\}$ results in a ladder graph, where the ``sides'' are formed by black-red paths, while the ``crossbars'' are formed by the gray edges. Since each side contains $n$ red edges and starts and ends with red edges of the same oddness (imposed by $G$), it follows that $n$ is odd. Then restoring the graph $G$ by adding back black edges $\{a,b\}$ and $\{c,d\}$ essentially twists the ladder into a M\"obius ladder graph $M_{4n}$. 
\end{proof}

\begin{thm}\label{th:G0_PDE}
The generating function $\G_0(x,u,s_1,s_2,\dots)$ satisfies
\begin{equation}\label{eqn:pde_kappa_0}
\frac{\partial \G_0}{\partial s_1}
=
x\sum_{i\geq1}i\left(
 u s_{i+1}+\sum_{j=1}^i s_j s_{i+1-j}
\right)\frac{\partial \G_0}{\partial s_i},
\end{equation}
with boundary value
\begin{equation}\label{g0bound}
\G_0(x,u,0,s_2,s_3,\ldots)
=
\frac14\log\frac{1+us_2x}{1-us_2x}.
\end{equation}
\end{thm}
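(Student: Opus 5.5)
The boundary value and the PDE will be handled separately. For the boundary value, set $s_1=0$. This keeps only graphs with no black-gray $1$-cycle. By Lemma~\ref{lem:ladder}, these are exactly the M\"obius ladders $M_{4n}$ with $n$ odd. In such a graph all $n$ black-gray cycles and all $n$ gray-red cycles are $2$-cycles, so each contributes the monomial $u^n s_2^n x^n/(2n)$. It remains to count the rooted copies.

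I would argue that for each odd $n$ there is exactly one rooted special breakpoint graph of this shape. The reason is that the dihedral re-rooting group $D_{2n}$ of order $4n$ acts transitively and its stabilizer is the full automorphism group, which has order $4n$ as in the proof of Lemma~\ref{lem:gfi}. This is consistent with the $n=1$ term $s_2u\,x/2$. Hence $\G_0(x,u,0,s_2,\dots)=\sum_{n\text{ odd}}(us_2x)^n/(2n)$, which is exactly \eqref{g0bound}.

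For the PDE, I would first reinterpret the left side. By Lemma~\ref{lem:gfi} with $i=1$, $s_1\partial_{s_1}\G_0=\G_0^{(1)}$. So $\partial_{s_1}\G_0$ is the series, without the $1/(2n)$ normalization, of graphs rooted at a vertex of a black-gray $1$-cycle, with that cycle's weight $s_1$ deleted. Such a graph corresponds to a fixed point of $\tau$ at the root position.

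I would then build a bijection between these rooted graphs of order $4n$ and pairs (graph of order $4n-4$ in the $\kappa=0$ stratum, distinguished vertex on a black-gray $i$-cycle together with one of its $i$ positions). The factor $x$ accounts for the drop from $n$ to $n-1$. The factor $i\,\partial_{s_i}$ accounts for the marked position on an $i$-cycle, exactly as for the operator $L$ in \eqref{eq:L_cycle_lengthening}.

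The forward map removes the root's black-gray $1$-cycle together with the red-adjacent black edge. Removing two consecutive black edges is what preserves the parity convention, since a single fixed point cannot be removed alone. The loose gray and red ends are then reconnected as in the contractions used in the proof of Theorem~\ref{th:G1_PDE}. The black-gray cycle through the neighbouring black edge loses one black edge. Using Lemma~\ref{lem:2break} on the forced re-pairing, the local picture then falls into two cases, matching the two terms of the bracket:

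\begin{itemize}
\item The cycle simply shortens from length $i+1$ to length $i$, and one gray-red cycle disappears. This gives $u\,s_{i+1}$.
\item The cycle is the merge of a $j$-cycle and an $(i+1-j)$-cycle that were split in the larger graph, while the gray-red count is unchanged. This gives $s_j s_{i+1-j}$.
\end{itemize}

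In both cases I would check that $c_{BG}+c_{GR}$ drops by exactly $2$, so $\kappa=0$ is preserved. The inverse map reinserts a fixed point and an adjacent black edge at the marked position. The splitting index $j$ ranges over $1,\dots,i$ according to where the new gray edge is attached along the marked $i$-cycle.

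The main obstacle is this last step: proving that every reinsertion stays in the $\kappa=0$ stratum and that each $(i,j)$ choice is realized exactly once. I expect the cleanest route is the surface viewpoint of Proposition~\ref{prop:canonical_surface}. The insertion is a local surgery near one triangle pair that preserves $\chi$. This makes $\chi=1$ automatic, and only the cycle bookkeeping remains, which can then be checked against the listed coefficients for $n\le3$.

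The PDE together with \eqref{g0bound} determines $\G_0$ uniquely. Indeed, comparing powers of $s_1$ shows that $\partial_{s_1}$ at $x$-degree $n$ is expressed through lower $x$-degree, so the coefficients are fixed recursively once the $s_1=0$ slice is known.
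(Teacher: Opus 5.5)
Your boundary-value argument matches the paper's. One caveat: Lemma~\ref{lem:gfi} only shows that the re-rooting action is free. Uniqueness of the rooted odd M\"obius ladder also needs the separate check that its antipodal gray chords are invariant under all $4n$ color-preserving symmetries of the black-red cycle.

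The PDE part, however, has a genuine gap: the forward map you describe is not the right one. Deleting the root's $1$-cycle together with \emph{one} red-adjacent black edge contracts that position out of its permutation cycle. That never merges two black-gray cycles, so it cannot produce the $s_js_{i+1-j}$ terms you list, and it can leave the $\kappa=0$ stratum. For instance, $\tau=(1,3,2,4)$ is one of the permutations counted by $4s_1^2s_2u$. Rooting at the fixed point $1$ and deleting positions $1$ and $2$ leaves the identity of $S_2$, which has $c_{BG}+c_{GR}=3$, i.e.\ $\kappa=1$. Choosing the other neighbour fails in the same way on $\tau=(1,2,4,3)$.

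The paper's map uses \emph{both} red neighbours of the $1$-cycle. Let $\{a,u\}$ be the $1$-cycle, $\{a,b\},\{u,v\}$ its red edges, and $\{b,c\},\{v,w\}$ the two flanking black edges. The map deletes $v,u,a,b$ and fuses the flanking black edges into one black edge $\{c,w\}$, with new root $c$. It then joins the freed gray ends $s,t$ of $b,v$ by a gray edge $\{s,t\}$, unless $\{b,v\}$ was itself gray. Only this fusion yields the dichotomy you state. Either $\{b,v\}$ is gray, an $(i+1)$-cycle shortens and a gray-red $2$-cycle disappears ($us_{i+1}$). Or a $j$-cycle and an $(i+1-j)$-cycle merge with the gray-red count unchanged ($s_js_{i+1-j}$).

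Even with the correct map, the claim that $c_{BG}+c_{GR}$ drops by exactly $2$ is the real content. You defer it to an unproved assertion that the surgery ``preserves $\chi$'' plus a check for $n\le3$; neither is a proof, and the example above shows $\chi$-preservation is not automatic. The missing idea is an exclusion: when $\{s,t\}\neq\{b,v\}$, the gray edges $\{b,s\}$ and $\{v,t\}$ must lie in \emph{different} black-gray cycles of $G$. Suppose they shared a cycle. Re-pairing them as $\{b,v\},\{s,t\}$ would then, by Lemma~\ref{lem:2break}, add one black-gray cycle and one gray-red cycle, giving $\kappa=2$. This contradicts $\kappa\le1$ from Proposition~\ref{prop:canonical_surface}; equivalently, the fusion would split rather than merge and $G'$ itself would have $\kappa=2$.

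With this exclusion the forward map lands in the $\kappa=0$ stratum, and the reverse count is clean. A graph $G'$ rooted at $c$ on an $i$-cycle is exactly what $i\,\partial_{s_i}\G_0$ counts, by Lemma~\ref{lem:gfi}, so you should not add a further choice of ``one of its $i$ positions''. Each such $G'$ has one preimage of type $us_{i+1}$. It also has $i$ preimages of type $s_js_{i+1-j}$, one for each gray edge of the root's cycle taken as $\{s,t\}$, with parity deciding which endpoint attaches to $b$.
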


\begin{proof} By Lemma~\ref{lem:ladder}, the special breakpoint graphs enumerated by $\G_0$ are either M\"obius ladders $M_{4n}$ with odd $n$, or contain a black-gray 1-cycle. 

The M\"obius ladder $M_{4n}$ with odd $n$ contains $n$ black-gray cycles and $n$ gray-red cycles, each of which is a 2-cycle. Furthermore, by symmetry, such a rooted graph is unique for each $n$. It follows that such graphs contribute to $\G_0$ the term:
$$\sum_{\substack{n\geq1\\ n\text{ odd}}} \frac{x^n}{2n} u^n s_2^n = 
\frac14\log \frac{1+us_2x}{1-us_2x}.$$

Let $S_n$ be the set of special breakpoint graphs of order $4n$ with the total number of black-gray and gray-red cycles equal to $2n$, and let $S'_n\subset S_n$ be the subset of those graphs where the root belongs to a black-gray $1$ cycle. We will define a transformation of the graphs in $S'_n$ into graphs in $S_{n-1}$ as follows. 

Let $G$ be a graph from $S'_n$ rooted at a vertex $a$, and let $u$ be the other vertex in the black-gray $1$-cycle containing $a$. Let $(u,v,w)$ and $(a,b,c)$ be the adjacent red-black paths, and $\{b,s\}$ and $\{v,t\}$ be gray edges. 
We construct a graph $G'$ from $G$ by removing the vertices $v,u,a,b$ along with their incident edges, adding a black edge $\{c,w\}$, and marking the vertex $c$. In the case $\{s,t\}\ne \{b,v\}$ we also add a gray edge $\{s,t\}$ (Fig.~\ref{fig:k0transform}).
Below we will show that $G'\in S_{n-1}$, and if $\{s,t\}\ne \{b,v\}$ then the black edge $\{c,w\}$ and the gray edge $\{s,t\}$ belong to the same black-gray cycle in $G'$.

\begin{figure}[!t]
    \centering
    \includegraphics[width=.75\linewidth]{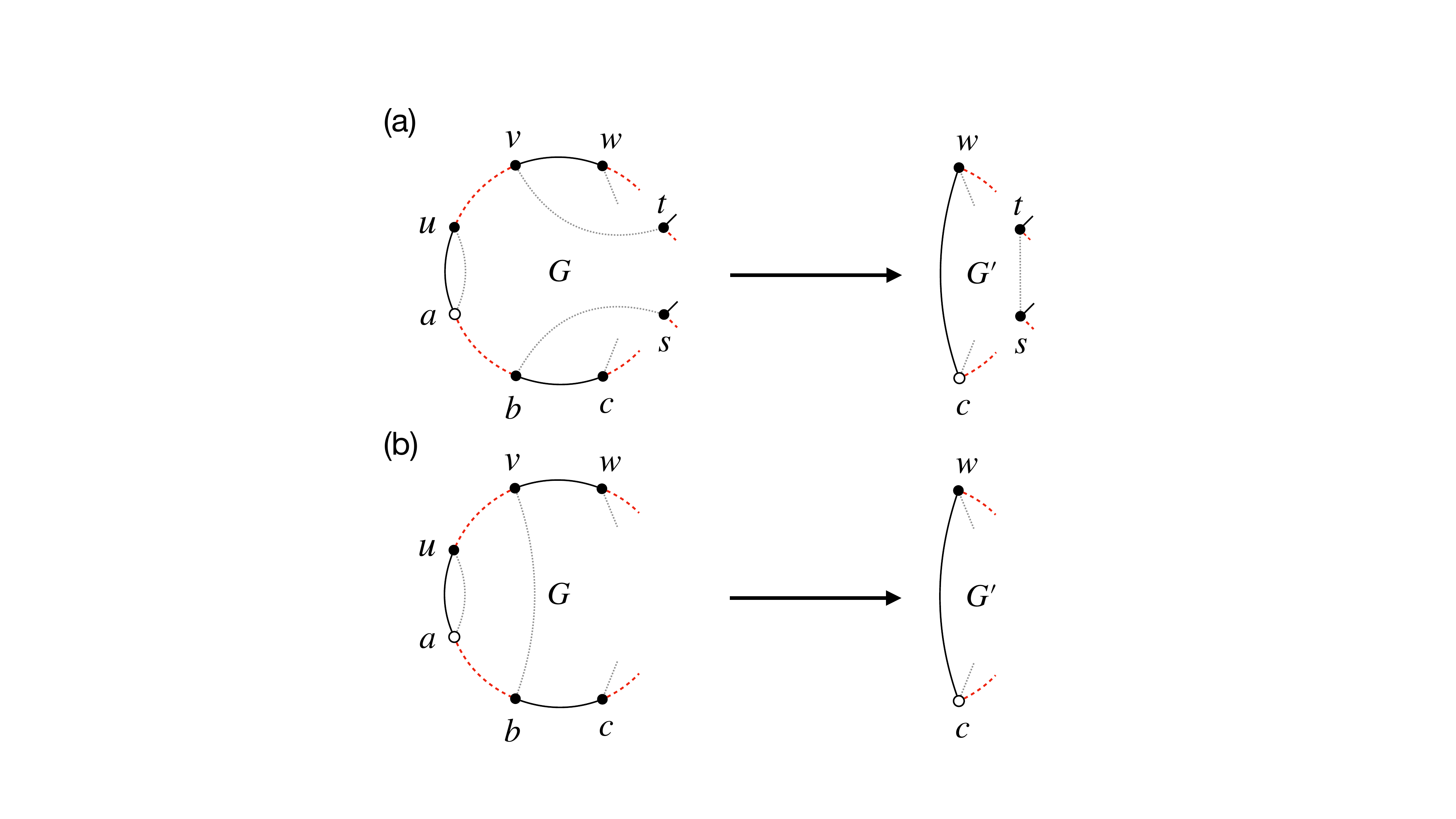}
    \caption{A transformation of a special breakpoint graph of order $4n$ rooted at vertex $a$ in a black-gray $1$-cycle into a special breakpoint graph of order $4(n-1)$. \textbf{(a)}  Case $\{s,t\}\ne \{b,v\}$. \textbf{(b)} Case $\{s,t\}=\{b,v\}$.}
    \label{fig:k0transform}
\end{figure}

\paragraph{Case I: $\{s,t\}=\{b,v\}$.} In this case, the graph $G'$ contains one fewer gray-red cycle and one fewer black-gray cycle as compared to $G$, and thus $G'\in S_{n-1}$. Furthermore, if black edge $\{w,c\}$ belongs to black-gray $i$-cycle in $G'$, then in $G$ it expands into a black-gray $(i+1)$-cycle containing black edges $\{b,c\}$ and $\{v,w\}$. 

\paragraph{Case II: $\{s,t\}\ne \{b,v\}$.} In this case, gray edges $\{b,s\}$ and $\{v,t\}$ in $G$ are distinct and belong to the same gray-red cycle, but must belong to distinct black-gray cycles, implying that $G'\in S_{n-1}$. Indeed, if they belong to the same black-gray cycle, we consider a special breakpoint graph $G''$ obtained from $G$ by replacing these edges with gray edges $\{b,v\}$ and $\{s,t\}$. The graph $G''$ has one more gray-red cycle and one more black-gray cycle (by Lemma~\ref{lem:2break}) as compared to $G$, and thus the total number of black-gray and gray-red cycles in $G''$ is $2n+2$. This would give $\kappa(G'')=2$, contradicting Proposition~\ref{prop:canonical_surface}, which gives $\kappa\leq1$. Thus, gray edges $\{b,s\}$ and $\{v,t\}$ belong to distinct black-gray cycles in $G$. By Lemma~\ref{lem:2break}, it follows that black edge $\{c,w\}$ and gray edge $\{s,t\}$ in $G'$ belong to the same black-gray $i$-cycle for some $i\geq 1$. In graph $G$, this cycle is replaced with a black-gray $j$-cycle and an $(i+1-j)$-cycle for $1\leq j\leq i$, containing gray edges $\{b,s\}$ and $\{v,t\}$, respectively. The parity argument in Lemma~\ref{lem:2break} also shows that these splitting configurations exhaust the same-cycle reconnections: the alternative ``twist'' pairing would join vertices incident with red edges of equal parity and therefore would not define a special breakpoint graph.  

Our goal is to enumerate the pairs $(G,G')$ with $G\in S'_n$ and $G'\in S_{n-1}$ constructed from $G$ as described above, along with their cycle structures, in two different ways.
On the one hand, such pairs are uniquely defined by graphs $G\in S'_n$, which by Lemma~\ref{lem:gfi} are enumerated by the coefficient of $x^n$ in
$$
s_1\frac{\partial}{\partial s_1} \G_0(x,u,s_1,s_2,\dots). 
$$
On the other hand, such pairs can be constructed from a fixed graph $G'$, allowing to enumerate the corresponding $G$ by the coefficients of $x^n$ in
$$
x s_1 \sum_{i\geq 1} i  \left( u s_{i+1} + \sum_{j=1}^i s_j s_{i+1-j}\right) \frac{\partial \G_0}{\partial s_i}
$$
where 
\begin{itemize}
\item
$i$ stands for the number of black edges in the black-gray cycle containing the root of $G'$;
\item $i\frac{\partial \G_0}{\partial s_i}$ produces the factor for the number of ways to select such a cycle (by Lemma~\ref{lem:gfi}) and also decreases the power of $s_i$ by 1;
\item the term $us_{i+1}$ accounts for the case I (Fig.~\ref{fig:k0transform}b);
\item the sum $\sum_{j=1}^i s_j s_{i+1-j}$ corresponds to case II (Fig.~\ref{fig:k0transform}a) and accounts for the ways to pick a gray edge in the same black-gray cycle and split/expand this cycle into a black-gray $j$-cycle and an $(i+1-j)$-cycle in $G$.
\end{itemize}

It follows that equation~\eqref{eqn:pde_kappa_0} holds. Setting $s_1=0$ leaves only graphs with no black-gray $1$-cycle, so the M\"obius-ladder contribution computed at the beginning of the proof is exactly the boundary value~\eqref{g0bound}. Conversely, equation~\eqref{eqn:pde_kappa_0} determines the coefficients recursively in the $x$-degree once this boundary value is fixed, so the PDE and boundary condition characterize $\G_0$ as a formal power series.
\end{proof}

\begin{corol}[Recursive integral form]\label{cor:G0_integral_form}
Equivalently, the $\kappa=0$ generating function satisfies
\begin{equation}\label{eq:G0_integral_recursive}
\G_0(x,u,s_1,s_2,\ldots)
=
\frac14\log\frac{1+us_2x}{1-us_2x}
+x\int {\rm d}s_1\,
\sum_{i\geq1}i\left(
 u s_{i+1}+\sum_{j=1}^i s_j s_{i+1-j}
\right)\frac{\partial\G_0}{\partial s_i}.
\end{equation}
Here $\int {\rm d}s_1$ denotes the formal antiderivative in $s_1$ with zero $s_1$-constant term.  The first term is the contribution of the exceptional odd M\"obius ladders, while the antiderivative term is the contribution of graphs containing a rooted black-gray $1$-cycle.
\end{corol}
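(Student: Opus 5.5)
The plan is to integrate equation~\eqref{eqn:pde_kappa_0} of Theorem~\ref{th:G0_PDE} formally in $s_1$, using~\eqref{g0bound} as the constant of integration. We work in the ring of formal power series in $x$ whose coefficients are polynomials in $u,s_1,s_2,\ldots$. Every $\G_0$ coefficient of $x^n$ is a polynomial in these variables: $n\ge1$, and the monomials have bounded degree.

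For any such series $F$, write $F=F|_{s_1=0}+\int {\rm d}s_1\,\partial F/\partial s_1$, where $\int {\rm d}s_1$ is the formal antiderivative with zero $s_1$-constant term. This identity holds coefficientwise, since for a polynomial in $s_1$ the antiderivative of its derivative recovers everything except the $s_1^0$ part.

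Apply this to $F=\G_0$. Substitute~\eqref{eqn:pde_kappa_0} for $\partial\G_0/\partial s_1$ and~\eqref{g0bound} for $\G_0|_{s_1=0}$. Then pull the factor $x$ out of the antiderivative, which is legitimate because $x$ does not involve $s_1$. The result is exactly~\eqref{eq:G0_integral_recursive}.

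Two remarks complete the equivalence. First, the operator $\int{\rm d}s_1$ acts coefficientwise in $x$, and for each fixed $x$-degree the sum over $i$ is finite (only $s_i$ with $i\le 2n$ occur). Hence the right-hand side is a well-defined formal series. Second, the converse direction holds: differentiating~\eqref{eq:G0_integral_recursive} in $s_1$ returns the PDE, since the logarithmic term is free of $s_1$. Setting $s_1=0$ kills the antiderivative term and returns~\eqref{g0bound}. So the two formulations are equivalent.

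Uniqueness, if we want the integral form to characterize $\G_0$ on its own, follows by induction on $x$-degree. The integral term carries an explicit factor $x$, so the coefficient of $x^n$ on the right depends only on lower coefficients of $\G_0$.

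The combinatorial reading then follows from the proof of Theorem~\ref{th:G0_PDE}. Setting $s_1=0$ isolates the graphs with no black-gray $1$-cycle, which by Lemma~\ref{lem:ladder} are the odd M\"obius ladders. The remaining monomials, those with positive $s_1$-degree, are exactly the graphs containing a $1$-cycle, and these are produced by the antiderivative.

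The only delicate point is the formal justification of $\int{\rm d}s_1$ together with this convergence-by-degree bookkeeping, and that is routine.
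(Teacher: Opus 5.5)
Your proposal is correct and follows the paper's proof. Both apply the zero-constant antiderivative in $s_1$ to the PDE of Theorem~\ref{th:G0_PDE}, add back the boundary value~\eqref{g0bound} as the $s_1=0$ part, and read off the combinatorial meaning from Lemma~\ref{lem:ladder} and the root-removal construction. Your extra bookkeeping is sound and only makes explicit what the paper leaves implicit: coefficientwise well-definedness, the converse direction, and uniqueness by induction on $x$-degree.
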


\begin{proof}
Apply the zero-constant formal antiderivative $\int {\rm d}s_1$ to~\eqref{eqn:pde_kappa_0} and then use the boundary value~\eqref{g0bound}.  The combinatorial interpretation of the two terms is exactly the dichotomy in Lemma~\ref{lem:ladder} and the root-removal construction in the proof of Theorem~\ref{th:G0_PDE}.
\end{proof}

\subsection{An \texorpdfstring{$\mathfrak{sl}_2$}{sl2} structure}

The $\kappa=0$ equation is governed by several first-order derivations rather than the single shift direction of the spherical class.  Their Lie closure is nevertheless finite-dimensional, which turns the previously special-looking substitutions into canonical low-dimensional orbits.

The differential form of Theorem~\ref{th:G0_PDE} is
\begin{equation}\label{eq:G0_EV}
E\G_0=xV\G_0,
\end{equation}
where, in addition to the derivation $E:=\partial/\partial s_1$ already used in Subsection~\ref{sec:shift_flow}, we introduce the derivations
\begin{equation}\label{eq:E_D_V_def}
D:=\sum_{i\geq1} i s_i\frac{\partial}{\partial s_i},
\qquad
V:=\sum_{i\geq1}i\left(us_{i+1}+\sum_{j=1}^i s_j s_{i+1-j}\right)\frac{\partial}{\partial s_i}.
\end{equation}
We regard $u$ as a parameter for these derivations.

\begin{thm}[Lie closure of the $\kappa=0$ operators]\label{th:sl2}
The derivations $E,D,V$ satisfy
\begin{equation}\label{eq:sl2_commutators}
[E,V]=2D,
\qquad
[D,E]=-E,
\qquad
[D,V]=V.
\end{equation}
Equivalently, $V,-E,2D$ satisfy the standard $\mathfrak{sl}_2$ commutation relations.  Hence the Lie algebra generated by the two vector fields appearing in~\eqref{eq:G0_EV} is three-dimensional.
\end{thm}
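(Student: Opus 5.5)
The plan is to compute the three commutators directly, using the standard formula for the bracket of first-order derivations. Write a derivation in the variables $s_1,s_2,\ldots$ as $X=\sum_k X^k\,\partial/\partial s_k$. Then
\[
[X,Y]=\sum_k\bigl(X(Y^k)-Y(X^k)\bigr)\frac{\partial}{\partial s_k},
\]
and this is legitimate for the infinite sums here because each coefficient is a polynomial in finitely many $s_j$, so every expression acts coefficientwise on formal series. The coefficient sequences are
\[
e_k=\delta_{k1},\qquad d_k=k s_k,\qquad v_k=k\Bigl(us_{k+1}+\sum_{j=1}^k s_js_{k+1-j}\Bigr),
\]
with $u$ a constant annihilated by all three derivations.

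\textbf{Step 1: $[E,V]=2D$.} Since $e_k$ is constant, $V(e_k)=0$, so the $k$th coefficient of $[E,V]$ is $\partial v_k/\partial s_1$. The term $us_{k+1}$ does not involve $s_1$. In $\sum_j s_js_{k+1-j}$ the variable $s_1$ occurs only in the terms $j=1$ and $j=k$, each equal to $s_1s_k$. For $k=1$ these coincide as the single term $s_1^2$, and its derivative is $2s_1$ in either reading. In all cases $\partial v_k/\partial s_1=2ks_k=2d_k$.

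\textbf{Step 2: $[D,E]=-E$.} The $k$th coefficient is $D(e_k)-E(d_k)=0-\partial(ks_k)/\partial s_1=-\delta_{k1}$.

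\textbf{Step 3: $[D,V]=V$.} Here I would use a grading argument rather than expand. Give $s_i$ weight $i$ and $u$ weight $0$. Then $D$ is the Euler operator for this grading, so $D(m)=w\,m$ for any monomial $m$ of weight $w$. Every term of $v_k$ has weight $k+1$: both $s_{k+1}$ and each $s_js_{k+1-j}$ qualify. Hence $D(v_k)=(k+1)v_k$, while $V(d_k)=kV(s_k)=kv_k$. The difference is $v_k$, as claimed.

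\textbf{Step 4: the $\mathfrak{sl}_2$ relations and dimension.} Set $e=V$, $f=-E$, $h=2D$. The three identities give
\[
[h,e]=2V=2e,\qquad [h,f]=2E=-2f,\qquad [e,f]=[E,V]=h.
\]
To show the span is three-dimensional, observe that $E$, $D$, $V$ are nonzero and homogeneous of degrees $-1$, $0$, $+1$ for the weight grading. They are therefore linearly independent, and by Steps 1--3 their span is closed under brackets, so it is the Lie algebra generated by $E$ and $V$.

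The only point needing care is the boundary case $k=1$ in Step 1, where the two $s_1$-terms of the convolution merge into one. The rest is routine.
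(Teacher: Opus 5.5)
Your proof is correct and follows the paper's own argument essentially line by line. The bracket $[E,V]=2D$ comes from $\partial A_i/\partial s_1=2s_i$, including the merged $i=1$ term, and the other two relations follow from the weighted homogeneity of $s_1$ and of the coefficients $A_i$. Your linear-independence argument, using the homogeneity degrees $-1,0,+1$ of $E,D,V$, is a small but useful addition, since the paper asserts three-dimensionality without spelling out that step.
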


\begin{proof}
Only the first commutator requires more than homogeneity.  Put
\[
A_i:=us_{i+1}+\sum_{j=1}^i s_j s_{i+1-j}.
\]
Since $\partial A_i/\partial s_1=2s_i$, including the case $i=1$, we obtain
\[
[E,V]=\sum_{i\geq1}2i s_i\frac{\partial}{\partial s_i}=2D.
\]
The relation $[D,E]=-E$ follows immediately from the weight of $s_1$.  Every monomial in $A_i$ has weighted degree $i+1$ for the grading $\deg s_j=j$, so
\[
\left[D,A_i\frac{\partial}{\partial s_i}\right]
=((i+1)-i)A_i\frac{\partial}{\partial s_i},
\]
and summing over $i$ gives $[D,V]=V$.
\end{proof}

The finite Lie closure has an immediate consequence for the infinite-variable equation.

\begin{corol}[Finite-dimensional orbit reduction]\label{cor:sl2_orbit}
Fix $u$ and a weight sequence $\mathbf w:=(w_1,w_2,\ldots)$.  Let $\mathcal O_{\mathbf w}$ denote the formal orbit through $\mathbf w$ generated by the flows of $E,D,V$.  Then $\mathcal O_{\mathbf w}$ has dimension at most three, and the restriction of $\G_0$ to $\mathcal O_{\mathbf w}$ satisfies a first-order PDE in at most three auxiliary variables.

More generally, if a nonzero element of $\operatorname{span}\{E,D,V\}$ annihilates $\mathbf w$, then the orbit through $\mathbf w$ has dimension at most two.  Thus a nontrivial stabilizer of the target weight sequence produces an automatic dimension drop in the reduction of~\eqref{eq:G0_EV}.
\end{corol}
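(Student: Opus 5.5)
The plan is to treat the statement as a formal Frobenius-type consequence of Theorem~\ref{th:sl2}. Since $[E,V]=2D$, $[D,E]=-E$ and $[D,V]=V$, the span $\mathfrak g:=\operatorname{span}\{E,D,V\}$ is closed under brackets. Its flows therefore generate, at least formally (as power series in the flow parameters), the action of a three-dimensional local group on weight space. Concretely, I will parametrize the orbit by composing the three flows,
\[
\Psi(a,b,c):=e^{aV}e^{bD}e^{cE}\,\mathbf w .
\]
Here each exponential is interpreted coefficientwise as a formal series in its parameter. The flows of $E$ and $D$ are explicit: $E$ translates $s_1$, and $D$ scales $s_i\mapsto e^{ib}s_i$. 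The flow of $V$ is defined by its Taylor series $\sum a^k V^k s_i/k!$.

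\textbf{Step 1: the orbit is a three-parameter family.} By the Baker--Campbell--Hausdorff formula for a finite-dimensional Lie algebra, any product of flows of elements of $\mathfrak g$ can be rewritten in this normal form. The set swept out by all such compositions thus coincides with the image of $\Psi$, which is a family depending on at most three parameters. This gives $\dim\mathcal O_{\mathbf w}\le 3$.

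\textbf{Step 2: the PDE restricts to the orbit.} Set $\Phi(a,b,c):=\G_0\circ\Psi(a,b,c)$. Pulling back along $\Psi$, each of $E,D,V$ acts on $\Phi$ as a first-order differential operator in $(a,b,c)$; this is the standard fact that left-invariant fields on the group chart push forward to $\mathfrak g$. For the chosen ordering, the vector field $V$ corresponds to $\partial_a$. Conjugating $D$ past $e^{aV}$ via $[D,V]=V$ shows that $D$ corresponds to $\partial_b - a\partial_a$. Similarly, $E$ corresponds to a combination of $\partial_a$, $\partial_b$, $\partial_c$ whose coefficients are polynomial or exponential in $(a,b)$; these are obtained from $\mathrm{Ad}$ in the $\mathfrak{sl}_2$ representation. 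Substituting these into $E\G_0=xV\G_0$ yields a linear first-order PDE for $\Phi$ in the variables $(a,b,c)$, with $x$ and $u$ as parameters. At $a=b=c=0$ this recovers the specialization of $\G_0$ at $\mathbf w$.

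\textbf{Step 3: the stabilizer statement.} Suppose $0\ne Y\in\mathfrak g$ satisfies $Y(\mathbf w)=0$. Then the isotropy subalgebra at $\mathbf w$ is nonzero, so the tangent map $\mathfrak g\to T_{\mathbf w}$ has rank at most two. I will choose a complement to $\mathbb C Y$, spanned by some $Y_1,Y_2$. Along orbits, the ranks of the infinitesimal action at $g\mathbf w$ equal those at $\mathbf w$ after applying $\mathrm{Ad}_g$. Hence the orbit is swept out by $e^{\alpha Y_1}e^{\beta Y_2}\mathbf w$, modulo the stabilizer, and has dimension at most two. Running Step 2 on this two-parameter map then gives a PDE in two variables.

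The main obstacle is making ``orbit'' and ``dimension'' rigorous in infinitely many formal variables, where no manifold structure is available. I would sidestep this by defining the orbit as the image of the formal map $\Psi$. Dimension is then understood as the number of parameters, or the rank of the Jacobian, so every claim reduces to the explicit BCH/adjoint identities in $\mathfrak{sl}_2$, which are finite computations.
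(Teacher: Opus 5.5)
Your proposal is correct and follows the same route as the paper. Theorem~\ref{th:sl2} makes $\operatorname{span}\{E,D,V\}$ three-dimensional, so the orbit carries at most three formal coordinates, $E\G_0=xV\G_0$ pulls back to a first-order PDE in them, and a nonzero stabilizer element removes one parameter.

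The only difference is that you make the paper's ``choose formal local coordinates'' explicit through the second-kind chart $e^{aV}e^{bD}e^{cE}\mathbf w$. Your adjoint computations check out: $V\leftrightarrow\partial_a$, $D\leftrightarrow\partial_b-a\partial_a$, and $E\leftrightarrow e^{-b}\partial_c-2a\partial_b+a^2\partial_a$. In the stabilizer case, placing the stabilizer element $Y$ innermost in the chart justifies absorbing it, since then $e^{\gamma Y}\mathbf w=\mathbf w$.
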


\begin{proof}
By Theorem~\ref{th:sl2}, the Lie algebra generated by the vector fields $E$ and $V$ appearing in~\eqref{eq:G0_EV} is the three-dimensional space $\operatorname{span}\{E,D,V\}$.  Hence the formal orbit through any point has dimension at most three.  Choose formal local coordinates $\theta_1,\ldots,\theta_d$ on such an orbit, where $d\leq3$.  Since $E$ and $V$ are tangent to the orbit, their restrictions have the form
\[
E=\sum_{j=1}^d e_j(\boldsymbol\theta)\frac{\partial}{\partial\theta_j},
\qquad
V=\sum_{j=1}^d v_j(\boldsymbol\theta)\frac{\partial}{\partial\theta_j}.
\]
Pulling back~\eqref{eq:G0_EV} to the orbit therefore gives
\[
\sum_{j=1}^d e_j(\boldsymbol\theta)\frac{\partial \G_0}{\partial\theta_j}
=x\sum_{j=1}^d v_j(\boldsymbol\theta)\frac{\partial \G_0}{\partial\theta_j},
\]
a first-order PDE in $d\leq3$ auxiliary variables.  If a nonzero Lie-algebra element annihilates $\mathbf w$, then the stabilizer at $\mathbf w$ has positive dimension, so the orbit dimension is at most two.
\end{proof}

The corollary turns the search for a ``magic substitution'' into the search for a target sequence with a nontrivial stabilizer.  This stabilizer condition can itself be tested by a one-variable differential equation.  Define
\begin{equation}\label{eq:S_Q_def}
S(z):=\sum_{i\geq1}s_i z^i,
\qquad
Q(z):=\frac u2+S(z).
\end{equation}
The three derivations act on $Q$ as
\begin{equation}\label{eq:EDV_on_Q}
EQ=z,
\qquad
DQ=zQ_z,
\qquad
VQ=2QQ_z-\frac{Q^2-u^2/4}{z}.
\end{equation}
Indeed, if $A_i$ is as above, then $A_i=\coef{z^{i+1}}(uS+S^2)$, so
\[
VQ=\sum_{i\geq1} iA_i z^i
=z\frac{d}{dz}\left(\frac{uS+S^2}{z}\right)
=2QQ_z-\frac{Q^2-u^2/4}{z}.
\]
It follows that a sequence is annihilated by a nonzero combination $aE+bD+cV$ precisely when its generating series satisfies
\begin{equation}\label{eq:stabilizer_ODE}
az+bzQ_z+c\left(2QQ_z-\frac{Q^2-u^2/4}{z}\right)=0.
\end{equation}
Thus the search for two-dimensional invariant specializations of the infinite sequence space can be reduced to a first-order ODE for a single generating series.

We now apply the orbit-reduction principle of Corollary~\ref{cor:sl2_orbit}.  For the full $\kappa=0$ equation at $u=-1$, the Catalan weight sequence turns out to be annihilated by $V$.  The corollary then guarantees that its $E,D,V$ orbit is at most two-dimensional; the calculation below supplies explicit coordinates on this orbit and the restricted PDE.  After taking the diagonal $u^n x^n$, a different vector field survives, namely the shift field $L$, so the same Catalan point is instead followed along its canonical derivative flow.  We treat these two reductions in turn.

\subsection{Catalan orbit reduction and the signed fixed point}\label{sec:catalan_specializations}

The vector field $V$ itself has a simple characteristic description in terms of~\eqref{eq:S_Q_def}.  If $Q(z,t)$ follows the $V$-flow, then
\begin{equation}\label{eq:V_flow_Q}
Q_t=2QQ_z-\frac{Q^2-u^2/4}{z}.
\end{equation}
Set
\[
y:=\frac{Q^2-u^2/4}{z}.
\]
Along a characteristic of~\eqref{eq:V_flow_Q},
\[
\dot z=-2Q,
\qquad
\dot Q=-y,
\qquad
\dot y=0.
\]
Hence the $V$-flow is integrable by characteristics: locally one may write
\begin{equation}\label{eq:V_characteristic_solution}
Q+t y=F(y),
\qquad
y=\frac{Q^2-u^2/4}{z},
\end{equation}
where $F$ is fixed by the initial sequence.

For the Catalan sequence $s_i=C_{i-1}$, so that $S(z)=z\C(z)$ and $\C(z)=1+z\C(z)^2$, the initial characteristic invariant is
\begin{equation}\label{eq:Catalan_y_generic_u}
y_0(z)=(u+1)\C(z)-1.
\end{equation}
For $u\neq-1$ this can be inverted explicitly, and~\eqref{eq:V_characteristic_solution} is determined by
\begin{equation}\label{eq:Catalan_F_generic_u}
F(y)=\frac u2+\frac{y-u}{y+1}.
\end{equation}
The same special value can be seen directly from the interaction between $V$ and the shift field $L$.  Consider, for arbitrary $u$, the two-parameter Catalan surface
\begin{equation}\label{eq:Catalan_surface_generic_u}
s_1:=s,
\qquad
s_i:=C_{i-1}q^i\quad(i\geq2).
\end{equation}
Here $\partial_q$ denotes differentiation along this parameterized surface, while $L$ is the ambient shift derivation from Subsection~\ref{sec:shift_flow}.

\begin{prop}[Catalan transversality]\label{prop:Catalan_transversality}
Along the surface~\eqref{eq:Catalan_surface_generic_u}, the ambient vector field $V$ decomposes as
\begin{equation}\label{eq:Catalan_transversality}
V
=(s^2-q^2)E+2q(s-q)\partial_q+(u+1)L.
\end{equation}
Thus the failure of the Catalan surface to be $V$-invariant is carried precisely by the shift direction $L$, with coefficient $u+1$.  In particular, at $u=-1$ this transverse term disappears and the surface is $V$-invariant.
\end{prop}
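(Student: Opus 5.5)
The plan is to verify~\eqref{eq:Catalan_transversality} componentwise. All four vector fields $V$, $E$, $\partial_q$, $L$ are derivations of the form $\sum_i \phi_i\,\partial/\partial s_i$, so it suffices to compare the coefficients $\phi_i$ at each point of the surface~\eqref{eq:Catalan_surface_generic_u}.

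First I will record the components along the surface $s_1=s$, $s_i=C_{i-1}q^i$ for $i\ge2$.
\begin{itemize}
\item The $i$-th component of $V$ is $iA_i$, where $A_i=us_{i+1}+\sum_{j=1}^i s_js_{i+1-j}$ as in the proof of Theorem~\ref{th:sl2}.
\item $E$ has the single component $1$ at $i=1$.
\item Since $s_1=s$ does not depend on $q$, the vector field $\partial_q$ has component $0$ at $i=1$ and $iC_{i-1}q^{i-1}$ at $i\ge2$.
\item $L$ has component $is_{i+1}=iC_iq^{i+1}$ for every $i\ge1$; this uses only coordinates with index at least $2$.
\end{itemize}

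Next I treat the index $i=1$ separately. Here $A_1=uq^2+s^2$. On the right-hand side the $E$-term gives $s^2-q^2$, the $\partial_q$-term contributes nothing, and the $L$-term gives $(u+1)C_1q^2=(u+1)q^2$. The sum is $s^2+uq^2$, which matches $A_1$.

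For $i\ge2$, I split the convolution in $A_i$. The two boundary terms $j=1$ and $j=i$ involve $s_1=s$ and together give $2sC_{i-1}q^i$. The interior terms $2\le j\le i-1$ give $q^{i+1}\sum_{a+b=i-1,\ a,b\ge1}C_aC_b$.

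The one genuinely Catalan input is the convolution identity $\sum_{a+b=i-1}C_aC_b=C_i$, which is the coefficient form of $\C=1+t\C^2$. Removing the two terms with $a=0$ or $b=0$ shows that the interior sum equals $C_i-2C_{i-1}$. This also holds for $i=2$, where the interior sum is empty and $C_2-2C_1=0$. Hence
\[
iA_i=i\bigl[(u+1)C_iq^{i+1}+2sC_{i-1}q^i-2C_{i-1}q^{i+1}\bigr].
\]
This is exactly $2q(s-q)\cdot iC_{i-1}q^{i-1}+(u+1)\cdot iC_iq^{i+1}$, the $E$-term contributing nothing for $i\ge2$. This proves~\eqref{eq:Catalan_transversality}.

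For the final assertion, I set $u=-1$. Then $V=(s^2-q^2)E+2q(s-q)\partial_q$ along the surface. Both $E=\partial_s$ and $\partial_q$ are tangent to the parametrized surface, so $V$ is tangent to it as well, which gives $V$-invariance. The main point requiring care is the bookkeeping of the boundary terms of the convolution: keeping $s_1=s$ separate from the Catalan pattern, and checking that the small cases $i=1,2$ fit the general formula.
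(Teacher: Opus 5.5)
Your proposal is correct and follows the paper's proof essentially verbatim. Like the paper, you compare components, treat the $s_1$-component separately, and use the Catalan convolution $\sum_{a+b=i-1}C_aC_b=C_i$ with its two boundary terms removed, which rewrites $\frac1i Vs_i$ as $(u+1)s_{i+1}+2(s-q)s_i$ for $i\ge2$. Your explicit check that $i=2$ (empty interior sum) fits the general formula is a small bookkeeping point the paper leaves implicit.
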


\begin{proof}
The $s_1$ component is
\[
Vs_1=us_2+s_1^2=s^2+uq^2.
\]
For $i\geq2$, the Catalan convolution gives
\begin{align*}
\frac1iVs_i
&=uC_iq^{i+1}+2sC_{i-1}q^i
 +q^{i+1}\sum_{a=1}^{i-2}C_aC_{i-1-a}\\
&=(u+1)C_iq^{i+1}+2(s-q)C_{i-1}q^i\\
&=(u+1)s_{i+1}+2(s-q)s_i.
\end{align*}
Since $Ls_1=s_2=q^2$, $Ls_i=is_{i+1}$, and $q\partial_qs_i=is_i$ for $i\geq2$, these component identities are exactly~\eqref{eq:Catalan_transversality}.  Equivalently, the underlying Catalan quadratic identity
\[
T_q(z)^2-T_q(z)+qz=0,
\qquad
T_q(z):=qz\C(qz),
\]
may be written as
\[
\left(\frac u2+T_q(z)\right)^2
=\frac{u^2}{4}-qz+(u+1)T_q(z),
\]
which makes the factor $u+1$ explicit.
\end{proof}

The value $u=-1$ is therefore singular in two equivalent senses:~\eqref{eq:Catalan_y_generic_u} becomes $y_0\equiv-1$, and Proposition~\ref{prop:Catalan_transversality} shows that the transverse shift component of $V$ vanishes.  Corollary~\ref{cor:sl2_orbit} then gives the dimension drop automatically; it remains only to parameterize the resulting orbit and write the restricted vector fields explicitly.

\begin{corol}[Catalan fixed point and a two-dimensional invariant surface]\label{cor:Catalan_orbit}
At $u=-1$, let
\[
s_i^*:=C_{i-1}\qquad (i\geq1).
\]
Then $V$ annihilates the sequence $\mathbf s^*$.
Equivalently, with
\[
Q_*(z):=-\frac12+z\C(z),
\]
one has
\begin{equation}\label{eq:Catalan_square_root}
Q_*(z)^2=\frac14-z
\qquad\text{and}\qquad
VQ_*=0.
\end{equation}
The formal surface obtained from this point by the $D$- and $E$-flows is the specialization of~\eqref{eq:Catalan_surface_generic_u} at $u=-1$; we denote it by $\Sigma_{\C}$.  The Catalan point is $(s,q)=(1,1)$.  On this surface,
\begin{equation}\label{eq:EDV_Catalan_surface}
E=\partial_s,
\qquad
D=s\partial_s+q\partial_q,
\qquad
V=(s^2-q^2)\partial_s+2q(s-q)\partial_q.
\end{equation}
Consequently, if
\[
\mathcal F(x,s,q):=\G_0\bigl(x,-1,s,C_1q^2,C_2q^3,C_3q^4,\ldots\bigr),
\]
then~\eqref{eq:G0_EV} reduces to the two-variable first-order PDE
\begin{equation}\label{eq:G0_Catalan_reduced_q}
\bigl(1-x(s^2-q^2)\bigr)\mathcal F_s
=2xq(s-q)\mathcal F_q.
\end{equation}
Equivalently, writing $q=e^t$,
\begin{equation}\label{eq:G0_Catalan_reduced_t}
\bigl(1-x(s^2-e^{2t})\bigr)\mathcal F_s
=2x(s-e^t)\mathcal F_t.
\end{equation}
\end{corol}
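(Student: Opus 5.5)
Most of this corollary is bookkeeping on top of Proposition~\ref{prop:Catalan_transversality} and the generating-series identities~\eqref{eq:EDV_on_Q}, so I will organize it in three steps: the fixed point, the surface and its vector fields, and finally the pullback of~\eqref{eq:G0_EV}.

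\textbf{Fixed point.} At $u=-1$ put $S(z)=z\C(z)$. The Catalan relation $\C=1+z\C^2$ says $S^2=S-z$. Hence
\[
Q_*^2=\tfrac14-S+S^2=\tfrac14-z .
\]
Differentiating gives $2Q_*Q_{*,z}=-1$. In the formula for $VQ$ from~\eqref{eq:EDV_on_Q} this yields
\[
VQ_*=-1-\frac{(1/4-z)-1/4}{z}=-1+1=0.
\]
Since the coefficients of $Q$ are the coordinates $s_i$ (up to the constant $u/2$), $VQ_*=0$ is equivalent to $V$ annihilating the sequence $\mathbf s^*$. Alternatively, one can set $s=q=1$, $u=-1$ in~\eqref{eq:Catalan_transversality} and see that every coefficient vanishes.

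\textbf{Surface and restricted fields.} I would next identify the $E$- and $D$-orbit of $\mathbf s^*$ with the family~\eqref{eq:Catalan_surface_generic_u}. The $D$-flow is the weighted scaling $s_i\mapsto e^{it}s_i$, which gives $s_i=C_{i-1}q^i$. The $E$-flow translates $s_1$ only, which frees $s_1=s$. These two moves commute up to reparametrization, because $[D,E]=-E$ and $E$ keeps the surface invariant. So the orbit is exactly $\{(s,C_1q^2,C_2q^3,\ldots)\}$, and the Catalan point sits at $(s,q)=(1,1)$.

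On this parametrized surface, $E=\partial_s$ holds tautologically. Also $D=s\partial_s+q\partial_q$, because $D$ multiplies $s_i$ by $i$, and that is exactly what $q\partial_q$ does to $C_{i-1}q^i$. The formula for $V$ is~\eqref{eq:Catalan_transversality} at $u=-1$, where the $L$ term drops out. This confirms that $V$ is tangent to the surface, so the surface is invariant under all three fields. The factor $u+1$ vanishing is the only nontrivial point here, and Proposition~\ref{prop:Catalan_transversality} has already established it.

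\textbf{Reduced PDE.} Define $\mathcal F$ as the pullback of $\G_0$ along the surface map. For a vector field tangent to the surface, the chain rule gives
\[
(X\G_0)\circ\iota=X|_{\Sigma}\,\mathcal F .
\]
This is justified because all the quantities involved are formal power series in $x$ whose coefficients are polynomials in the $s_i$. Substituting $E=\partial_s$ and $V=(s^2-q^2)\partial_s+2q(s-q)\partial_q$ into~\eqref{eq:G0_EV} gives
\[
\mathcal F_s=x(s^2-q^2)\mathcal F_s+2xq(s-q)\mathcal F_q .
\]
Rearranging yields~\eqref{eq:G0_Catalan_reduced_q}. The change of variables $q=e^t$ turns $q\partial_q$ into $\partial_t$, which gives~\eqref{eq:G0_Catalan_reduced_t}.

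The step needing the most care is the tangency and chain-rule justification: one must check that the restriction of the ambient $V$ really equals the stated surface field. Proposition~\ref{prop:Catalan_transversality} supplies this componentwise, including the $i=1$ component $s^2-q^2$.
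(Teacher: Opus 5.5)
Your proof is correct and takes essentially the same route as the paper. The Catalan identity gives $Q_*^2=\tfrac14-z$, and then \eqref{eq:EDV_on_Q} gives $VQ_*=0$. The $D$- and $E$-flows sweep out $\Sigma_{\C}$, and the restriction of $V$ is Proposition~\ref{prop:Catalan_transversality} at $u=-1$. Substituting into \eqref{eq:G0_EV} then gives the reduced PDE. Your explicit check $2Q_*Q_{*,z}=-1$ and your chain-rule justification of the pullback spell out steps the paper leaves implicit.
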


\begin{proof}
The Catalan identity gives
\[
\left(-\frac12+z\C(z)\right)^2
=\frac14-z\C(z)+z^2\C(z)^2
=\frac14-z,
\]
which proves~\eqref{eq:Catalan_square_root}.  Equation~\eqref{eq:EDV_on_Q} then gives $VQ_*=0$.

The $D$-flow scales $s_i$ by $q^i$, and the $E$-flow translates only $s_1$, so their orbit through the Catalan point is precisely $\Sigma_{\C}$.  The restrictions of $E$ and $D$ are immediate.  Setting $u=-1$ in Proposition~\ref{prop:Catalan_transversality} removes the $L$ term and gives the displayed restriction of $V$.  Substitution into~\eqref{eq:G0_EV} yields~\eqref{eq:G0_Catalan_reduced_q}, and $q=e^t$ gives~\eqref{eq:G0_Catalan_reduced_t}.
\end{proof}

The preceding coordinates record the cycle weights directly, but the geometry of the orbit becomes simpler after one more change of variables.

\begin{prop}[Projective coordinates on the Catalan orbit]\label{prop:Catalan_projective_coordinates}
On $\Sigma_{\C}$, set
\begin{equation}\label{eq:Catalan_alpha_def}
\alpha:=s-q,
\qquad
\widehat{\mathcal F}(x,\alpha,q):=\mathcal F(x,\alpha+q,q).
\end{equation}
Then the full weight generating series on the surface is
\begin{equation}\label{eq:Catalan_projective_Q}
Q_{\alpha,q}(z)
:=-\frac12+\sum_{i\geq1}s_i z^i
=\alpha z-\frac12\sqrt{1-4qz}.
\end{equation}
In the coordinates $(\alpha,q)$, the three generators restrict to
\begin{equation}\label{eq:EDV_Catalan_projective}
E=\partial_\alpha,
\qquad
D=\alpha\partial_\alpha+q\partial_q,
\qquad
V=\alpha^2\partial_\alpha+2\alpha q\partial_q,
\end{equation}
and the reduced equation~\eqref{eq:G0_Catalan_reduced_q} becomes
\begin{equation}\label{eq:G0_Catalan_projective_PDE}
(1-x\alpha^2)\widehat{\mathcal F}_\alpha
=2x\alpha q\widehat{\mathcal F}_q.
\end{equation}
Moreover, the flows of $E,D,V$ are
\begin{equation}\label{eq:Catalan_projective_flows}
\begin{aligned}
\exp(tE):(\alpha,q)&\longmapsto(\alpha+t,q),\\
\exp(tD):(\alpha,q)&\longmapsto(e^t\alpha,e^tq),\\
\exp(tV):(\alpha,q)&\longmapsto
\left(\frac{\alpha}{1-t\alpha},\frac{q}{(1-t\alpha)^2}\right).
\end{aligned}
\end{equation}
Thus the action on $\alpha$ is the standard projective action of $\mathfrak{sl}_2$, while $q$ transforms by the derivative of the corresponding M\"obius map.  More generally, a time-dependent vector field
$a(t)E+b(t)D+c(t)V$ induces
\begin{equation}\label{eq:Catalan_Riccati_system}
\dot\alpha=a+b\alpha+c\alpha^2,
\qquad
\dot q=(b+2c\alpha)q,
\end{equation}
so the infinite weight evolution on this orbit reduces to a Riccati equation together with its linear variational equation.
\end{prop}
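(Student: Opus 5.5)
The proof is a change of variables on the Catalan surface followed by direct verification; no new combinatorics is needed.

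\emph{Step 1: the weight series.} On $\Sigma_{\C}$ we have $s_1=s=\alpha+q$ and $s_i=C_{i-1}q^i$ for $i\ge2$. Hence
\[
\sum_{i\ge1}s_iz^i=(s-q)z+qz\,\C(qz)=\alpha z+qz\,\C(qz).
\]
By the square-root identity~\eqref{eq:Catalan_square_root} applied at $qz$, we get $-\tfrac12+qz\,\C(qz)=-\tfrac12\sqrt{1-4qz}$ as formal series with constant term $-\tfrac12$. Adding $\alpha z$ gives~\eqref{eq:Catalan_projective_Q}.

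\emph{Step 2: the generators in the new coordinates.} I would transform~\eqref{eq:EDV_Catalan_surface} by the chain rule. Under $(s,q)\mapsto(\alpha,q)=(s-q,q)$ we have
\[
\partial_s=\partial_\alpha,\qquad \partial_q^{(s,q)}=\partial_q^{(\alpha,q)}-\partial_\alpha.
\]
This gives the following.
\begin{itemize}
\item $E=\partial_\alpha$ immediately.
\item $D=s\partial_\alpha+q(\partial_q-\partial_\alpha)=\alpha\partial_\alpha+q\partial_q$.
\item $V=(s^2-q^2)\partial_\alpha+2q\alpha(\partial_q-\partial_\alpha)$. Here $s^2-q^2-2q\alpha=\alpha(\alpha+2q)-2q\alpha=\alpha^2$, so $V=\alpha^2\partial_\alpha+2\alpha q\partial_q$.
\end{itemize}

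As an independent check, I would verify these on~\eqref{eq:Catalan_projective_Q} through~\eqref{eq:EDV_on_Q} with $u=-1$.
\begin{itemize}
\item $E$: $\partial_\alpha Q=z$, as required.
\item $D$: $(\alpha\partial_\alpha+q\partial_q)Q=\alpha z+qz/\sqrt{1-4qz}=zQ_z$.
\item $V$: $VQ=\alpha^2z+2\alpha qz(1-4qz)^{-1/2}$. For the right-hand side of~\eqref{eq:EDV_on_Q}, write $r=\sqrt{1-4qz}$, so $Q=\alpha z-r/2$ and $Q_z=\alpha+q/r$. Then
\[
2QQ_z=2\alpha^2z+2\alpha qz/r-\alpha r-q,\qquad (Q^2-\tfrac14)/z=\alpha^2z-\alpha r-q,
\]
and the difference is $\alpha^2z+2\alpha qz/r$, which matches.
\end{itemize}

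\emph{Step 3: the reduced PDE.} I would substitute the new forms of $E$ and $V$ into $E\G_0=xV\G_0$, or equivalently rewrite~\eqref{eq:G0_Catalan_reduced_q} under the same change of variables. This gives
\[
\widehat{\mathcal F}_\alpha=x\bigl(\alpha^2\widehat{\mathcal F}_\alpha+2\alpha q\widehat{\mathcal F}_q\bigr),
\]
which is~\eqref{eq:G0_Catalan_projective_PDE}.

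\emph{Step 4: the flows.} The $E$- and $D$-flows are immediate. For $V$, I would solve $\dot\alpha=\alpha^2$, giving $\alpha(t)=\alpha/(1-t\alpha)$, and then $\dot q=2\alpha(t)q$, giving $\log q$ increasing by $-2\log(1-t\alpha)$. The general field $aE+bD+cV$ gives~\eqref{eq:Catalan_Riccati_system} by adding the components linearly. Finally, I would note that $q$ scales by $(d/d\alpha)$ of the M\"obius map $\alpha\mapsto\alpha/(1-t\alpha)$, whose derivative is $(1-t\alpha)^{-2}$.

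The only point needing care is the formal-series sign choice for the square root in Step 1; fixing the constant term $-\tfrac12$ removes the ambiguity.
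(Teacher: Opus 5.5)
Your proposal is correct and follows the paper's proof: rewrite the weight series via $qz\,\C(qz)=\tfrac12\bigl(1-\sqrt{1-4qz}\bigr)$, push \eqref{eq:EDV_Catalan_surface} through the chain rule $\partial_s=\partial_\alpha$, $\partial_q|_s=\partial_q|_\alpha-\partial_\alpha$, apply the same change to the reduced PDE, and integrate the resulting vector fields. Your extra check against \eqref{eq:EDV_on_Q} is a useful addition: it verifies tangency directly on the series $Q_{\alpha,q}$, without relying on the transversality computation.
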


\begin{proof}
On $\Sigma_{\C}$,
\[
\sum_{i\geq1}s_i z^i
=sz+\sum_{i\geq2}C_{i-1}q^iz^i
=(s-q)z+qz\C(qz).
\]
Using $qz\C(qz)=(1-\sqrt{1-4qz})/2$ and $\alpha=s-q$ gives~\eqref{eq:Catalan_projective_Q}.  Under the change of coordinates $s=\alpha+q$ one has
\[
\partial_s=\partial_\alpha,
\qquad
\left.\partial_q\right|_s
=\left.\partial_q\right|_\alpha-\partial_\alpha.
\]
Substitution into~\eqref{eq:EDV_Catalan_surface} yields~\eqref{eq:EDV_Catalan_projective}; applying the same change to~\eqref{eq:G0_Catalan_reduced_q} gives~\eqref{eq:G0_Catalan_projective_PDE}.  Integrating the three vector fields gives~\eqref{eq:Catalan_projective_flows}.  Each flow has the form $\alpha\mapsto\phi(\alpha)$ and $q\mapsto q\phi'(\alpha)$, which is the tangent lift of the projective action.  Finally, taking the indicated linear combination in~\eqref{eq:EDV_Catalan_projective} gives~\eqref{eq:Catalan_Riccati_system}.
\end{proof}

\begin{remark}[Relation with Lie systems]
The vector fields $\partial_\alpha$, $\alpha\partial_\alpha$, and $\alpha^2\partial_\alpha$ in~\eqref{eq:EDV_Catalan_projective} are the classical projective realization of $\mathfrak{sl}_2$, and the first equation in~\eqref{eq:Catalan_Riccati_system} is the associated Riccati Lie system; see, for example,~\cite{CarinenaDeLucas2011}.  The $q$-equation is the induced tangent equation, since under a projective map $\alpha\mapsto\phi(\alpha)$ one has $q\mapsto q\phi'(\alpha)$.  What is specific here is that this familiar finite-dimensional projective dynamics appears as an invariant orbit inside the infinite space of cycle weights.
\end{remark}

The reduced equation is not only finite-dimensional but explicitly solvable.

\begin{prop}[Closed form on the signed Catalan surface]\label{prop:Catalan_surface_closed_form}
The restriction $\mathcal F$ in Corollary~\ref{cor:Catalan_orbit} is
\begin{equation}\label{eq:Catalan_surface_closed_form}
\mathcal F(x,s,q)
=-\frac18\log\!\left[
1+4x\left(\frac{q}{1-x(s-q)^2}\right)^2
\right].
\end{equation}
Equivalently, in the projective coordinates of Proposition~\ref{prop:Catalan_projective_coordinates},
\begin{equation}\label{eq:Catalan_surface_closed_form_alpha}
\widehat{\mathcal F}(x,\alpha,q)
=-\frac18\log\!\left[
1+4x\left(\frac{q}{1-x\alpha^2}\right)^2
\right].
\end{equation}
In particular, at the Catalan point,
\begin{equation}\label{eq:G0_signed_Catalan_closed}
\G_0(x,-1,C_0,C_1,C_2,\ldots)
=-\frac18\log(1+4x),
\end{equation}
and therefore
\begin{equation}\label{eq:G0_signed_Catalan_coeff}
2n\,\coef{x^n}\,\G_0(x,-1,C_0,C_1,C_2,\ldots)
=(-1)^n4^{n-1}\qquad(n\geq1).
\end{equation}
\end{prop}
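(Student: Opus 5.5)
The plan is to solve the reduced first-order PDE~\eqref{eq:G0_Catalan_projective_PDE} by a characteristic invariant, fix the solution with the M\"obius-ladder boundary value, and then specialize to the Catalan point. We work in the projective coordinates $(\alpha,q)$ of Proposition~\ref{prop:Catalan_projective_coordinates}, where the reduced equation is
\[
(1-x\alpha^2)\widehat{\mathcal F}_\alpha=2x\alpha q\,\widehat{\mathcal F}_q .
\]

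\textbf{Step 1: a first integral.} The natural candidate is
\[
w:=\frac{q}{1-x\alpha^2}.
\]
Its partial derivatives are $w_\alpha=2x\alpha q/(1-x\alpha^2)^2$ and $w_q=1/(1-x\alpha^2)$. Hence
\[
(1-x\alpha^2)w_\alpha=\frac{2x\alpha q}{1-x\alpha^2}=2x\alpha q\,w_q .
\]
So every formal function $\Phi(x,w)$ satisfies~\eqref{eq:G0_Catalan_projective_PDE}, and in particular so does the right-hand side of~\eqref{eq:Catalan_surface_closed_form_alpha}.

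\textbf{Step 2: uniqueness from the boundary.} Write $\mathcal F$ as a formal power series in $x$ whose coefficients are polynomials in $s,q$. In the original coordinates, \eqref{eq:G0_Catalan_reduced_q} reads
\[
\mathcal F_s=x\bigl[(s^2-q^2)\mathcal F_s+2q(s-q)\mathcal F_q\bigr].
\]
Thus the $x^n$-coefficient of $\mathcal F_s$ is determined by lower-order coefficients. It follows that $\mathcal F$ is determined by its restriction to $s=0$, i.e.\ $\alpha=-q$. This is the same recursive argument used at the end of the proof of Theorem~\ref{th:G0_PDE}, now applied on the surface.

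\textbf{Step 3: matching the boundary value.} By~\eqref{g0bound} with $u=-1$ and $s_2=C_1q^2=q^2$, the boundary value is
\[
\mathcal F(x,0,q)=\tfrac14\log\frac{1-q^2x}{1+q^2x}.
\]
We check the candidate at $\alpha=-q$. Since $(1-xq^2)^2+4xq^2=(1+xq^2)^2$, the candidate becomes
\[
-\tfrac18\log\frac{(1+xq^2)^2}{(1-xq^2)^2}=-\tfrac14\log\frac{1+xq^2}{1-xq^2},
\]
which is exactly the boundary value. This proves~\eqref{eq:Catalan_surface_closed_form_alpha}, and~\eqref{eq:Catalan_surface_closed_form} follows by substituting $\alpha=s-q$.

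\textbf{Step 4: the Catalan point.} At $(s,q)=(1,1)$ we have $\alpha=0$, which gives~\eqref{eq:G0_signed_Catalan_closed}. Expanding,
\[
-\tfrac18\log(1+4x)=\sum_{n\ge1}\frac{(-1)^n4^n}{8n}x^n .
\]
Multiplying the $x^n$-coefficient by $2n$ gives $(-1)^n4^{n-1}$, which is~\eqref{eq:G0_signed_Catalan_coeff}.

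\textbf{Main obstacle.} The only delicate point is Step 2. We must make sure that restricting to the orbit does not lose the uniqueness that Theorem~\ref{th:G0_PDE} provides. Concretely:
\begin{itemize}
\item the slice $s=0$ meets the surface $\Sigma_{\C}$ in the curve $\alpha=-q$;
\item the $E$-direction is transverse to this slice;
\item the factor $x$ on the right-hand side makes the recursion well founded in the $x$-degree.
\end{itemize}
All three are immediate once $\mathcal F$ is expanded in $x$ as above.
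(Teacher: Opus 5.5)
Your proof is correct and follows the paper's argument. The paper likewise uses the characteristic invariant $\eta=q/(1-x\alpha^2)$ of the projective-form PDE, obtaining it by integrating $\dot\alpha=1-x\alpha^2$, $\dot q=-2x\alpha q$ rather than by your direct check. It then matches the M\"obius-ladder boundary value at $s=0$ via $(1-xq^2)^2+4xq^2=(1+xq^2)^2$ and specializes to $(s,q)=(1,1)$. Your explicit uniqueness step (the recursion in $x$-degree with data on $s=0$) makes precise what the paper leaves implicit when it propagates the boundary value along characteristics.
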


\begin{proof}
For fixed $x$, use the projective form~\eqref{eq:G0_Catalan_projective_PDE}.  A characteristic system is
\[
\dot\alpha=1-x\alpha^2,
\qquad
\dot q=-2x\alpha q.
\]
Therefore
\[
\frac{d}{d\lambda}\log q=-2x\alpha
=\frac{d}{d\lambda}\log(1-x\alpha^2),
\]
so
\begin{equation}\label{eq:Catalan_characteristic_invariant}
\eta:=\frac{q}{1-x\alpha^2}
\end{equation}
is constant along characteristics.  At $s=0$, equivalently $\alpha=-q$, the boundary value inherited from Theorem~\ref{th:G0_PDE} is
\[
\mathcal F(x,0,q)
=\frac14\log\frac{1-xq^2}{1+xq^2}.
\]
On that boundary, $\eta=q/(1-xq^2)$, and therefore
\[
1+4x\eta^2
=\left(\frac{1+xq^2}{1-xq^2}\right)^2.
\]
Thus the boundary value is $-\frac18\log(1+4x\eta^2)$.  Since $\eta$ is the characteristic invariant, this gives~\eqref{eq:Catalan_surface_closed_form}.  Setting $(s,q)=(1,1)$ proves~\eqref{eq:G0_signed_Catalan_closed}, and coefficient extraction gives~\eqref{eq:G0_signed_Catalan_coeff}.
\end{proof}

Equations~\eqref{eq:Catalan_transversality},~\eqref{eq:EDV_Catalan_projective}, and~\eqref{eq:Catalan_surface_closed_form} give a structural interpretation of the ``magic substitution'' that motivated this calculation: the Catalan surface is selected because the obstruction to tangency of $V$ is exactly the term $(u+1)L$, while at $u=-1$ its intrinsic dynamics is the tangent lift of the standard projective $\mathfrak{sl}_2$ action.  In the coordinate $\alpha=s-q$, the reduced PDE has the simple characteristic invariant~\eqref{eq:Catalan_characteristic_invariant}.

Equation~\eqref{eq:Catalan_projective_Q} also clarifies the square-root ansatz: the slice $\alpha=0$ is precisely the square-root family through the Catalan point, and the $E$-direction adds the single linear deformation $\alpha z$.  More generally, the stationary equation $VQ=0$ is
\[
2QQ_z=\frac{Q^2-u^2/4}{z}.
\]
Thus $Y(z):=Q(z)^2-u^2/4$ satisfies $Y'=Y/z$, and every $V$-stationary sequence has
\begin{equation}\label{eq:V_stationary_family}
Q(z)^2=\frac{u^2}{4}+cz
\end{equation}
for some constant $c$.  The Catalan point at $u=-1$ is the member $c=-1$.  In particular, a square-root ansatz of this form should be viewed as the general fixed-point family of the vector field $V$, rather than as an ad hoc guess.

\subsection{Diagonal Catalan shift-flow and noncrossing trees}\label{sec:trees}

The Catalan point has a second, complementary role after taking a diagonal of $\G_0$.  Unlike the signed specialization above, this application is governed by the shift vector field $L$ from Subsection~\ref{sec:shift_flow}.  Thus the derivative substitution below is not an independent guess: it is the canonical Catalan shift-flow through the weight sequence $(C_0,C_1,C_2,\ldots)$.  This reduction leads to a generating function for a classical noncrossing-tree statistic.

Let $\Delta$ be the diagonal of $\G_0$ with respect to $x$ and $u$, that is
\[
\Delta(x,s_1,s_2,\dots) := \sum_{n\geq 1} \frac{x^n}{2n} \sum_{\substack{c_1+2c_2+3c_3+\cdots=2n\\ c_1+c_2+c_3+\cdots=n}} g_0(n,c_1,c_2,\dots)  s_1^{c_1} s_2^{c_2} \cdots.
\]
Extracting the diagonal $u^n x^n$ from the recursive integral form in Corollary~\ref{cor:G0_integral_form} gives
\begin{equation}\label{eqn:pde_diag_0}
\Delta = \frac14\log \frac{1+s_2x}{1-s_2x} + x \int {\rm d}s_1 \sum_{i\geq 1} i  s_{i+1} \frac{\partial \Delta}{\partial s_i},
\end{equation}
with boundary condition
\[
\Delta(x,0,s_2,\dots) = \frac14\log \frac{1+s_2x}{1-s_2x}.
\]

Apply the decoupled shift surface~\eqref{eq:decoupled_shift_surface} to $f=\C$, and define the pullback
\[
\mathcal D(x,s,t):=\Delta\!\left(x,s,\frac{\C'(t)}{1!},\frac{\C''(t)}{2!},\dots\right).
\]
Thus $\mathcal D=\Delta\circ\iota_{\C}$ in the cycle variables, and since $C_0=1$,
\[
\mathcal D(x,1,0)=\Delta(x,C_0,C_1,C_2,\ldots).
\]

\begin{lemma}[Diagonal Catalan shift-flow]\label{lem:h0_pde}
The function $\mathcal D(x,s,t)$ satisfies
\[
(1-x\C'(t))\mathcal D_s = x\mathcal D_t
\]
with boundary condition
\[
\mathcal D(x,0,t) = \frac14\log \frac{1+\C'(t)x}{1-\C'(t)x}.
\]
\end{lemma}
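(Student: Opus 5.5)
The plan is to differentiate the diagonal equation~\eqref{eqn:pde_diag_0} and then pull it back along the decoupled shift surface $\iota_{\C}$ using~\eqref{eq:shift_surface_pullback}.

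First, apply $E=\partial/\partial s_1$ to~\eqref{eqn:pde_diag_0}. The logarithmic term does not depend on $s_1$, and $\int{\rm d}s_1$ is the zero-constant antiderivative. We therefore obtain the differential form
\[
E\Delta = x\,L\Delta,
\]
where $L=\sum_{i\ge1} i s_{i+1}\partial_{s_i}$ is the shift derivation from Subsection~\ref{sec:shift_flow}. One should check that the diagonal extraction really produces~\eqref{eqn:pde_diag_0} from Corollary~\ref{cor:G0_integral_form}. The point is that in $V$ the term $u s_{i+1}$ raises the $u$-degree by one, matching the extra factor $x$. Each quadratic term $s_js_{i+1-j}$ leaves the $u$-degree unchanged while raising the $x$-degree. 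Since every coefficient satisfies $u$-degree $\le$ $x$-degree, quadratic terms never contribute to the diagonal $u^nx^n$. I take this as already established, since the displayed equation~\eqref{eqn:pde_diag_0} precedes the lemma.

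Next, restrict to the surface. With $f=\C$, the map $\iota_{\C}(s,t)=(s,\C'(t),\C''(t)/2!,\dots)$ gives, by~\eqref{eq:shift_surface_pullback},
\[
(E\Delta)\circ\iota_{\C}=\mathcal D_s,
\qquad
(L\Delta)\circ\iota_{\C}=\C'(t)\,\mathcal D_s+\mathcal D_t.
\]
Substituting these into $E\Delta=xL\Delta$ gives
\[
\mathcal D_s = x\C'(t)\mathcal D_s + x\mathcal D_t,
\]
which is exactly $(1-x\C'(t))\mathcal D_s=x\mathcal D_t$. The justification of~\eqref{eq:shift_surface_pullback} is the chain rule along $\iota_f$. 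The $t$-derivative of the $i$-th coordinate $f^{(i)}(t)/i!$ equals $(i+1)\,f^{(i+1)}(t)/(i+1)!$, which is $i s_{i+1}$ for $i\ge2$. The coordinate $s_1=s$ is free, so the missing $i=1$ term $s_2\partial_{s_1}=\C'(t)\partial_s$ has to be added by hand. This is precisely the $f'(t)(\Phi_f)_s$ term.

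Finally, for the boundary condition, set $s=0$ in $\mathcal D$. This is the same as setting $s_1=0$ in $\Delta$, so the given boundary value $\Delta(x,0,s_2,\dots)=\frac14\log\frac{1+s_2x}{1-s_2x}$ applies. On the surface $s_2=\C'(t)$, which yields the stated boundary condition.

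The main obstacle is formal legitimacy of composing $\Delta$ with $\iota_{\C}$. Each coefficient of $x^n$ in $\Delta$ is a polynomial in finitely many $s_i$, specifically $i\le n+1$ because $c_1+2c_2+\dots=2n$. The composition is therefore coefficientwise a formal power series in $t$, and the chain rule applies termwise.
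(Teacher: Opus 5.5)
Your proposal is correct and is essentially the paper's proof: you differentiate \eqref{eqn:pde_diag_0} in $s_1$ to get $E\Delta=xL\Delta$, pull it back along $\iota_{\C}$ via \eqref{eq:shift_surface_pullback}, and restrict the $s_1=0$ boundary value to $s_2=\C'(t)$. Your side check of the chain rule has a cosmetic off-by-one: on $\iota_f$ the coordinate $s_i$ ($i\ge2$) is $f^{(i-1)}(t)/(i-1)!$, whose $t$-derivative is $f^{(i)}(t)/(i-1)!=i\,s_{i+1}$, whereas your written chain starting from $f^{(i)}(t)/i!$ does not yield $i\,s_{i+1}$, although the identity you conclude is the right one.
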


\begin{proof}
Differentiating~\eqref{eqn:pde_diag_0} with respect to $s_1$ gives the operator equation
\begin{equation}\label{eq:diagonal_EL}
E\Delta=xL\Delta.
\end{equation}
The definition of $\mathcal D$ is precisely the pullback of $\Delta$ to the surface $\iota_{\C}$.  Therefore~\eqref{eq:shift_surface_pullback} turns~\eqref{eq:diagonal_EL} directly into
\[
\mathcal D_s=x\bigl(\C'(t)\mathcal D_s+\mathcal D_t\bigr),
\]
which is the asserted PDE after rearrangement.  The boundary condition is the pullback of the boundary value of $\Delta$ at $s_1=0$.
\end{proof}

\begin{thm}\label{th:tree_specialization}
We have
\[
2\mathcal D_x(x,1,0) = \frac{h(x)^2}{(3h(x)^2-x)^2},
\]
where
\[
h(x) := \sum_{n\geq 1} \frac{1}{2n-1}\binom{3n-3}{n-1} x^n
\]
is characterized by
\[
h(x)=x\C(h(x)).
\]
\end{thm}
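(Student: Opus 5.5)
The plan is to solve the first-order linear PDE of Lemma~\ref{lem:h0_pde} by characteristics, evaluate the solution at $(s,t)=(1,0)$, and then simplify using the algebraic equation satisfied by $h$.

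\textbf{Characteristics.} Since the coefficient of $\mathcal D_t$ is the constant $x$, I would parametrize characteristics by $t$. Along them
\[
\frac{ds}{dt}=-\frac{1-x\C'(t)}{x},
\]
so the quantity
\[
I:=s+\frac{t}{x}-\C(t)
\]
is constant, and $\mathcal D$ is a function of $x$ and $I$ alone. The characteristic through $(s,t)=(1,0)$ has $I=1+0-\C(0)=0$. It meets the boundary $s=0$ at the point $t_0$ with $t_0/x-\C(t_0)=0$, that is, $t_0=x\C(t_0)$. This is exactly the defining relation of $h$, so $t_0=h(x)$. The boundary condition then gives
\[
\mathcal D(x,1,0)=\frac14\log\frac{1+x\C'(h)}{1-x\C'(h)}.
\]

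To make this rigorous for formal power series, I would check two things directly. First, $\mathcal D(x,s,t):=\frac14\log\frac{1+x\C'(\theta)}{1-x\C'(\theta)}$ solves the PDE and the boundary condition, where $\theta=\theta(x,s,t)$ is defined by $\theta/x-\C(\theta)=s+t/x-\C(t)$. Equivalently $\theta=t+xs+x(\C(\theta)-\C(t))$, which is a contraction in $x$ and so has a unique formal solution. Second, the PDE determines the solution uniquely, by the recursive-in-$x$ argument already used for Theorem~\ref{th:G0_PDE}.

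\textbf{The coefficients of $h$.} From $h=x\C(h)$, Lagrange inversion gives
\[
\coef{x^n}h=\frac1n\coef{w^{n-1}}\C(w)^n=\frac{1}{3n-2}\binom{3n-2}{n-1}=\frac{1}{2n-1}\binom{3n-3}{n-1},
\]
using $\coef{w^m}\C^n=\frac{n}{2m+n}\binom{2m+n}{m}$.

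\textbf{Algebraic simplification.} I would eliminate $\C$ in favor of $h$. Write $\C(h)=h/x$. Substituting into $\C=1+t\C^2$ gives the cubic
\[
x^2-xh+h^3=0.
\]
Differentiating $\C=1+t\C^2$ gives $\C'=\C^2/(1-2t\C)$. At $t=h$ this yields
\[
x\C'(h)=\frac{h^2}{x-2h^2},
\qquad
\mathcal D(x,1,0)=\frac14\bigl[\log(x-h^2)-\log(x-3h^2)\bigr].
\]
Implicit differentiation of the cubic gives
\[
h'=\frac{2x-h}{x-3h^2}.
\]
Finally I would differentiate in $x$ and reduce with the cubic, for instance replacing $h^3$ by $xh-x^2$, to reach $2\mathcal D_x=h^2/(3h^2-x)^2$.

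\textbf{Main obstacle.} The step I expect to be hardest is this last reduction. The raw derivative is a rational function of $x$, $h$ and $h'$, and it collapses only after repeated use of the cubic relation. If the direct route gets messy, I would first compute
\[
\frac{d}{dx}\log\frac{x-h^2}{x-3h^2}=\frac{2h^2(1-hh'\cdot x/h^2\cdot\,)\ldots}{(x-h^2)(x-3h^2)}
\]
in a cleaner form, namely as $\frac{2h(x h'-h)}{(x-h^2)(x-3h^2)}$. I would then show $\frac{x h'-h}{x-h^2}=\frac{h}{x-3h^2}$, which is where the cubic enters.
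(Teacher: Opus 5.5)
Your route is the same as the paper's. You use the characteristic invariant $xs+t-x\C(t)$ (your $I$ times $x$), and you note that the characteristic through $(s,t)=(1,0)$ meets $s=0$ at $t=h(x)$. You get the coefficients of $h$ by Lagrange inversion, and finish by differentiating $\frac14\log\frac{1+x\C'(h)}{1-x\C'(h)}$ after eliminating $\C(h)$ and $\C'(h)$ with the Catalan relation. Your formal justification is sound: $\theta=t+xs+x(\C(\theta)-\C(t))$ is solved recursively in $x$, and the recursion in $x$ also gives uniqueness. Your intermediate formulas $x^2-xh+h^3=0$, $x\C'(h)=h^2/(x-2h^2)$ and $h'=(2x-h)/(x-3h^2)$ are all correct.

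The concrete reduction you propose for the last step is wrong as written. The logarithmic derivative is
\[
\frac{d}{dx}\log\frac{x-h^2}{x-3h^2}=\frac{1-2hh'}{x-h^2}-\frac{1-6hh'}{x-3h^2}=\frac{2h\,(2xh'-h)}{(x-h^2)(x-3h^2)},
\]
not $2h(xh'-h)$ over the same denominator. The identity you plan to prove, $\frac{xh'-h}{x-h^2}=\frac{h}{x-3h^2}$, is false: the left side is $x+O(x^2)$ and the right side is $1+O(x)$. So that verification would not go through.

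With the correct factor, the cubic $h^3=xh-x^2$ finishes the computation. It gives
\[
2xh'-h=\frac{4x^2-3xh+3h^3}{x-3h^2}=\frac{x^2}{x-3h^2},
\qquad
h\,(x-h^2)=xh-h^3=x^2 .
\]
Hence $\frac{2xh'-h}{x-h^2}=\frac{h}{x-3h^2}$, so $\frac{d}{dx}\log\frac{x-h^2}{x-3h^2}=\frac{2h^2}{(x-3h^2)^2}$. Multiplying by $\frac14\cdot 2$ gives $2\mathcal D_x(x,1,0)=\frac{h^2}{(3h^2-x)^2}$.

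As a minor point, write $\mathcal D(x,1,0)$ as $\frac14\log$ of the ratio $\frac{x-h^2}{x-3h^2}$, which has constant term $1$. The separate logarithms of $x-h^2$ and $x-3h^2$ are not formal power series.
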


\begin{proof}
For fixed $x$, the characteristic invariant of the first-order PDE in Lemma~\ref{lem:h0_pde} is $xs+t-x\C(t)$.  Hence there exists a formal series $f$ such that
\[
\mathcal D(x,s,t)=f(x,xs+t-x\C(t)),
\]
where the boundary condition determines $f$ through
\[
f(x,t-x\C(t)) = \frac14\log \frac{1+\C'(t)x}{1-\C'(t)x}.
\]
Lagrange inversion shows that the unique formal-power-series solution of $t-x\C(t)=0$ is $t=h(x)$ with the coefficients displayed above. Consequently,
\[
\mathcal D(x,1,0)=f(x,0)=\frac14\log \frac{1+\C'(h(x))x}{1-\C'(h(x))x}.
\]
Differentiating with respect to $x$ gives
\[
2\mathcal D_x(x,1,0)=\frac{(x\C'(h(x)))'}{1-\C'(h(x))^2x^2}
=\frac{h(x)^2}{(3h(x)^2-x)^2},
\]
where the last equality follows by differentiating $h=x\C(h)$ and using the Catalan identity $\C(z)=1+z\C(z)^2$ to eliminate $\C(h)$ and $\C'(h)$.
\end{proof}

\begin{corol}\label{cor:noncrossing_trees}
The series
\[
2\Delta_x(x,C_0,C_1,C_2,\dots)
= \sum_{n\geq 1} x^{n-1}
\sum_{\substack{c_1+2c_2+3c_3+\cdots=2n\\ c_1+c_2+c_3+\cdots=n}}
 g_0(n,c_1,c_2,\dots) C_0^{c_1}C_1^{c_2}\cdots
\]
equals the generating function for the sum of the levels of all nodes over all noncrossing trees with $n$ edges.
\end{corol}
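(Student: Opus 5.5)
The displayed identity for $2\Delta_x$ holds because $2\frac{\partial}{\partial x}(x^n/2n)=x^{n-1}$, and $\mathcal D(x,1,0)=\Delta(x,C_0,C_1,\ldots)$ holds because $C_0=1$. So by Theorem~\ref{th:tree_specialization} the left-hand side equals
\[
\frac{h(x)^2}{(3h(x)^2-x)^2},\qquad h=x\C(h).
\]
The plan is therefore to show that this algebraic series is the generating function of the total level (depth) of all nodes, summed over all noncrossing trees with $n$ edges. We index by $x^{n-1}$ for trees with $n$ edges, after checking this indexing on small cases.

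\textbf{Step 1: the tree generating function.} We use the standard root decomposition of noncrossing trees. Let $T(x)=\sum_n \frac{1}{2n+1}\binom{3n}{n}x^n$ count noncrossing trees by edges. We also need a series $B(x)$ counting ``butterflies'' (root-incident subtrees), satisfying $B=x T^2$ and $T=1/(1-B)$, or some equivalent form. Comparing with $h$: the coefficients $\frac{1}{2n-1}\binom{3n-3}{n-1}$ are exactly those of $x\,T(x)$ shifted. Indeed $h=x\C(h)$ gives $h/x=\C(h)$, so $h/x=1+h\,(h/x)^2$, and hence $T:=h/x$ satisfies $T=1+xT^3$. This identifies $h=xT$ as the generating function of noncrossing trees with a marked extra factor.

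\textbf{Step 2: depth sum.} Introduce a variable $w$ marking the depth of each node. The total depth is $\partial_w$ at $w=1$ of a bivariate series obtained from the recursion in which every descent multiplies a subtree by $w$. The decomposition at the root into a sequence of butterflies, each consisting of a child together with two noncrossing trees attached, gives a functional equation of the form $T(w)=1/(1-xT(w)\,\widetilde T(w))$ or similar. Differentiating at $w=1$ yields a rational expression in $T$ and $x$ with denominator $(1-3xT^2)^2$.

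\textbf{Step 3: match.} Rewrite this expression in terms of $h=xT$. Note that $3h^2-x=x(3xT^2-1)$, so the target equals $T^2/(1-3xT^2)^2$. Then verify that the total-depth series equals $T^2/(1-3xT^2)^2$, or equivalently derive it via $T'=T^3/(1-3xT^2)$, which makes the denominator natural. We sanity-check against $n=1,2$: from $\G_0$, the diagonal terms $s_2u\,x/2$ and $4s_1s_3u^2\,x^2/4$ give coefficients $1$ and $4\cdot C_0C_2=8$. Noncrossing trees with $1$ edge have total depth $1$; with $2$ edges there are $3$ trees, with depth sums $3,3,2$, total $8$. The main obstacle is Step 2, namely choosing the correct rooted decomposition so that depths propagate with the right weights; a butterfly's two sides sit at different depths. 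The fallback is to compare with the known OEIS formula for this statistic, or to prove coefficient equality via Lagrange inversion on both sides.
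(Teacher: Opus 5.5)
Your reduction is correct and agrees with the paper. You have $2\Delta_x(x,C_0,C_1,\ldots)=2\mathcal D_x(x,1,0)$, the series $T:=h/x$ satisfies $T=1+xT^3$, and $h^2/(3h^2-x)^2=T^2/(1-3xT^2)^2$. Since $1-3xT^2=(3-2T)/T$, multiplying by $x$ gives exactly the paper's $g(g-1)/(3-2g)^2$ with $g=T$.

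The difference is entirely on the tree side. The paper does not derive the path-length series; it quotes $g(g-1)/(3-2g)^2$ from Deutsch--Noy~\cite{DeutschNoy2002}, with OEIS A062236 as a check. That is your stated fallback. Your main route derives the series instead. This makes the corollary self-contained and puts an explicit tree decomposition beside the breakpoint-graph equation, which could help with the bijection the paper leaves open. The paper's route buys brevity at the cost of an external citation.

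As written, though, Step 2 is only a sketch, and its hint is mis-weighted. In the butterfly decomposition $T=1/(1-xT^2)$, both trees of a butterfly are rooted at the same child, so they sit at the same depth, not different depths. The correct marked equation is $T(x,w)=1/\bigl(1-xw\,T(xw,w)^2\bigr)$. If your $1/(1-xT(w)\widetilde T(w))$ is read with $T(w)$ unshifted, one side of each butterfly is placed at the root's level, and the $n=2$ count comes out $7$ instead of $8$.

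The cleanest way to get your denominator is the ternary decomposition. Delete the edge from the root to its last child $v$. By noncrossing, the tree splits into a noncrossing tree $A$ rooted at the original root and two noncrossing trees $B,C$ rooted at $v$, with $e_B,e_C$ edges. The vertex $v$ and the $e_B+e_C$ nonroot vertices of $B$ and $C$ each gain one level. Hence the path-length series $P(x)=\sum_n(\text{total depth over }n\text{-edge trees})\,x^n$ satisfies
\[
P=3xT^2P+xT^3+2x^2T^2T'.
\]
Using $T'=T^3/(1-3xT^2)$ and $T(1-xT^2)=1$, this gives $P=xT^2/(1-3xT^2)^2$. That is exactly your target with the $x^{n-1}$ indexing, so with this computation inserted your argument is complete.
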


\begin{proof}
Noncrossing trees on a circle were enumerated systematically by Noy~\cite{Noy1998}, and their height and path-length statistics were studied by Deutsch and Noy~\cite{DeutschNoy2002}.  The connection here is exact at the level of their classical generating functions.  Put $g(x):=h(x)/x$.  Since $h=x\C(h)$, the Catalan identity gives
\[
g=1+xg^3,
\]
the standard ternary/noncrossing-tree equation.  Multiplying the identity in Theorem~\ref{th:tree_specialization} by $x$ and eliminating $x$ with $xg^3=g-1$ gives
\begin{equation}\label{eq:noncrossing_tree_pathlength_gf}
x\,2\mathcal D_x(x,1,0)
=\frac{g(x)(g(x)-1)}{(3-2g(x))^2}.
\end{equation}
This is the generating function for the total path length (equivalently, the sum of the levels of all nodes) over noncrossing trees obtained in~\cite{DeutschNoy2002}; the same sequence is recorded in the OEIS as A062236~\cite{OEIS-A062236}. This proves the corollary.
\end{proof}

Thus the corollary identifies the breakpoint-graph diagonal with a classical noncrossing-tree statistic, not merely with a coincident numerical sequence.  What remains open is a direct bijection explaining the node-level statistic in terms of the special-breakpoint-graph cycle structure.

We next turn from finite-dimensional orbit reductions to coefficientwise consequences of the $\kappa=0$ series.  The first result fixes the number of gray-red cycles and proves polynomiality in $n$; the second refines this polynomiality by the complete nontrivial permutation cycle type.

\subsection{Polynomiality at fixed gray-red cycle number}\label{sec:polynomiality}

The $\kappa=0$ stratum has an additional finiteness property that is not apparent from the infinite-variable PDE.  Fix constants
\[
\mathbf w:=(w_1,w_2,\ldots),\qquad w_1=1,
\]
and write
\[
B_m^{\mathbf w}(n):=\coef{x^n u^m}\,\G_0(x,u,w_1,w_2,\ldots).
\]
Thus $B_m^{\mathbf w}(n)$ is the weighted enumeration of special breakpoint graphs with exactly $m$ gray-red cycles, including the normalization $1/(2n)$ from the definition of $\G_0$.  The condition $w_1=1$ is natural for weights attached only to nontrivial permutation cycles; in particular, it holds for the Catalan specialization $w_i=C_{i-1}$.

\begin{thm}[Fixed-$u$ polynomiality]\label{th:fixed_u_poly}
For every fixed integer $m\geq1$ and every fixed weight sequence $\mathbf w$ with $w_1=1$, the quantity $B_m^{\mathbf w}(n)$ is a polynomial in $n$ for all integers $n\geq1$, of degree at most $2m-1$.

More precisely, it is a finite linear combination of binomial polynomials
\[
\binom{n+a-1}{r-1},
\qquad
m+1\le r\le 2m,\qquad 0\le a\le \frac r2.
\]
In particular, since a gray-red cycle contains at least two gray edges, $B_m^{\mathbf w}(n)=0$ for $1\le n<m$; hence this polynomial is divisible by
\[
(n-1)(n-2)\cdots(n-m+1).
\]
\end{thm}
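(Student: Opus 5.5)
The plan is to exploit the permutation-length form~\eqref{eq:kappa_transposition_length} rather than the PDE. In the $\kappa=0$ stratum, $\xi(\tau)=|\tau|$, so fixing $m$ gray-red cycles fixes $|\tau|=2n-\#(\tau)=m$. Writing $c_i$ for the cycle counts, this says $\sum_i (i-1)c_i=m$. Hence the nontrivial part of $\tau$ has a cycle type $\lambda$ drawn from a finite list independent of $n$, with support size $r=\sum_{i\ge2} ic_i$ satisfying $m+1\le r\le 2m$. Because $w_1=1$, fixed points contribute trivially to the weight. Therefore
\[
B_m^{\mathbf w}(n)=\sum_{\lambda}\Bigl(\prod_{i\ge2}w_i^{c_i(\lambda)}\Bigr)\,\frac{N_\lambda(n)}{2n},
\]
where $N_\lambda(n)$ counts $\tau\in S_{2n}$ of nontrivial type $\lambda$ with $\xi(\tau)=m$. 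It therefore suffices to show that each $N_\lambda(n)/(2n)$ is a finite combination of the binomials $\binom{n+a-1}{r-1}$ with $0\le a\le r/2$.

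Encode such a $\tau$ by three pieces of data: its support $P=\{p_1<\dots<p_r\}\subset\{1,\dots,2n\}$, read cyclically; the induced permutation pattern $\sigma$ on the cyclically ordered support points; and the gaps $g_k\ge0$ between consecutive support points. The key local step is to show that $\xi(\tau)$ depends only on $\sigma$, the parities of the $p_k$, and the gap parities $\varepsilon_k:=g_k\bmod 2$. To see this, look at a fixed point $i$: its black and gray edges coincide, so the segment $+i,-i$ together with its two red edges behaves as a single red path inside a gray-red cycle. Removing two consecutive fixed points shortens a gray-red cycle without changing $\xi$, while preserving both the special parity structure and all parities of the remaining labels. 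Iterating, every gap of size $g_k$ may be reduced to $\varepsilon_k\in\{0,1\}$, which gives a graph of bounded size determined by $(\sigma,\text{parities},\varepsilon)$. I expect this invariance to be the main obstacle, because the removal must be checked to stay inside special breakpoint graphs. The red edges alternate in orientation with the parity of $i$, so only pairs of fixed points can be deleted. I would verify it with a local surgery in the same spirit as the root-removal in Theorem~\ref{th:G0_PDE}, applied twice.

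Granting the invariance, the admissible data $(\sigma,\varepsilon,\text{parity of }p_1)$ form a finite set $\mathcal A_\lambda$, independent of $n$. I will then count placements of a fixed admissible pattern. Fix the first support point $p_1$ with its parity; this also handles the rooting, since the root is at label $1$. Once $p_1$ and $\varepsilon$ are fixed, write $g_k=2h_k+\varepsilon_k$. The constraint $\sum g_k=2n-r$ becomes $\sum h_k=(2n-r-|\varepsilon|)/2$, with $h_k\ge0$. The $h_k$ together with the offset of $p_1$ are counted by a binomial coefficient. Accounting for the $2n$ rotations against the $r$ choices of which support point is called $p_1$, the factor $2n$ cancels against the normalization $1/(2n)$. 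The result is, up to a constant factor (of the form $1/r$ or $2/r$), a term $\binom{n+a-1}{r-1}$ with $a=r-(r+|\varepsilon|)/2=(r-|\varepsilon|)/2\in[0,r/2]$. I would also double-check that the parity requirement on $|\varepsilon|$ is compatible with $r$. Summing over $\mathcal A_\lambda$ and over $\lambda$ gives the stated binomial form. Since $r\le 2m$, the degree in $n$ is at most $2m-1$.

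Finally, the divisibility claim follows from the vanishing argument in the statement. Each gray-red cycle uses at least two gray edges, so $m\le n$, and $B_m^{\mathbf w}(n)=0$ for $1\le n<m$. A polynomial vanishing at $n=1,\dots,m-1$ is divisible by $(n-1)\cdots(n-m+1)$. One must confirm that the binomial expression, which is valid for all $n\ge1$, also agrees with the true value in this small range. This holds provided the reduction to bounded patterns and the composition count are exact for all $n\ge1$, including the cases where some $h_k$ must vanish. I would verify that the composition count needs no boundary corrections there.
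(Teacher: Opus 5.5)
Your proposal is correct and is essentially the paper's proof. Both arguments bound the support by $m+1\le r\le 2m$. Both show that $\xi$ depends only on the finite local data $(\sigma,\text{parities})$, since adding or removing two adjacent fixed points only lengthens or shortens a gray-red path. Both then count pointed cyclic placements by stars and bars, obtaining $\binom{n+a-1}{r-1}$ with $a=(r-|\varepsilon|)/2$, i.e.\ half the number of parity changes.

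The loose ends you flag close immediately. The constant is exactly $1/(2r)$, because the distinguished point has $n$ positions of its prescribed parity. Since the upper argument $n+a-1$ is nonnegative for $n\ge1$, the binomial polynomial vanishes exactly when the composition count does, so the formula and the divisibility conclusion hold for all $n\ge1$.
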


\begin{proof}
We use the permutation representation of special breakpoint graphs.  Let $\tau\in S_{2n}$ contribute to $\coef{u^m}\G_0$.  The exponent of $u$ is the number $\xi(\tau)$ of gray-red cycles, so $\xi(\tau)=m$.  Since $\kappa=0$,
\[
\#(\tau)+\xi(\tau)=2n,
\]
where $\#(\tau)$ is the number of cycles of $\tau$.  Therefore
\begin{equation}\label{eq:transposition_length_m}
2n-\#(\tau)=m.
\end{equation}
The left-hand side is the transposition length of $\tau$.

Let $S:=\operatorname{supp}(\tau)$ be the support of $\tau$, i.e., the set of points moved by $\tau$, and put $r:=|S|$.  If the restriction of $\tau$ to $S$ has $q$ cycles, then
\[
\#(\tau)=(2n-r)+q,
\qquad\text{and hence}\qquad
m=r-q.
\]
Every cycle of this restriction has length at least two, so $q\le r/2$.  Also $q\ge1$.  It follows that
\begin{equation}\label{eq:support_bound}
m+1\le r\le2m.
\end{equation}
Thus, for fixed $m$, every contributing permutation moves a bounded number of positions, independently of $n$.

We next show that the condition $\xi(\tau)=m$ is itself local on these moved positions.  List the elements of $S$ in cyclic order around $1,2,\ldots,2n$, choose one of them as distinguished, and denote the resulting ordered list by $j_1,\ldots,j_r$.  Let
\[
\varepsilon:=(\varepsilon_1,\ldots,\varepsilon_r)\in\{0,1\}^r
\]
record their parities, and define $\sigma\in S_r$ by the relations
\[
\tau(j_i)=j_{\sigma(i)}\qquad(1\le i\le r).
\]
Since $S$ is the support, $\sigma$ has no fixed points.

For the identity permutation, the gray matching coincides with the black matching, and the gray-red union is the distinguished single alternating cycle.  Passing from the identity to $\tau$ replaces exactly the $r$ gray edges
\[
\{-j_i,+j_i\}
\quad\text{by}\quad
\{-j_i,+j_{\sigma(i)}\}.
\]
Delete the former $r$ gray edges from the identity gray-red cycle and contract each of the resulting alternating paths to a single edge.  Changing the length of any interval between consecutive support positions by two merely inserts one gray-red two-edge segment into such a path, and therefore does not change which boundary signs are connected.  Consequently the contracted path matching depends only on the cyclic parity word $\varepsilon$, while the new gray matching depends only on $\sigma$.  Hence the number of components after reconnection, namely $\xi(\tau)$, is a function
\[
\xi_{\mathrm{loc}}(\varepsilon,\sigma)
\]
of this finite local data alone; it is independent of the actual gaps between the support positions.

Let $t(\varepsilon)$ be the number of parity changes in the cyclic word,
\[
t(\varepsilon):=\#\{i:\varepsilon_i\ne\varepsilon_{i+1}\},
\qquad \varepsilon_{r+1}:=\varepsilon_1.
\]
This number is even.  We now count embeddings of one fixed pointed local type $(\varepsilon,\sigma)$ into the cyclic set $\{1,\ldots,2n\}$.  The distinguished support position has $n$ possible images of its prescribed parity.  If $d_i$ is the positive cyclic gap from $j_i$ to $j_{i+1}$, then
\[
d_i=\begin{cases}
2a_i+1,&\varepsilon_i\ne\varepsilon_{i+1},\\
2a_i+2,&\varepsilon_i=\varepsilon_{i+1},
\end{cases}
\qquad a_i\ge0.
\]
The relation $d_1+\cdots+d_r=2n$ becomes
\[
a_1+\cdots+a_r=n-r+\frac{t(\varepsilon)}2.
\]
Thus the number of gap vectors is
\begin{equation}\label{eq:local_embedding_count}
\binom{n+t(\varepsilon)/2-1}{r-1}.
\end{equation}
Here, as usual, the binomial coefficient is zero when its nonnegative upper argument is smaller than $r-1$; viewed as a function of $n$, the displayed expression is the corresponding binomial polynomial.

For fixed $m$ and $r$, let $\mathcal T_{m,r}$ be the finite set of pointed pairs $(\varepsilon,\sigma)$ for which $\sigma$ has no fixed points and
\[
r-\#(\sigma)=m,
\qquad
\xi_{\mathrm{loc}}(\varepsilon,\sigma)=m.
\]
For such a type define its cycle weight by
\[
W_{\mathbf w}(\sigma):=\prod_{\gamma\in\operatorname{cyc}(\sigma)}w_{|\gamma|}.
\]
All fixed points of the full permutation lie outside $S$ and contribute the factor $w_1=1$, so this is the complete weight.  Formula~\eqref{eq:local_embedding_count} counts pointed embeddings.  Every unpointed permutation with support size $r$ is represented exactly $r$ times, according to the choice of distinguished support point.  Taking also the $1/(2n)$ normalization in $\G_0$ into account gives the exact finite formula
\begin{equation}\label{eq:fixed_u_finite_formula}
B_m^{\mathbf w}(n)
=
\sum_{r=m+1}^{2m}\frac1{2r}
\sum_{(\varepsilon,\sigma)\in\mathcal T_{m,r}}
W_{\mathbf w}(\sigma)
\binom{n+t(\varepsilon)/2-1}{r-1}.
\end{equation}
Each summand has degree $r-1$, and~\eqref{eq:support_bound} gives $r-1\le2m-1$.  This proves polynomiality and the asserted degree bound.

Finally, every gray-red cycle contains an even positive number of gray edges, hence at least two, while the graph has $2n$ gray edges in total.  Therefore $m\le n$ whenever the coefficient is nonzero.  The polynomial $B_m^{\mathbf w}(n)$ consequently vanishes at $n=1,\ldots,m-1$, giving the stated divisibility.
\end{proof}

\paragraph{Catalan specialization.}
Taking $w_i=C_{i-1}$ gives
\[
B_m(n):=\coef{x^n u^m}\,\G_0(x,u,C_0,C_1,C_2,\ldots),
\]
and Theorem~\ref{th:fixed_u_poly} shows that $B_m(n)$ is a polynomial of degree at most $2m-1$.  The first two cases obtained from the finite local formula are
\[
B_1(n)=\frac n2,
\qquad
B_2(n)=\frac{n(n-1)(5n+2)}{12}.
\]
The second identity explains the polynomial behavior visible already in the first coefficients of $\G_0$.  More generally, formula~\eqref{eq:fixed_u_finite_formula} gives a finite algorithm for every fixed $m$: one enumerates permutations on at most $2m$ support positions together with their parity words, computes the reduced gray-red component count $\xi_{\mathrm{loc}}$, and then sums the resulting binomial polynomials. No expansion of the infinite-variable PDE is required.

\subsection{Refinement by permutation cycle type}\label{sec:cycle_type_refinement}

The fixed-$u$ polynomial $B_m(n)$ combines all nontrivial permutation cycle types of transposition length $m$.  The finite-support proof of Theorem~\ref{th:fixed_u_poly} can be sharpened before the cycle weights are specialized: each individual cycle type is polynomial in $n$ as well.

We encode the nontrivial cycle type by the partition
\[
\lambda:=1^{d_1}2^{d_2}\cdots\vdash m,
\]
where a part $j$ records a permutation cycle of length $j+1$, and $d_j$ is its multiplicity. We call $\lambda$ the \emph{reduced nontrivial cycle type}. Put
\[
q=q(\lambda):=\sum_{j\ge1}d_j,
\qquad
r=r(\lambda):=\sum_{j\ge1}(j+1)d_j=m+q.
\]
Thus a permutation of type $\lambda$ has $q$ nontrivial cycles, with $d_j$ cycles of length $j+1$, and its support has size $r$.  Define the unnormalized coefficient
\begin{equation}\label{eq:Anlambda_def}
A_{n,\lambda}
:=
2n\,\coef{x^n u^m s_1^{2n-r}\prod_{j\ge1}s_{j+1}^{d_j}}\,
\G_0(x,u,s_1,s_2,\ldots).
\end{equation}
It counts the $\kappa=0$ special breakpoint graphs whose underlying permutation has precisely this nontrivial cycle type.

\begin{thm}[Fixed cycle type]\label{th:fixed_cycle_type}
For every fixed partition $\lambda\vdash m$ with $q=q(\lambda)$ nontrivial cycles, $A_{n,\lambda}$ is a polynomial in $n$ of degree at most
\[
m+q.
\]
More precisely, let $\mathcal T_\lambda$ be the finite set of pointed local types $(\varepsilon,\sigma)$ from the proof of Theorem~\ref{th:fixed_u_poly} for which $\sigma$ has exactly $d_j$ cycles of length $j+1$ and $\xi_{\mathrm{loc}}(\varepsilon,\sigma)=m$.  Then
\begin{equation}\label{eq:fixed_cycle_type_formula}
A_{n,\lambda}
=
\frac{n}{m+q}
\sum_{(\varepsilon,\sigma)\in\mathcal T_\lambda}
\binom{n+t(\varepsilon)/2-1}{m+q-1}.
\end{equation}
In particular,
\begin{equation}\label{eq:fixed_cycle_type_divisibility}
n(n-1)\cdots(n-m+1)\ \mid\ A_{n,\lambda}
\end{equation}
as a polynomial in $n$.
\end{thm}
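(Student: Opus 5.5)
The plan is to rerun the finite-support argument from the proof of Theorem~\ref{th:fixed_u_poly}, this time without specializing the cycle weights and without the $1/(2n)$ normalization. By~\eqref{eq:Anlambda_def}, $A_{n,\lambda}$ counts permutations $\tau\in S_{2n}$ that satisfy two conditions: $\xi(\tau)=m$, and the nontrivial cycle type of $\tau$ is exactly $\lambda$. The second condition is equivalent to the restriction $\sigma$ of $\tau$ to its support having $d_j$ cycles of length $j+1$. It forces $|\operatorname{supp}\tau|=r=m+q$ and transposition length $m$, so the $\kappa=0$ condition is automatic once $\xi=m$.

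The key steps are as follows.

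\begin{enumerate}
\item I would decompose each such $\tau$ into its pointed local type $(\varepsilon,\sigma)$ together with the position of the distinguished support point and the gap vector. As shown in the proof of Theorem~\ref{th:fixed_u_poly}, $\xi(\tau)=\xi_{\mathrm{loc}}(\varepsilon,\sigma)$ is independent of the gaps, and the cycle type of $\tau$ is read off from $\sigma$ alone. So the admissible types are exactly those in $\mathcal T_\lambda$.
\item For a fixed pointed type, I would count embeddings. The distinguished point has $n$ choices with its prescribed parity, and the gap vectors are counted by~\eqref{eq:local_embedding_count}. This gives $n\binom{n+t(\varepsilon)/2-1}{r-1}$ pointed embeddings.
\item Each permutation arises exactly $r$ times, once for each choice of distinguished support point. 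Dividing by $r=m+q$ yields~\eqref{eq:fixed_cycle_type_formula}.
\end{enumerate}

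Compared with~\eqref{eq:fixed_u_finite_formula}, the factor $n$ survives because we no longer divide by $2n$, and the factor $2$ disappears accordingly. The degree bound follows immediately: each summand is $n$ times a polynomial of degree $r-1$, hence has degree at most $r=m+q$.

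For the divisibility~\eqref{eq:fixed_cycle_type_divisibility}, I would argue combinatorially rather than from the binomials. As in Theorem~\ref{th:fixed_u_poly}, every gray-red cycle contains at least two gray edges. Hence $A_{n,\lambda}=0$ for every integer $1\le n\le m-1$, since then $2n<2m$ gray edges cannot form $m$ gray-red cycles. Since $A_{n,\lambda}$ is a polynomial in $n$ agreeing with these values, it vanishes at $n=1,\dots,m-1$. It also has the explicit factor $n$ from~\eqref{eq:fixed_cycle_type_formula}. Combining, $n(n-1)\cdots(n-m+1)$ divides it.

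The main obstacle is a subtle point in the second and third steps: the binomial $\binom{n+t/2-1}{r-1}$ must agree, as a function on all integers $n\ge1$, with the true count, including small $n$ where the support positions would have to wrap around. I would check that the gap parametrization with $a_i\ge0$ is exactly valid whenever $n+t/2-1\ge0$; this holds because $t\ge0$ and $n\ge1$. In that range the binomial polynomial agrees with the genuine binomial coefficient, which vanishes when the upper argument is below $r-1$. A further check is that the pointed-to-unpointed division by $r$ is exact, i.e.\ it holds even when $\tau$ has cyclic symmetries. This is fine because distinguishing a support point always produces $r$ distinct pointed objects regardless of symmetry.
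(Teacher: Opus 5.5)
Your proposal is correct and follows the paper's proof essentially step for step. Fixing $\lambda$ pins the support size at $r=m+q$, and each pointed type in $\mathcal T_\lambda$ contributes $n\binom{n+t(\varepsilon)/2-1}{r-1}$ pointed embeddings; dividing by $r$ gives the formula, and divisibility follows from the explicit factor $n$ together with vanishing for $1\le n<m$. Your extra checks are handled correctly and are left implicit in the paper: that the binomial polynomial matches the actual gap count for every $n\ge1$, and that the division by $r$ is exact regardless of symmetry.
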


\begin{proof}
For the prescribed cycle type, every contributing permutation has support size
\[
r=\sum_{j\ge1}(j+1)d_j=m+q,
\]
so there is no longer a sum over support sizes in the proof of Theorem~\ref{th:fixed_u_poly}.  For a fixed pointed local type $(\varepsilon,\sigma)$, equation~\eqref{eq:local_embedding_count} gives
\[
n\binom{n+t(\varepsilon)/2-1}{r-1}
\]
pointed embeddings into $\{1,\ldots,2n\}$.  Each unpointed permutation is represented exactly $r$ times, according to the distinguished support point.  Since $A_{n,\lambda}$ is the unnormalized count, no factor $1/(2n)$ remains, and summing over the admissible local types gives~\eqref{eq:fixed_cycle_type_formula}.

Every binomial term has degree $r-1$, and the prefactor $n$ therefore gives degree at most $r=m+q$.  Moreover, the explicit factor $n$ gives a zero at $n=0$, while $m$ gray-red cycles are impossible for $1\le n<m$.  Hence the polynomial vanishes at $0,1,\ldots,m-1$, proving~\eqref{eq:fixed_cycle_type_divisibility}.
\end{proof}

Thus, after factoring out the universal falling factorial, every fixed cycle type has the form
\begin{equation}\label{eq:fixed_cycle_type_quotient}
A_{n,\lambda}
=n(n-1)\cdots(n-m+1)\,P_\lambda(n),
\qquad
\deg P_\lambda\le q.
\end{equation}
The number of nontrivial cycles, rather than merely the transposition length, controls the remaining degree.  This gives a refinement of Theorem~\ref{th:fixed_u_poly}: after a weight specialization, $B_m(n)$ is a weighted sum over the finitely many partitions $\lambda\vdash m$ of the polynomials $A_{n,\lambda}/(2n)$.

\paragraph{One nontrivial cycle.}
The case $q=1$, namely $\lambda=(m)$, has an especially simple closed form.  Equivalently, among the black-gray cycle variables we retain the monomial
\[
s_1^{2n-m-1}s_{m+1}.
\]
If the Catalan weights are kept formal before the final specialization, this is precisely the contribution conventionally described as the coefficient of $C_m$: the unique nontrivial permutation cycle has length $m+1$ and hence receives the weight $C_m$.

For convenience write
\begin{equation}\label{eq:Anm_def}
A_{n,m}:=A_{n,(m)}
=
2n\,\coef{x^n u^m s_1^{2n-m-1}s_{m+1}}\,\G_0.
\end{equation}

\begin{prop}[One nontrivial cycle]\label{prop:A135065}
For all $1\le m\le n$,
\begin{equation}\label{eq:Anm_closed}
A_{n,m}=n^2\binom{n-1}{m-1}.
\end{equation}
Consequently the triangular array $(A_{n,m})_{1\le m\le n}$ begins
\[
\begin{array}{c|ccccc}
n=1&1\\
n=2&4&4\\
n=3&9&18&9\\
n=4&16&48&48&16\\
n=5&25&100&150&100&25
\end{array}
\]
and, after the reindexing $N=n-1$, $K=m-1$, its entries are
\[
T(N,K):=(N+1)^2\binom{N}{K}.
\]
Its bivariate generating function is therefore
\begin{equation}\label{eq:A135065_bivariate}
\sum_{n\ge1}\sum_{m=1}^n A_{n,m}z^{n-1}y^{m-1}
=
\frac{1+z+zy}{(1-z-zy)^3}.
\end{equation}
\end{prop}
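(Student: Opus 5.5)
The plan is to avoid classifying local types and instead extract a recurrence for $A_{n,m}$ directly from the PDE of Theorem~\ref{th:G0_PDE}, then check by induction that $n^2\binom{n-1}{m-1}$ satisfies it. Write $a_{n,m}:=A_{n,m}/(2n)$ for the normalized coefficient of $M_{n,m}:=x^nu^ms_1^{2n-m-1}s_{m+1}$ in $\G_0$. The left side $E\G_0$ of~\eqref{eqn:pde_kappa_0} contributes $(2n-m-1)a_{n,m}$ at the monomial $M_{n,m}/s_1$. On the right side $xV\G_0$, I will list exactly which monomials at $x$-degree $n-1$ are carried by a term of $V$ onto $M_{n,m}/s_1$. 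Since each term of $V$ replaces one $s_i$ by $s_{i+1}$, $us_{i+1}$, or $s_js_{i+1-j}$, only three sources can end with a single nontrivial cycle.

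\emph{First source: the term $i=1$, $j=1$ of the $s_js_{i+1-j}$ part.} It acts as $s_1^2\partial_{s_1}$ on $M_{n-1,m}$ and contributes $(2n-m-3)a_{n-1,m}$.

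\emph{Second source: the split term with $j=1$ or $j=i$ at $i=m+1$.} This term is $2s_1s_{m+1}\partial_{s_{m+1}}$ with weight $i=m+1$. Applied to $M_{n-1,m}$ it contributes $2(m+1)a_{n-1,m}$. I will handle separately the boundary case $m+1=1$, which cannot occur.

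\emph{Third source: the Case~I term $us_{m+1}\partial_{s_m}$ with weight $m$.} It lengthens a single $m$-cycle. Applied to $x^{n-1}u^{m-1}s_1^{2n-m-1}s_m$ it contributes $m\,a_{n-1,m-1}$ when $m\ge2$. When $m=1$, the same term with $i=1$ would need a $u^0$ monomial $s_1^{2n-2}$ in $\G_0$. No such monomial exists, because $\G_0$ has no $u^0$ part: every graph has at least one gray-red cycle.

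I also have to confirm that no other products land on $M_{n,m}$. For example, $us_2\partial_{s_1}$ and the $s_1s_1$ term with $i\ge2$ always create a second nontrivial cycle or change the $u$-degree incorrectly. This bookkeeping check is routine.

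Collecting these gives
\[
(2n-m-1)a_{n,m}=(2n-m-3+2m+2)\,a_{n-1,m}+m\,a_{n-1,m-1}\,[m\ge2],
\]
that is,
\[
(2n-m-1)a_{n,m}=(2n+m-1)\,a_{n-1,m}+m\,a_{n-1,m-1}\,[m\ge2].
\]
The initial data come from the boundary value~\eqref{g0bound}. The only M\"obius ladder with a single nontrivial cycle is $n=1$, giving $a_{1,1}=\tfrac12$, i.e.\ $A_{1,1}=1$. Combined with $a_{n,m}=0$ for $m>n$, this determines the whole array. When $2n-m-1=0$, that is $n=m=1$, the boundary value settles the entry directly.

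The remaining step is verification. Substitute $a_{n,m}=\tfrac n2\binom{n-1}{m-1}$ into the recurrence and reduce with $\binom{n-1}{m-1}=\binom{n-2}{m-1}+\binom{n-2}{m-2}$. Both sides become polynomial multiples of $\binom{n-2}{m-2}$, and I expect the identity to be elementary. The main obstacle is the term bookkeeping in the first step: getting the multiplicity $i$ and the factor $2$ in the $j\in\{1,i\}$ split right, and checking that the $m=1$ column really has no Case~I input beyond the ladder boundary. As a safeguard I will compare with the listed coefficients of $\G_0$: $x^2$ gives $A_{2,1}=4$ and $A_{2,2}=4$, and $x^3$ gives $9,18,9$. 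If the recurrence fails, the fallback is formula~\eqref{eq:fixed_cycle_type_formula} with $q=1$. There I would show that the admissible pointed types number $m(m+1)$, all with $t(\varepsilon)=2$, so that the sum becomes $\tfrac{n}{m+1}\,m(m+1)\binom nm=n^2\binom{n-1}{m-1}$.

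Finally, the table follows by evaluation. The bivariate form~\eqref{eq:A135065_bivariate} follows from
\[
\sum_N (N+1)^2(1+y)^Nz^N=\frac{1+w}{(1-w)^3},\qquad w=z(1+y).
\]
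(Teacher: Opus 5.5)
Your proposal is correct and is essentially the paper's proof: isolating the terms of $E\G_0=xV\G_0$ that keep a single nontrivial cycle is exactly the paper's restriction to the degree-one component $\mathcal L_1$, and your three sources reproduce its recurrence $\frac{2n-m-1}{2n}A_{n,m}=\frac{mA_{n-1,m-1}+(2n+m-1)A_{n-1,m}}{2(n-1)}$ with $A_{1,1}=1$, followed by the same inductive check and the summation of $\sum_N(N+1)^2w^N$. Two small fixes are needed. First, the Case~I source monomial should be $x^{n-1}u^{m-1}s_1^{2n-m-2}s_m$. Second, the verification should not reduce to multiples of $\binom{n-2}{m-2}$, since that fails at $m=1$ where $\binom{n-2}{m-2}=0$; instead rewrite $\binom{n-2}{m-1}=\frac{n-m}{n-1}\binom{n-1}{m-1}$ and $\binom{n-2}{m-2}=\frac{m-1}{n-1}\binom{n-1}{m-1}$, which covers all $m$ and reduces the check to $m(m-1)+(2n+m-1)(n-m)=n(2n-m-1)$.
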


The same triangular array, under the reindexing $N=n-1$, $K=m-1$, is tabulated in the OEIS as sequence A135065~\cite{OEIS-A135065}.

\begin{proof}
Let $\mathcal L_1$ denote the part of $\G_0$ that is homogeneous of degree one in the variables $s_2,s_3,\ldots$.  By definition,
\begin{equation}\label{eq:L_onecycle}
\mathcal L_1
:=
\sum_{n\ge1}\sum_{m=1}^n
\frac{A_{n,m}}{2n}
 x^n u^m s_1^{2n-m-1}s_{m+1}.
\end{equation}
Differentiate the $\kappa=0$ equation with respect to $s_1$, as in the proof of Theorem~\ref{th:G0_PDE}:
\[
\frac{\partial\G_0}{\partial s_1}
=
x\sum_{i\ge1}i\left(us_{i+1}+\sum_{j=1}^i s_js_{i+1-j}\right)
\frac{\partial\G_0}{\partial s_i}.
\]
Taking the component homogeneous of degree one in $s_2,s_3,\ldots$, and writing $s=s_1$, gives
\begin{equation}\label{eq:L_onecycle_PDE}
(\mathcal L_1)_s
=
x\left[
 s^2(\mathcal L_1)_s
 +\sum_{i\ge2}i\bigl(us_{i+1}+2ss_i\bigr)(\mathcal L_1)_{s_i}
\right].
\end{equation}
Indeed, in the convolution $\sum_{j=1}^i s_js_{i+1-j}$ only the two terms with one factor $s_1$ can contribute to this homogeneous component when $i\ge2$.

Extract the coefficient of
\[
x^n u^m s^{2n-m-2}s_{m+1}
\]
from~\eqref{eq:L_onecycle_PDE}.  With the convention $A_{n,m}=0$ outside $1\le m\le n$, we obtain
\begin{equation}\label{eq:Anm_recurrence}
\frac{2n-m-1}{2n}A_{n,m}
=
\frac{mA_{n-1,m-1}+(2n+m-1)A_{n-1,m}}{2(n-1)}
\qquad(n\ge2).
\end{equation}
The initial value is $A_{1,1}=1$, from the leading term $\G_0=us_2x/2+\cdots$.

We now verify that~\eqref{eq:Anm_closed} satisfies the recurrence.  Using
\[
\binom{n-2}{m-2}=\frac{m-1}{n-1}\binom{n-1}{m-1},
\qquad
\binom{n-2}{m-1}=\frac{n-m}{n-1}\binom{n-1}{m-1},
\]
the right-hand side of~\eqref{eq:Anm_recurrence}, after substituting
$A_{n,m}=n^2\binom{n-1}{m-1}$, reduces to
\[
\frac12\left[m(m-1)+(2n+m-1)(n-m)\right]
\binom{n-1}{m-1}.
\]
Since
\[
m(m-1)+(2n+m-1)(n-m)=n(2n-m-1),
\]
this is exactly the left-hand side.  The initial condition therefore proves~\eqref{eq:Anm_closed}.  Finally, summing the binomial formula gives
\[
\sum_{n\ge1}\sum_{m=1}^n A_{n,m}z^{n-1}y^{m-1}
=
\sum_{N\ge0}(N+1)^2\bigl(z(1+y)\bigr)^N
=
\frac{1+z+zy}{(1-z-zy)^3},
\]
which proves~\eqref{eq:A135065_bivariate} and completes the proof.
\end{proof}

For $m=1$, Proposition~\ref{prop:A135065} gives $A_{n,(1)}=n^2$, so the square sequence is the one-nontrivial-cycle slice with a transposition.  For $m=2$, there are only two partitions.  The one-cycle contribution is
\[
A_{n,(2)}=n^2(n-1),
\]
while the full Catalan specialization gives
\[
2nB_2(n)=\frac{n^2(n-1)(5n+2)}6.
\]
Since the Catalan weights are $C_2=2$ and $C_1^2=1$, the complementary two-transposition contribution is
\begin{equation}\label{eq:A11_closed}
A_{n,(1,1)}=\frac56 n^2(n-1)(n-2).
\end{equation}
Thus the fixed-$m=2$ polynomial is completely resolved by cycle type.

\paragraph{A triangular recurrence in the number of nontrivial cycles.}
The $\kappa=0$ differential equation~\eqref{eqn:pde_kappa_0} gives a systematic way to continue this refinement.  For $q\ge0$, let $\mathcal L_q$ denote the part of $\G_0$ homogeneous of degree $q$ in $s_2,s_3,\ldots$, with $\mathcal L_0:=0$.  Thus
\[
\G_0=\sum_{q\ge1}\mathcal L_q.
\]
Collecting homogeneous degrees in the differentiated $\kappa=0$ equation gives
\begin{align}
(\mathcal L_q)_s
=x\Bigg[&s^2(\mathcal L_q)_s
+\sum_{i\ge2}i\bigl(us_{i+1}+2ss_i\bigr)(\mathcal L_q)_{s_i}
+us_2(\mathcal L_{q-1})_s \notag\\
&+\sum_{i\ge3}i\left(\sum_{j=2}^{i-1}s_js_{i+1-j}\right)(\mathcal L_{q-1})_{s_i}
\Bigg],
\label{eq:Lq_triangular}
\end{align}
where $s:=s_1$.  Thus the coefficients with $q$ nontrivial cycles satisfy linear recurrences whose inhomogeneous terms involve only the already determined $(q-1)$-cycle coefficients.  The exceptional M\"obius ladders provide the $s=0$ boundary terms when needed.

As an illustration, transposition length $m=3$ has the three cycle types $(3)$, $(2,1)$, and $(1,1,1)$.  Applying~\eqref{eq:Lq_triangular}, together with the one-cycle formula and~\eqref{eq:A11_closed}, gives the following closed forms.

\begin{corol}[All cycle types at transposition length three]\label{cor:m3_cycle_types}
For $n\ge3$,
\begin{align}
A_{n,(3)}
&=\frac12 n^2(n-1)(n-2),\label{eq:A3_closed}\\
A_{n,(2,1)}
&=\frac16 n^2(n-1)(n-2)(5n-13),\label{eq:A21_closed}\\
A_{n,(1,1,1)}
&=\frac1{180}n^2(n-1)(n-2)
\bigl(44n^2-243n+343\bigr).\label{eq:A111_closed}
\end{align}
\end{corol}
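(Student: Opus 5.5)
The plan is to use the triangular recurrence~\eqref{eq:Lq_triangular} one homogeneous degree $q$ at a time, exactly as the one-cycle formula was proved. Equation~\eqref{eq:A3_closed} is simply Proposition~\ref{prop:A135065} at $m=3$: $n^2\binom{n-1}{2}=\tfrac12 n^2(n-1)(n-2)$. For the other two types, Theorem~\ref{th:fixed_cycle_type} gives the polynomial ansatz in advance. We have $A_{n,\lambda}=n(n-1)(n-2)P_\lambda(n)$ with $\deg P_\lambda\le q$. The recurrence therefore only has to pin down finitely many coefficients, or we can verify a candidate closed form directly.

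For $\lambda=(2,1)$ with $q=2$, I would extract from~\eqref{eq:Lq_triangular} the coefficient of $x^n u^3 s^{2n-6}s_2s_3$. On the left this gives $\frac{2n-5}{2n}A_{n,(2,1)}$. On the right, the homogeneous $q=2$ terms contribute $A_{n-1,(2,1)}$ three ways. The $s^2(\mathcal L_2)_s$ term keeps $u^3$. The $2iss_i$ terms, for $i=2,3$, also keep $u^3$ and are evaluated on $s_2s_3$. The $ius_{i+1}$ terms need the $u^2$ coefficients of $q=2$ monomials that become $s_2s_3$ after one shift. These are $A_{n-1,(1,1)}$ via $s_2s_2\to s_2s_3$, which carries a multiplicity $2\cdot 2$, and $A_{n-1,(2,1)}$ itself does not occur because $u$ goes up. The inhomogeneous $q-1=1$ terms contribute $us_2(\mathcal L_1)_s$ at the monomial $u^2s^{2n-5}s_3$, which gives $A_{n-1,(2)}$, and the convolution term $i s_2 s_{i-1}$ at $i=3$, which splits $s_3$ into $s_2s_2$ and does not produce $s_2s_3$. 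I will recheck these index bookkeeping details carefully, since that is where errors creep in. The result is a first-order linear recurrence
\[
\tfrac{2n-5}{2n}A_{n,(2,1)}=\tfrac{1}{2(n-1)}\bigl(\alpha(n)A_{n-1,(2,1)}+\beta(n)A_{n-1,(1,1)}+\gamma(n)A_{n-1,(2)}\bigr),
\]
with explicit linear $\alpha,\beta,\gamma$. I would then substitute~\eqref{eq:A21_closed},~\eqref{eq:A11_closed}, and $A_{n,(2)}=n^2(n-1)$, cancel the common factor $n(n-1)(n-2)\cdots$, and check the resulting polynomial identity. The initial value $A_{3,(2,1)}=\frac16\cdot9\cdot2\cdot1\cdot2=6$ should be checked against the $s_2s_3u^3x^3$ coefficient $6\cdot\frac{x^3}{6}\cdot 6$, which equals $6$ after the $2n$ normalization.

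For $\lambda=(1,1,1)$ with $q=3$, the same procedure applies at the monomial $x^nu^3s^{2n-6}s_2^3$. The homogeneous part involves $A_{n-1,(1,1,1)}$ through the $s^2$ and $2ss_2$ terms. The inhomogeneous part comes from $us_2(\mathcal L_2)_s$ acting on the $u^2s_2^2$ coefficient, which is $A_{n-1,(1,1)}$. The $i\,s_js_{i+1-j}$ terms cannot produce $s_2^3$ from degree-two monomials at $u^3$ without raising $u$, except via $i=3$ with $s_2s_2$ from $s_3$, which would require a $(\mathcal L_2)$ monomial $s_2s_3$ at $u^3$, so $A_{n-1,(2,1)}$ enters. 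The Möbius-ladder boundary at $s=0$ is irrelevant here because $n\ge3$ forces positive $s$-degree $2n-6$, except at $n=3$, where $s_2^3u^3x^3$ has coefficient $1$. That value gives $A_{3,(1,1,1)}=1$, which matches $\frac{1}{180}\cdot9\cdot2\cdot1\cdot(396-729+343)=1$. Verifying the closed form is then a direct polynomial identity check.

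The main obstacle is the exact bookkeeping of multiplicities when differentiating monomials with repeated variables: the factors $i\cdot(\text{exponent})$, and the two symmetric convolution terms $j$ and $i+1-j$. I would guard against errors by first checking each recurrence numerically against the listed coefficients at $x^3$ and, where possible, $x^4$ before doing the symbolic verification.
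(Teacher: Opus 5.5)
\textbf{The gap: a missing term in your $(2,1)$ recurrence.} Your plan is the paper's own proof. You get \eqref{eq:A3_closed} from Proposition~\ref{prop:A135065}, extract first-order recurrences for $A_{n,(2,1)}$ and $A_{n,(1,1,1)}$ from \eqref{eq:Lq_triangular}, seed them with $A_{3,(2,1)}=6$ and the $M_{12}$ boundary value $A_{3,(1,1,1)}=1$, and verify by substitution. The problem is in the $(2,1)$ bookkeeping, exactly where you expected errors.

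You read the convolution $\sum_{j=2}^{i-1}s_js_{i+1-j}$ as a single term $s_2s_{i-1}$ and examined only $i=3$. The $i=4$ term is
\[
4(s_2s_3+s_3s_2)\,\partial_{s_4}=8s_2s_3\,\partial_{s_4}.
\]
Applied to the one-cycle monomial $x^{n-1}u^3s^{2n-6}s_4$ of $\mathcal L_1$, whose coefficient is $A_{n-1,(3)}/(2(n-1))$, it lands exactly on $x^nu^3s^{2n-6}s_2s_3$. Combinatorially, this is the Case~II splitting of a black-gray $4$-cycle into a $2$-cycle and a $3$-cycle. The correct recurrence is
\[
\frac{2n-5}{2n}A_{n,(2,1)}=\frac{(2n+3)A_{n-1,(2,1)}+(2n-5)A_{n-1,(2)}+4A_{n-1,(1,1)}+8A_{n-1,(3)}}{2(n-1)}.
\]
Your $\alpha=2n+3$, $\beta=4$, $\gamma=2n-5$ are correct, but the input $8A_{n-1,(3)}$ is missing.

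\textbf{Why the verification step fails as written.} Take $n=4$ in your recurrence. It gives $\tfrac38A_{4,(2,1)}=(66+54+60)/6=30$, so $A_{4,(2,1)}=80$. Formula \eqref{eq:A21_closed} gives $112$. The missing $8A_{3,(3)}=72$ accounts exactly for the difference, since $(180+72)/6=42=\tfrac38\cdot112$.

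Your proposed safeguard would not catch this. At $n=3$ the missing term is $8A_{2,(3)}=0$, so the check against the listed $x^3$ coefficients passes. The first discrepancy appears at $n=4$, which lies beyond the displayed expansion of $\G_0$.

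\textbf{The $(1,1,1)$ case.} Your analysis identifies the right three inputs. Their multiplicities are $2n+4$ (from $s^2(\mathcal L_3)_s$ plus $2s\sum_i is_i\partial_{s_i}$), $2n-6$ (from $us_2(\mathcal L_2)_s$), and $3$ (from $3s_2^2\partial_{s_3}$ acting on the $(2,1)$ monomial).

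One index slip: the monomial to extract from the differentiated equation is $x^nu^3s^{2n-7}s_2^3$, not $s^{2n-6}$. The exponent $2n-6$ belongs to $\mathcal L_3$ itself. With that correction, the left-hand factor is $(2n-6)/(2n)$. It vanishes at $n=3$, which is why the M\"obius-ladder value $A_{3,(1,1,1)}=1$ must be supplied separately, as you correctly observed.
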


\begin{proof}
The first identity is Proposition~\ref{prop:A135065} with $m=3$.  Set
\[
Q_n:=A_{n,(2,1)},\qquad R_n:=A_{n,(1,1,1)}.
\]
Extracting respectively the coefficients of
\[
x^nu^3s^{2n-6}s_2s_3
\qquad\text{and}\qquad
x^nu^3s^{2n-7}s_2^3
\]
from~\eqref{eq:Lq_triangular} yields
\begin{align}
\frac{2n-5}{2n}Q_n
&=\frac{(2n+3)Q_{n-1}+(2n-5)A_{n-1,(2)}
+4A_{n-1,(1,1)}+8A_{n-1,(3)}}{2(n-1)},
\label{eq:Qn_rec}\\
\frac{2n-6}{2n}R_n
&=\frac{(2n+4)R_{n-1}+(2n-6)A_{n-1,(1,1)}+3Q_{n-1}}{2(n-1)}.
\label{eq:Rn_rec}
\end{align}
The first recurrence starts with $Q_3=6$.  For the second, $R_3=1$ is the $M_{12}$ M\"obius-ladder boundary term, and~\eqref{eq:Rn_rec} applies for $n\ge4$.  Substitution of Proposition~\ref{prop:A135065}, equation~\eqref{eq:A11_closed}, and the expressions in~\eqref{eq:A21_closed}--\eqref{eq:A111_closed} verifies the two recurrences identically, proving the result by induction.
\end{proof}

For example, at $n=3$ the three coefficients are
\[
A_{3,(1,1,1)}=1,\qquad A_{3,(2,1)}=6,\qquad A_{3,(3)}=9.
\]
After Catalan specialization their weighted sum is
\[
1\cdot C_1^3+6\cdot C_1C_2+9\cdot C_3
=1+12+45=58,
\]
which is the $n=3$ diagonal coefficient appearing in Subsection~\ref{sec:trees}.  More generally, Theorem~\ref{th:fixed_cycle_type} and the triangular recurrence~\eqref{eq:Lq_triangular} reduce every coefficient at fixed transposition length to a finite collection of polynomial cycle-type contributions.

\section{Application to random Gaussian-state entanglement}\label{sec:physics_application}

We now return to the second motivation mentioned in the Introduction. This section contains no new breakpoint-graph proofs: its purpose is to give a dictionary between the mathematical generating functions above and a large-system expansion for entanglement of Haar-random Gaussian states, and then to record the coefficient-level physical consequences of Propositions~\ref{prop:G1_signed_Catalan} and~\ref{prop:Catalan_surface_closed_form}. The underlying Page-curve problem and its moment expansion are developed in~\cite{Iosue2023,Youm2025}.

\subsection{Physical setup and the breakpoint-graph dictionary}

Consider $N$ bosonic modes, initially equally squeezed with squeezing parameter $s>0$, followed by a Haar-random passive linear-optical transformation $U\in U(N)$. Let a subsystem contain $k=\lfloor rN\rfloor$ modes, with $0\leq r\leq1$, and let $S_2(U;s,r)$ denote its R\'enyi-$2$ entanglement entropy. A convenient expansion~\cite{Iosue2023} is
\begin{equation}\label{eq:physics_S2_series}
S_2(U;s,r)
=
\sum_{\ell\geq1}\frac{\vartheta^\ell}{2\ell}
\left(k-\operatorname{Tr}W^\ell\right),
\qquad
\vartheta:=\tanh^2(2s),
\end{equation}
where $W$ is the positive contraction obtained from the subsystem block of $UU^T$, namely $W=\Pi U U^T \Pi \bar{U} \bar{U}^T \Pi$, with $\Pi$ the projection onto the chosen $k$-mode subsystem. For fixed $\ell$, its expected moment has an expansion
\begin{equation}\label{eq:physics_moment_expansion}
\mathbb E\operatorname{Tr}W^\ell
=Nf_\ell(r)+H_\ell(r)+o(1)
\qquad(N\to\infty).
\end{equation}
The leading and constant terms are polynomials in the subsystem fraction $r$. Let $a_1^{(\ell)}$ be the coefficient of $r^{2\ell}$ in $f_\ell(r)$ and $a_0^{(\ell)}$ the coefficient of $r^{2\ell}$ in $H_\ell(r)$.

The unitary-Weingarten expansion of these extremal coefficients has the form
\begin{equation}\label{eq:physics_weingarten_sum}
a_\kappa^{(\ell)}
=
\sum_{\substack{\pi\in S_{2\ell}:\\
\xi^{(\ell)}(\pi)-|\pi|=\kappa}}
(-1)^{|\pi|}
\prod_{c\in\operatorname{cyc}(\pi)}C_{|c|-1},
\qquad \kappa\in\{0,1\},
\end{equation}
where $|\pi|:=2\ell-\#(\pi)$ is the transposition length and $\xi^{(\ell)}(\pi)$ is the second free-index-loop count in the cyclic contraction. Under the special-breakpoint-graph correspondence, $\#(\pi)$ and $\xi^{(\ell)}(\pi)$ are precisely the black-gray and gray-red cycle counts. Thus~\eqref{eq:physics_weingarten_sum} is exactly the permutation-length form $\kappa=\xi-|\pi|$ of the graph parameter introduced in Section~\ref{sec:breakpoint}.

The relation with our generating functions is particularly direct. Since setting $u=-1$ contributes the sign $(-1)^{\xi(\pi)}$ while the physical sum uses $(-1)^{|\pi|}$, the stratum identity $\xi(\pi)-|\pi|=\kappa$ gives
\begin{equation}\label{eq:physics_graph_interface}
2\ell\,\coef{x^\ell}\,
\G_\kappa(x,-1,C_0,C_1,C_2,\ldots)
=(-1)^\kappa a_\kappa^{(\ell)}.
\end{equation}
Thus the two orders selected by the physical expansion have an intrinsic topological meaning:
\[
\kappa=1\quad\Longleftrightarrow\quad \Sigma_G\cong S^2,
\qquad
\kappa=0\quad\Longleftrightarrow\quad \Sigma_G\cong\mathbb{RP}^2.
\]
In this sense the leading and constant extremal coefficients probe respectively the spherical and projective-plane special-breakpoint-graph strata.

\subsection{\texorpdfstring{R\'enyi-$2$}{Renyi-2} moment polynomials and the constant correction}

Combining~\eqref{eq:physics_graph_interface} with the signed Catalan evaluations proved in Sections~\ref{sec:kappa=1} and~\ref{sec:kappa=0} immediately gives
\begin{equation}\label{eq:physics_a10_from_graphs}
a_1^{(\ell)}=(-1)^{\ell-1}C_{\ell-1},
\qquad
a_0^{(\ell)}=(-1)^\ell4^{\ell-1}.
\end{equation}
The first identity is Proposition~\ref{prop:G1_signed_Catalan}; the second is Proposition~\ref{prop:Catalan_surface_closed_form}. No additional graph enumeration is needed in this section.

The subsystem symmetry $r\leftrightarrow1-r$ and the polynomial support of the moment coefficients~\cite{Iosue2023} then turn the two extremal coefficients into the full leading and constant moment terms. Writing
\begin{equation}\label{eq:physics_R_def}
R:=r(1-r),
\end{equation}
one obtains
\begin{equation}\label{eq:physics_Gell_Hell}
G_\ell(r):=r-f_\ell(r)
=\sum_{j=1}^{\ell}C_{j-1}R^j,
\qquad
H_\ell(r)=4^{\ell-1}R^\ell.
\end{equation}
The first formula is the Catalan polynomial determined by the spherical coefficient, while the second shows that the projective-plane evaluation fixes the entire constant moment correction.

Taking expectations in~\eqref{eq:physics_S2_series} therefore gives
\begin{equation}\label{eq:physics_entropy_asymptotic}
\mathbb E S_2(U;s,r)
=N\beta_2(s,r)-\lambda_2(s,r)+o(1),
\end{equation}
with the coefficient-level representations
\begin{equation}\label{eq:physics_beta2_lambda2_series}
\beta_2(s,r)
=\sum_{\ell\geq1}\frac{\vartheta^\ell}{2\ell}G_\ell(r),
\qquad
\lambda_2(s,r)
=\sum_{\ell\geq1}\frac{\vartheta^\ell}{2\ell}H_\ell(r),
\qquad
\vartheta=\tanh^2(2s).
\end{equation}
The first series is the R\'enyi-$2$ Page-curve expansion of~\cite{Iosue2023}, now with its full moment polynomials recovered from the spherical special-breakpoint-graph evaluation. Its closed-form summation is carried out in recent related work~\cite{AltPageCurves}; since that final resummation introduces no further breakpoint-graph input, we do not reproduce it here.

For the constant term, the projective-plane formula in~\eqref{eq:physics_Gell_Hell} resums immediately:
\begin{equation}\label{eq:physics_lambda2_closed}
\lambda_2(s,r)
=-\frac18\log\!\left(
1-4r(1-r)\tanh^2(2s)
\right).
\end{equation}
Thus the breakpoint-graph calculation determines the complete $O(N)$ and $O(1)$ moment data; in the $O(1)$ case, the passage from the projective-plane coefficients to the entropy correction is the elementary logarithmic generating-function sum above.

\subsection{Integer R\'enyi index}

The reduction for integer R\'enyi index $\alpha\geq2$ in~\cite{Youm2025} expresses the leading and constant finite-size terms through the same sequences $G_\ell(r)$ and $H_\ell(r)$ in~\eqref{eq:physics_Gell_Hell}. Consequently, the spherical and projective-plane graph evaluations supply the corresponding moment input for every integer $\alpha\geq2$ without any further breakpoint-graph calculation. Writing
\begin{equation}\label{eq:physics_Renyi_alpha_asymptotic}
\mathbb E S_\alpha(U;s,r)
=N\beta_\alpha(s,r)-\lambda_\alpha(s,r)+o(1),
\end{equation}
recent related work~\cite{AltPageCurves} performs the remaining Gaussian-state resummations and gives closed forms for both $\beta_\alpha(s,r)$ and $\lambda_\alpha(s,r)$. We do not reproduce those formulas here: the role of the present paper is to supply, through the two topological breakpoint-graph strata, the combinatorial moment coefficients that enter that reduction. The R\'enyi-$2$ constant~\eqref{eq:physics_lambda2_closed} is retained above because it is an immediate one-line resummation of the projective-plane sequence $H_\ell$.

\subsection{Relation to recent Page-curve derivations}\label{sec:physics_alternative}

Recent related work~\cite{AltPageCurves} takes the same entanglement problem from the moment expansion to closed physical formulas. For the R\'enyi-$2$ Page curve it gives two complementary derivations: one uses the Jacobi $\beta=1$ ensemble associated with the singular values of the subsystem block of $UU^T$ and extracts the extensive and constant terms from its resolvent, while the other starts directly from the infinite Page-curve series of~\cite{Iosue2023} and sums it without random-matrix resolvents. The same work also gives closed leading and constant terms for every integer R\'enyi index $\alpha\geq2$.

The comparison with the present approach is clearest at the level of the moment coefficients in~\eqref{eq:physics_Gell_Hell}. The direct-series derivation of~\cite{AltPageCurves} encounters the same Catalan polynomial $G_\ell(r)$ that here follows from the spherical $\kappa=1$ evaluation together with the symmetry and support properties of~\cite{Iosue2023}. In its spectral derivation, the constant resolvent term yields the same sequence $H_\ell(r)$ that here comes from the signed $\kappa=0$ Catalan evaluation in the projective-plane class. Thus the distinctive contribution of the breakpoint-graph route is the combinatorial and topological origin of these moment coefficients, while the final entropy resummations may be carried out by the methods of~\cite{AltPageCurves}. That work also treats Gaussian hafnian moments and anticoncentration, as well as unequal input squeezing; those questions are outside the scope of the present paper.

\section{Conclusion and outlook}\label{sec:extensions}
We developed a self-contained combinatorial treatment of the two largest strata of special breakpoint graphs. The canonical surface carried by a graph has Euler characteristic $\chi=\kappa+1$, so $\kappa=1$ is spherical and $\kappa=0$ is carried by the projective plane. This topology is reflected directly in the recurrences: noncrossing spherical graphs decompose into three smaller graphs, whereas projective-plane graphs are reduced by removing a rooted black-gray $1$-cycle, with odd M\"obius ladders providing the boundary family.

The resulting multivariate equations admit unexpectedly small reductions. The $\kappa=1$ shift field is conjugate to translation of the cycle-weight generating series and gives a canonical one-parameter reduction through every target sequence; on the signed Catalan orbit this yields the explicit coefficient formula~\eqref{eq:G1_signed_Catalan_coeff}. For $\kappa=0$, the relevant derivations form $\mathfrak{sl}_2$, so every target lies on an orbit of dimension at most three, with a stabilizer forcing a further dimension drop. At $u=-1$ the Catalan point is fixed by $V$; its two-dimensional orbit is projective/Riccati and the restricted generating function has the closed form~\eqref{eq:Catalan_surface_closed_form}.

A second finiteness phenomenon occurs coefficientwise in the $\kappa=0$ class. Fixing the number of gray-red cycles bounds the permutation support and forces polynomiality in $n$, while fixing the nontrivial cycle type sharpens the degree bound and produces a triangular recurrence by the number of nontrivial cycles. The one-cycle family has the explicit binomial form~\eqref{eq:Anm_closed}, and the diagonal Catalan specialization recovers the total node-depth generating function for noncrossing trees.

The final application section translates two of the intrinsic Catalan evaluations back to the random Gaussian-state problem that helped motivate them. The unitary-Weingarten expansion selects exactly the same $\kappa=1$ and $\kappa=0$ sums, so the spherical and projective-plane formulas give the full leading and constant moment polynomials entering the R\'enyi-$2$ Page-curve expansion. The projective-plane polynomial resums immediately to the logarithmic constant correction, while the same spherical and projective-plane moment data feed the existing reduction for every integer R\'enyi index. Closed entropy resummations beyond this immediate constant term are left to the related physics treatment~\cite{AltPageCurves}. Thus the physical application uses the combinatorial theory as an input rather than altering its mathematical organization.

Several directions suggested by these results remain open.
The two topological strata studied here already exhibit two rather different mechanisms for reducing an infinite family of cycle variables. For $\kappa=1$, Proposition~\ref{prop:G1_shift_flow} shows that every target weight sequence lies on a canonical one-parameter shift orbit, while Proposition~\ref{prop:G1_signed_Catalan} shows that the Catalan orbit becomes explicitly solvable at $u=-1$. A natural next question within the spherical class is to characterize other initial series $f$ for which the reduced equation~\eqref{eq:G1_reduced_generic} has an algebraic, D-finite, or otherwise explicit solution at the target point.

For $\kappa=0$, the corresponding reduction problem is controlled by the $\mathfrak{sl}_2$ action of Theorem~\ref{th:sl2}. Equation~\eqref{eq:stabilizer_ODE} reduces the search for two-dimensional orbits to a first-order differential equation for one weight generating series. The Catalan fixed point is the simplest nontrivial example, and the projective coordinates of Proposition~\ref{prop:Catalan_projective_coordinates} make its orbit completely explicit. It would be useful to classify other formal solutions of the stabilizer equation, and more generally finite-dimensional families tangent to the relevant vector fields, for which the restricted PDE can be solved in closed form.

The preceding results suggest a practical reduction strategy for other multivariate graph equations.  First isolate the derivations through which the cycle variables enter.  Next compute their Lie closure.  A finite-dimensional Lie closure produces canonical finite-dimensional formal orbits on which the PDE closes.  This is closely analogous to the Lie--Scheffers viewpoint, where a differential system whose time-dependent vector field takes values in a finite-dimensional Lie algebra is controlled by the corresponding finite-dimensional group action~\cite{CarinenaDeLucas2011}.  Here the ambient space is the infinite space of cycle weights, and the role of the group orbit is played by a finite-dimensional formal invariant family.  If the Lie algebra is larger, one may instead search for a finite-dimensional family $Q(z;\theta_1,\ldots,\theta_d)$ satisfying tangency relations
\begin{equation}\label{eq:tangency_general}
X_aQ=\sum_{b=1}^d B_{ab}(\theta)\frac{\partial Q}{\partial\theta_b}
\end{equation}
for every derivation $X_a$ occurring in the original equation.  Equations such as~\eqref{eq:stabilizer_ODE} are the first nontrivial instance of this principle.  This viewpoint separates two questions that are easy to conflate: whether a finite-dimensional reduction exists, and whether the resulting finite-dimensional equation can be solved in closed form.

The coefficient formulas in Subsections~\ref{sec:polynomiality} and~\ref{sec:cycle_type_refinement} suggest a complementary finite problem. Theorem~\ref{th:fixed_cycle_type} shows that for a partition $\lambda\vdash m$ with $q$ nontrivial cycles, the contribution $A_{n,\lambda}$ is a falling factorial times a polynomial of degree at most $q$. Proposition~\ref{prop:A135065} solves the one-cycle case, and Corollary~\ref{cor:m3_cycle_types} resolves all cycle types at transposition length three. Determining exact degrees, leading coefficients, and systematic factorizations of $P_\lambda(n)$ for general $\lambda$ is a concrete problem entirely within the projective-plane class. Direct bijections explaining the simple one-cycle formula would be especially valuable.

Finally, Corollary~\ref{cor:noncrossing_trees} identifies the Catalan-weighted diagonal of the $\kappa=0$ series with the classical total path-length statistic for noncrossing trees studied by Deutsch and Noy~\cite{DeutschNoy2002}. The proof here is analytic, through the shift-flow PDE and Lagrange inversion. A direct correspondence, ideally refined by the black-gray cycle structure, remains a natural combinatorial problem and could reveal why this established tree statistic appears in both the invariant-orbit and diagonal reductions.

The physical application also suggests a topological direction for future work. The leading and constant terms select the spherical and projective-plane strata, and lower orders of the same large-system expansion should naturally involve strata with $\kappa<0$. A detailed unrestricted treatment now in preparation~\cite{UnrestrictedBreakpointGraphs} develops an exact character expansion together with local deletion kernels for all four deletion cases. After repairing the black-red Hamiltonian cycle following nonadjacent deletion, both nonadjacent contributions close through an exchange-symmetric two-point series that tracks the two opened paths; the finite pairing kernels can be organized in Brauer-diagram language~\cite{Brauer1937}. Thus local closure itself is no longer the obstruction to extracting lower fixed-$\kappa$ strata.

At uniform cycle weight, that unrestricted treatment now also resolves the two ingredients that initially obstructed the first lower stratum. The symmetric length-marked statistic $E_+^{(y)}$ admits a finite two-distinguished-point character formula, while orientability is detected directly by parity preservation of the encoding permutation. Consequently the stratum $\kappa=-1$, for which $\chi=\kappa+1=0$, separates explicitly into torus and Klein-bottle sectors. The remaining problem is therefore no longer to construct the missing marked states or detect orientability, but to simplify the resulting finite formulas and extract effective closed recurrences. This provides the natural next step toward usable fixed-$\kappa$ enumeration below the projective-plane stratum.

\section*{Acknowledgements}

The authors acknowledge the use of OpenAI's ChatGPT as an assistive tool in the preparation of this manuscript, including literature discovery, exploratory derivations, symbolic and computational checks, organization of the exposition, and language and LaTeX editing. All mathematical statements, proofs, citations, assessments of prior work, and final wording were reviewed by the authors, who assume full responsibility for the content of the manuscript. J.T.I.~and A.V.G.\ acknowledge support from the U.S.~Department of Energy, Office of Science, Accelerated Research in Quantum Computing, Fundamental Algorithmic Research toward Quantum Utility (FAR-Qu). J.T.I.~and A.V.G.\ were also supported in part by ARL (W911NF-24-2-0107), ONR MURI, NSF QLCI (award No.~OMA-2120757), NQVL:QSTD:Design:FTL, DoE ASCR Quantum Testbed Pathfinder program (award No.~DE-SC0024220), NSF STAQ program, and AFOSR MURI. J.T.I.~and A.V.G.\ also acknowledge support from the U.S.~Department of Energy, Office of Science, National Quantum Information Science Research Centers, Quantum Systems Accelerator (award No.~DE-SCL0000121). J.T.I.'s contributions were made during his time at UMD.


\begin{thebibliography}{10}

\bibitem{Alexeev2017hultman}
N.~Alexeev, A.~Pologova, and M.~A.~Alekseyev,
\newblock Generalized Hultman numbers and cycle structures of breakpoint graphs,
\newblock \emph{Journal of Computational Biology} 24 (2017), no.~2, 93--105. \doi{10.1089/cmb.2016.0190}.

\bibitem{Alekseyev2007}
M.~A.~Alekseyev and P.~A.~Pevzner,
\newblock Colored de Bruijn graphs and Genome Halving Problem,
\newblock \emph{IEEE/ACM Transactions on Computational Biology and Bioinformatics} 4 (2007), no.~1, 98--107. \doi{10.1109/TCBB.2007.1002}.

\bibitem{Alexeev2016top}
N.~Alexeev, P.~Avdeyev, and M.~A.~Alekseyev,
\newblock Comparative genomics meets topology: a novel view on genome median and halving problems,
\newblock \emph{BMC Bioinformatics} 17 (2016), Suppl.~14, 213--223. \doi{10.1186/s12859-016-1263-7}.

\bibitem{OEIS-A062236}
OEIS Foundation Inc.,
\newblock The On-Line Encyclopedia of Integer Sequences, sequence A062236.

\bibitem{OEIS-A135065}
OEIS Foundation Inc.,
\newblock The On-Line Encyclopedia of Integer Sequences, sequence A135065.

\bibitem{Noy1998}
M.~Noy,
\newblock Enumeration of noncrossing trees on a circle,
\newblock \emph{Discrete Mathematics} 180 (1998), no.~1--3, 301--313. \doi{10.1016/S0012-365X(97)00121-0}.

\bibitem{DeutschNoy2002}
E.~Deutsch and M.~Noy,
\newblock Statistics on non-crossing trees,
\newblock \emph{Discrete Mathematics} 254 (2002), no.~1--3, 75--87. \doi{10.1016/S0012-365X(01)00366-1}.

\bibitem{Ore1933}
{\O}.~Ore,
\newblock Theory of non-commutative polynomials,
\newblock \emph{Annals of Mathematics} 34 (1933), no.~3, 480--508. \doi{10.2307/1968173}.

\bibitem{ChyzakSalvy1998}
F.~Chyzak and B.~Salvy,
\newblock Non-commutative elimination in Ore algebras proves multivariate identities,
\newblock \emph{Journal of Symbolic Computation} 26 (1998), no.~2, 187--227. \doi{10.1006/jsco.1998.0207}.

\bibitem{Stanley1980}
R.~P.~Stanley,
\newblock Differentiably finite power series,
\newblock \emph{European Journal of Combinatorics} 1 (1980), no.~2, 175--188. \doi{10.1016/S0195-6698(80)80051-5}.

\bibitem{CarinenaDeLucas2011}
J.~F.~Cari\~nena and J.~de Lucas,
\newblock Lie systems: theory, generalisations, and applications,
\newblock \emph{Dissertationes Mathematicae} 479 (2011), 1--162. \doi{10.4064/dm479-0-1}.

\bibitem{Brauer1937}
R.~Brauer,
\newblock On algebras which are connected with the semisimple continuous groups,
\newblock \emph{Annals of Mathematics} 38 (1937), no.~4, 857--872. \doi{10.2307/1968843}.

\bibitem{Iosue2023}
J.~T.~Iosue, A.~Ehrenberg, D.~Hangleiter, A.~Deshpande, and A.~V.~Gorshkov,
\newblock Page curves and typical entanglement in linear optics,
\newblock \emph{Quantum} 7 (2023), 1017. \doi{10.22331/q-2023-05-23-1017}.

\bibitem{Youm2025}
J.~Youm, J.~T.~Iosue, A.~Ehrenberg, Y.-X.~Wang, and A.~V.~Gorshkov,
\newblock Average R\'enyi entanglement entropy in Gaussian boson sampling,
\newblock \emph{Physical Review Research} 7 (2025), 023125. \doi{10.1103/PhysRevResearch.7.023125}.

\bibitem{AltPageCurves}
L.~Shou, A.~Ehrenberg, Y.-X.~Wang, J.~T.~Iosue, and A.~V.~Gorshkov, \emph{Anticoncentration and entanglement in Gaussian boson sampling},
\newblock manuscript, 2026.

\bibitem{UnrestrictedBreakpointGraphs}
M.~A.~Alekseyev,
\newblock Unrestricted cycle enumeration and two-point harmonic analysis of special breakpoint graphs,
\newblock in preparation, 2026.

\end{thebibliography}
\end{document}